\def\cE{\mathcal{E}}
\newcommand{\beqa}{\begin{eqnarray}}
\newcommand{\eeqa}{\end{eqnarray}}
\def\cS{\mathcal{S}}
\def\Fsp{F_{s}^{(p)}}
\def\wC{\widehat{C}}
\def\Vexc{V_{\mathrm{exc}}}
\theoremstyle{plain}
\newtheorem{lem}{Lemma}[section]
\newtheorem{coro}{Corollary}[section]
\newtheorem{claim}{Claim}[section]
\newtheorem{prop}{Proposition}[section]
\theoremstyle{definition}
\theoremstyle{remark}
\newtheorem{rk}{Remark}[section]
\title{Asymptotic expansion of the multi-orientable random tensor model}
\author{\'Eric Fusy and Adrian Tanasa}
\date{\today}
\begin{document}

\begin{abstract}
Three-dimensional random tensor models are a natural generalization 
of the celebrated matrix models. 
The associated tensor graphs, or 3D maps, can be classified with respect to a particular integer or half-integer, the degree of the respective graph.
In this paper we analyze the general term of the asymptotic expansion in $N$, the size of the tensor, 
of a particular random tensor model, the multi-orientable tensor model.
We perform their enumeration and we establish which are the dominant configurations of a given degree. 
\end{abstract}

\maketitle

\smallskip
{\small \noindent \textbf{Keywords.} 3D maps, 4-regular maps, Eulerian orientations, schemes}

\section{Introduction and motivation}

Random tensor models (see \cite{rev-riv} for a recent review) generalize in dimension three (and higher) the celebrated matrix models (see, for example, \cite{df} for a review). Indeed, in the same way matrix models are related to combinatorial maps~\cite{zvonkine}, tensor models in dimension three are related to tensor graphs or 3D maps. 
A certain case of random tensor models, the so-called colored tensor models, have been intensively studied in the recent years (see \cite{GR} for a review). 
The graphs associated to these models are regular edge-colored graphs.
An important result is the 
asymptotic expansion in the 
limit $N\to\infty$ ($N$ being the size of the tensor), an expansion which was obtained in \cite{largeN}. The role played by the genus in the 2D case is played here by a distinct integer called the {\it degree}. The dominant graphs of this asymptotic expansion are the so-called melonic graphs, which correspond to particular triangulations of the three-dimensional sphere ${\cS}^3$. Let us also mention here that a universality result generalizing
 matrix universality was obtained in the tensor case in \cite{univ}.

A particularly interesting combinatorial approach for the study of these colored graphs was proposed recently by Gur\u{a}u and Schaeffer in \cite{GS}, where they  analyze in detail the structure of colored graphs of fixed degree and perform exact and asymptotic enumeration. This analysis relies on the reduction of colored graphs to some terminal forms, called {\it schemes}. An important result proven in \cite{GS} is that the number of schemes of a given degree is finite (while the number of graphs of a given degree is infinite). 


Nevertheless, a certain drawback of colored tensor models is that a large number of tensor graphs is discarded by the very definition of the model. Thus, a different type of model was initially proposed in \cite{mo}, the 3D multi-orientable (MO) tensor model. This model is related to tensor graphs which correspond to 3D maps with a particular Eulerian orientation.  
The set of MO tensor graphs contains as a strict subset the set of colored tensor graphs (in 3D). 
The  asymptotic expansion in the limit $N\to\infty$ for the MO tensor model was studied in \cite{DRT}, where it was shown that the same class of tensor graphs, the melonic ones, are the dominant graphs in this limit. The sub-dominant term of this expansion was then studied in detail in \cite{RT}.


In this paper we implement a Gur\u{a}u-Schaeffer analysis for the MO random tensor model. We 
investigate in detail the general term of the asymptotic expansion in the limit $N\to\infty$. As in the colored case, this is done by defining appropriate terminal forms, the schemes. Nevertheless, our analysis is somehow more involved from a combinatorial point of view, since, as already mentioned above, a larger class of 3D maps has to be taken into consideration. 
Also an important difference with respect to the colored model, which only allows for integer degrees, is that the MO model allows for both half-odd-integer and integer degrees. This leads to the the fact that the dominant schemes are different from the ones identified in \cite{GS} for the colored model (interestingly, in both cases, dominant schemes are naturally associated to rooted binary trees).

Let us also mention that the analysis of this paper may further allow for the implementation of the the so-called {\it double scaling limit} for the MO tensor model. This is a particularly important mechanism for matrix models (see again \cite{df}), making it possible to take, in a correlated way, the double limit $N\to\infty$ and $z\to z_c$ where $z$ is a variable counting the number of vertices of the graph and $z_c$ is some critical point of the generating function of schemes of a given degree.

\section{Preliminaries}

In this section we recall the main definitions related to MO tensor graphs.

\subsection{Multi-orientable tensor graphs}
A \emph{map} (also called a fat-graph) is a graph (possibly with loops and multiple edges, possibly disconnected) such that at each vertex $v$ 
the cyclic order of the $d$ incident half-edges ($d$ being the degree of $v$) in clockwise (cw) order around $v$ is specified.
A \emph{corner} of a map is defined as a sector between two consecutive half-edges around a vertex,
so that a vertex of degree $d$ has $d$ incident corners.   
A \emph{4-regular map} is a map where all vertices have degree $4$. A \emph{multi-orientable tensor graph}, shortly called MO-graph hereafter, is a 4-regular map where each half-edge carries a sign, $+$ or $-$, such that each edge has its two half-edges of opposite signs, and the two half-edges at each corner also have opposite signs. In addition, for convenience, the half-edges at each vertex
are turned into 3 parallel strands, see Figure~\ref{fig:exemple_MO_graph} for an example. 

The strand in the middle is called \emph{internal}, the two other ones are called \emph{external}.
An external strand is called \emph{left} if it is on the left side of a positive half-edge 
or on the right side of a negative half-edge; an external strand is called \emph{right} if it is on the right side of a positive half-edge 
or on the left side of a negative half-edge. 
A \emph{face} of an MO-graph is a closed walk formed by a closed (cyclic) sequence of strands. 
External faces (faces formed by external strands) are the classical faces of the 4-regular map, while \emph{internal faces} (faces formed by internal strands), also called \emph{straight faces} thereafter,  are not faces of the 4-regular map.  Note also that external faces are either made completely of 
left external strands, or are made completely of right external strands; accordingly external faces
are called either left or right. 
We finally define a \emph{rooted MO-graph} as a connected MO-graph with a marked edge, which is convenient  (as in the combinatorial study of maps) to avoid symmetry issues (indeed, as 
for maps, MO graphs are \emph{unlabelled}).

\subsection{The degree of an MO-graph}\label{sec:degree}
The \emph{degree} of an MO-graph $G$ is the quantity $\delta\in\tfrac12\mathbb{Z}_+$ defined by
\begin{equation}
2\delta=6c+3V-2F,
\end{equation}
where $c,V,F$ are respectively the numbers of connected components, vertices, and faces (including internal faces) of $G$. For $G_1$ and $G_2$ two MO-graphs, denote by $G_1+G_2$ the MO-graph
made of disconnected copies of $G_1$ and $G_2$. Note that the quantities $c,V,F$ for $G$
result from adding the respective quantities from $G_1$ and $G_2$. Hence we have:
\begin{claim}\label{claim:disconnected}
For two MO-graphs $G_1$ and $G_2$, the degree of $G_1+G_2$ is the sum of the degrees of $G_1$ and $G_2$. In particular, the degree of an MO-graph is the sum of the degrees of its connected components. 
\end{claim}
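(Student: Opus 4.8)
The plan is to exploit the fact that the defining relation $2\delta = 6c+3V-2F$ is a linear combination of the three counting parameters $c$, $V$, and $F$, each of which is additive under disjoint union. Concretely, I would first record that for $G = G_1+G_2$ one has $c(G)=c(G_1)+c(G_2)$ and $V(G)=V(G_1)+V(G_2)$, which are immediate since forming the disjoint union neither merges components nor creates or destroys vertices. The only point deserving a word of justification is the additivity of the face count: a face is a closed cyclic sequence of strands threading through the edges and vertices of $G$, and since no edge of $G_1+G_2$ joins a vertex of $G_1$ to a vertex of $G_2$, such a closed walk cannot pass from one component to the other. Hence every face of $G$ lies entirely within $G_1$ or within $G_2$, giving $F(G)=F(G_1)+F(G_2)$; this holds for internal and external faces alike, since the argument is insensitive to the type of strand.

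With these three additivities in hand the computation is purely formal: substituting into the definition yields
\begin{equation*}
2\delta(G) = 6c(G)+3V(G)-2F(G) = \bigl(6c(G_1)+3V(G_1)-2F(G_1)\bigr)+\bigl(6c(G_2)+3V(G_2)-2F(G_2)\bigr) = 2\delta(G_1)+2\delta(G_2),
\end{equation*}
and dividing by $2$ gives $\delta(G)=\delta(G_1)+\delta(G_2)$, which is the first assertion.

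For the second assertion I would argue by induction on the number $k$ of connected components. If $C_1,\dots,C_k$ are the components of an MO-graph $G$, I write $G = C_1 + (C_2+\cdots+C_k)$; the two-component case just proved gives $\delta(G)=\delta(C_1)+\delta(C_2+\cdots+C_k)$, and applying the inductive hypothesis to the $(k-1)$-component graph $C_2+\cdots+C_k$ completes the induction (the base case $k=1$ being trivial). I do not anticipate any genuine obstacle here: the entire content is the additivity of $c$, $V$, $F$, and the only step that is not purely definitional is the observation that faces respect the decomposition into components, which is itself immediate from the definition of a face as a closed walk of strands.
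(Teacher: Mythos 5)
Your proposal is correct and follows exactly the paper's argument: the paper likewise derives the claim from the additivity of $c$, $V$, and $F$ under disjoint union, combined with the linearity of the defining relation $2\delta=6c+3V-2F$. Your additional justification of face-additivity and the induction for the many-component case merely spell out details the paper leaves implicit.
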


That $\delta\in\tfrac12\mathbb{Z}_+$ follows from the following observation~\cite{mo}:
$G$ gives rise to three 4-regular maps $G_{\ell,r}$ (resp. $G_{\ell,s}$, $G_{r,s}$), called
the \emph{jackets} of $G$, which are obtained from $G$  by deleting straight faces (resp. deleting right faces, deleting left faces), see Figure~\ref{fig:jacketex} for an example. Note that $G_{\ell,r}$ is an orientable map, while $G_{\ell,s}$ and $G_{r,s}$ are typically only locally orientable (in a usual ribbon representation, edges have twists). Let $g_{\ell,r}$, $g_{\ell,s}$
and $g_{r,s}$ be the respective genera of $G_{\ell,r}$, $G_{\ell,s}$, $G_{r,s}$ (since $G_{\ell,r}$ is orientable, $g_{\ell,r}\in\mathbb{Z}_+$, while $g_{\ell,s}$ and $g_{r,s}$ are in $\tfrac12\mathbb{Z}_+$). 
 Let $F_{\ell}$, $F_r$, 
$F_s$ be the numbers of left faces, right faces, and straight faces of $G$. Denoting by $E$
the number of edges of $G$, the Euler relation gives
$V-E+(F_{\ell}+F_r)=2c-2g_{\ell,r}$,  $V-E+(F_{\ell}+F_s)=2c-2g_{\ell,s}$, $V-E+(F_{r}+F_s)=2c-2g_{r,s}$.
Since $F=F_{\ell}+F_{r}+F_{s}$ and $E=2V$ (because the map is 4-regular) we conclude that 
$$
\delta=g_{\ell,r}+g_{\ell,s}+g_{r,s}.
$$
We call $g:=g_{\ell,r}\in\mathbb{Z}_+$ the \emph{(canonical) genus} of $G$. An MO-graph $G$ is 
called \emph{planar} if $g=0$. 

\begin{figure}
\centering
\def\svgwidth{0.5\columnwidth}
\includegraphics[width=8cm]{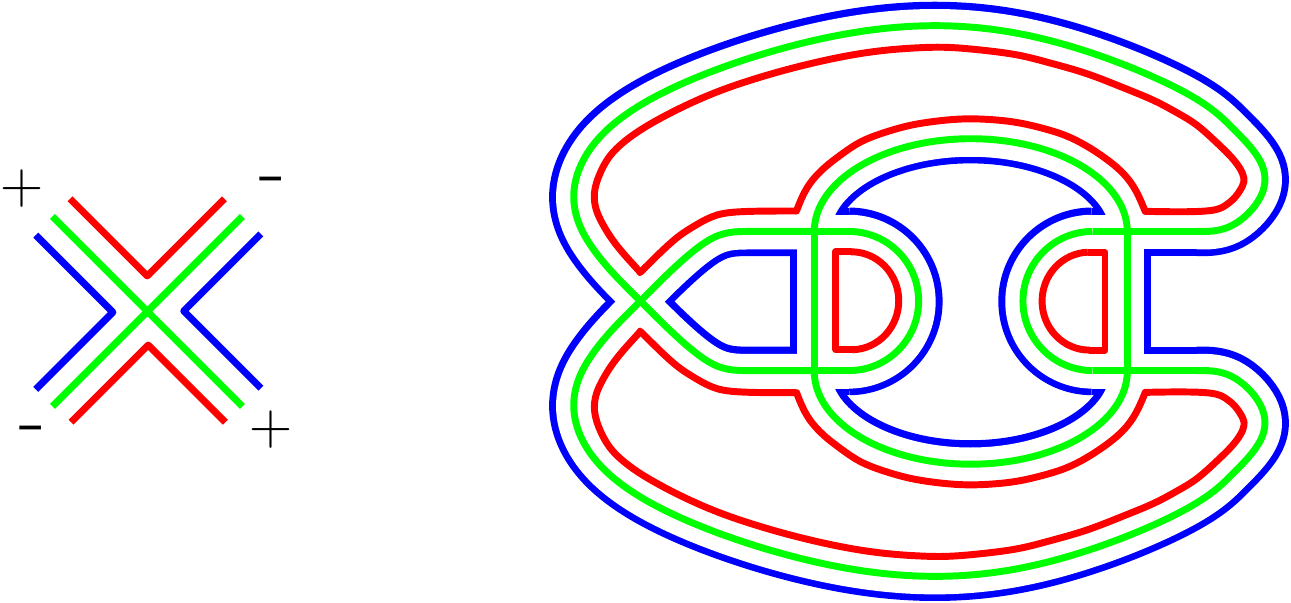}
\caption{An MO-graph, with the 3 types of strands (left strands in blue, straight strands in green, right strands in red).}
\label{fig:exemple_MO_graph}
\end{figure}

\begin{figure}
\centering
\def\svgwidth{0.65\columnwidth}
\includegraphics[width=10cm]{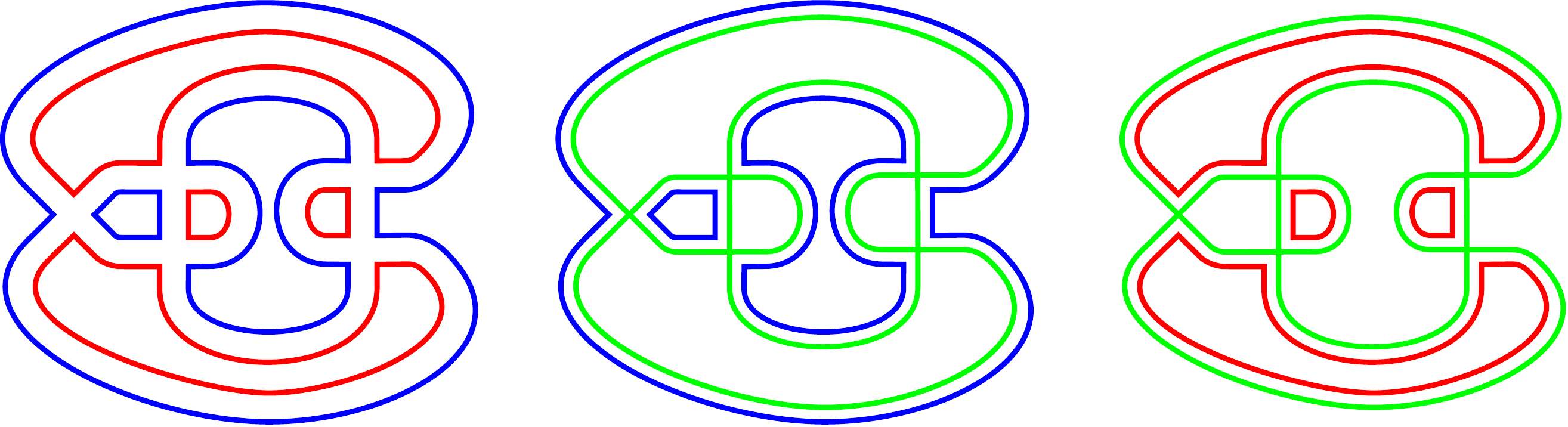}
\caption{The three jackets of the MO graph in Figure~\ref{fig:exemple_MO_graph}.}
\label{fig:jacketex}
\end{figure}

\subsection{MO-graphs as oriented 4-regular maps.}
Seeing edges as directed from the positive to the negative half-edge, 
MO-graphs may be  seen as 4-regular maps where the edges are directed such that each vertex has two ingoing and two outgoing half-edges, and the two ingoing (resp. two outgoing) half-edges are opposite, see Figure~\ref{fig:exemple_MO_graph_oriented} for an example. We call \emph{admissible orientations} such orientations of 4-regular maps. 
This point of view of MO-graphs as oriented 4-regular maps is quite convenient and will be adopted most of the time in the rest of the article (except
when we need to locally follow the strand structure).
Note that  the left faces are directed counterclockwise (ccw), and the right faces are directed cw; and the straight faces are closed walks that alternate in direction at each vertex they pass by.

\begin{figure}
\centering
\def\svgwidth{0.5\columnwidth}
\includegraphics[width=8cm]{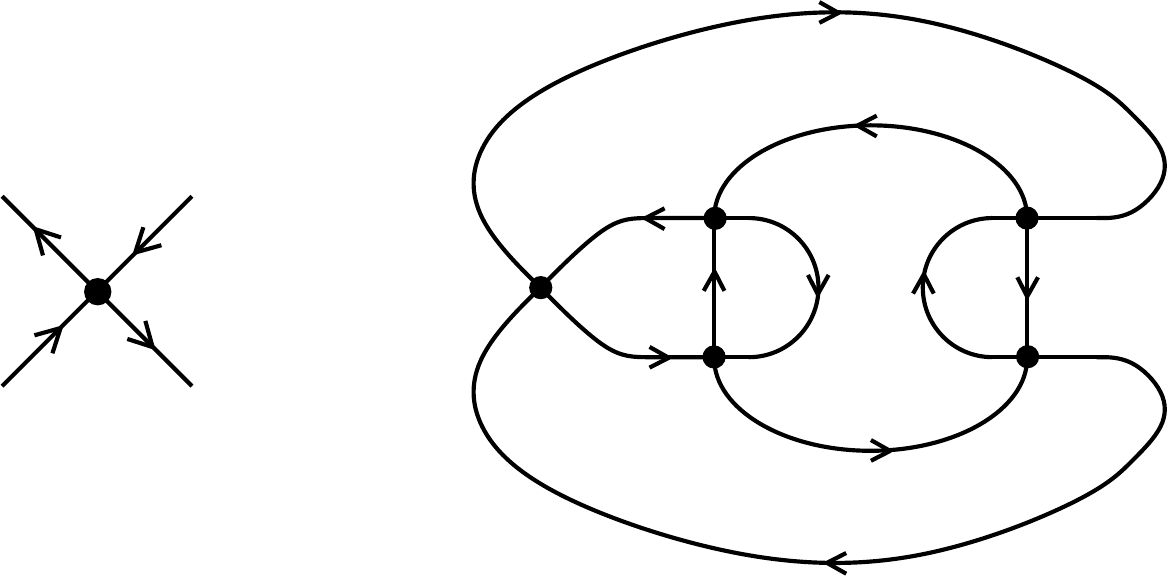}
\caption{The MO-graph shown in Figure~\ref{fig:exemple_MO_graph}, represented as an oriented 4-regular map.}
\label{fig:exemple_MO_graph_oriented}
\end{figure}

\begin{rk}
Since straight faces alternate at each vertex they pass by, straight faces have even length. 
\end{rk}

Note also that admissible orientations are completely \emph{characterized} by the property that the faces of the 4-regular map (not including the straight faces) are either cw or ccw; this is also 
equivalent to coloring the faces of the map in two possible colors, say red or blue,
 such that at each edge there is a blue face on one side and a red face on the other side (this 
 is also directly visible given the definition with strands, where right faces are red and left faces are blue, see Figure~\ref{fig:exemple_MO_graph}). In other words, a 4-regular map can be endowed with an admissible orientation
iff the dual of every of its connected components is a bipartite quadrangulation.
If this is the case, and if there is a marked directed edge in each connected component,
then there is a unique admissible orientation that fits with the prescribed edge directions 
(existence follows from the above discussion, and there is clearly a unique way 
to propagate the directions starting from the marked directed edges).  

Recall that, in the context of maps, a \emph{rooted map} means a connected map with a marked directed edge.  
The above paragraph yields the following statement, which we think is worth mentioning even if it will not be needed to perform the combinatorial study (scheme extraction and associated analysis) of MO-graphs:

\begin{rk}\label{rk:bij_4regular}
Rooted MO-graphs having genus $g$ and $m$ vertices may be identified with rooted 4-regular maps having genus $g$ and $m$ vertices, and with the property that the dual rooted quadrangulation is bipartite (these are themselves well-known to be in bijection with rooted connected maps having genus $g$ and $m$ edges). In genus $0$ (planar case), all rooted 4-regular maps have this property. 
\end{rk}

We conclude this subsection with a simple lemma relating the degree and the genus to 
the number of straight faces:

\begin{lem}\label{lem:straight_faces}
Let $G$ be a connected MO-graph. Let $V$ be the number of vertices, $F_s$ the number of straight faces, and for each $p\geq 1$,  $F_s^{(p)}$ the number of straight faces of length $2p$ in $G$.  Then the quantity
$\Lambda:=\sum_{p\geq 1}(p-2)\cdot F_s^{(p)}+2=V-2F_s+2$ satisfies
$$
\Lambda=2\delta-4g,
$$
where $\delta$ and $g$ are respectively the degree and the genus of $G$. Hence $\Lambda\leq 2\delta$. 
\end{lem}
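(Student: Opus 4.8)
The plan is to prove the two equalities in the statement separately and then read off the inequality. The second equality, $V-2F_s+2 = 2\delta-4g$, is essentially bookkeeping with the Euler relations already recorded in Section~\ref{sec:degree}, so I would dispatch it first. The first equality, $\sum_{p\geq1}(p-2)F_s^{(p)}+2 = V-2F_s+2$, is the only place where genuine combinatorial input about the strand structure is needed, and that is where I expect the real work to lie.

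For the second equality, since $G$ is connected we have $c=1$ and $E=2V$. Substituting $E=2V$ into the three Euler relations gives $F_\ell+F_r = V+2-2g_{\ell,r}$, $F_\ell+F_s = V+2-2g_{\ell,s}$, and $F_r+F_s = V+2-2g_{r,s}$. Forming the combination $2F_s = (F_\ell+F_s)+(F_r+F_s)-(F_\ell+F_r)$ and substituting yields $V-2F_s+2 = 2g_{\ell,s}+2g_{r,s}-2g_{\ell,r}$. Since $\delta = g_{\ell,r}+g_{\ell,s}+g_{r,s}$ and $g=g_{\ell,r}$, the right-hand side is exactly $2\delta-4g$, as wanted.

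For the first equality, after expanding $\sum_{p\geq1}(p-2)F_s^{(p)} = \sum_{p\geq1}p\,F_s^{(p)} - 2F_s$, the claim reduces to showing $\sum_{p\geq1}p\,F_s^{(p)} = V$, i.e.\ that the total length of all straight faces equals $2V$. I would establish this directly from the strand structure: across each edge the two middle (internal) strands are glued to one another, so every edge carries exactly one internal strand segment, and each such segment belongs to exactly one straight face. Hence the total number of internal strand segments is $E=2V$, and since a straight face of length $2p$ consists of $2p$ such segments, summing over faces gives $\sum_{p\geq1}2p\,F_s^{(p)} = 2V$ (this is also consistent with the parity statement of the Remark, since the alternation of direction at successive vertices forces the segment count of a straight face to be even). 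Dividing by $2$ gives $\sum_{p\geq1}p\,F_s^{(p)} = V$. The final inequality is then immediate: $g=g_{\ell,r}\in\mathbb{Z}_+$, so $\Lambda = 2\delta-4g \leq 2\delta$.

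The main obstacle is the identity $\sum_{p\geq1}p\,F_s^{(p)}=V$; everything else is formal. The delicate point is to justify cleanly that each edge contributes exactly one internal strand segment and that the straight faces partition these segments — in other words, to argue at the level of strands rather than of the oriented 4-regular map, the viewpoint in which straight faces are invisible. An alternative, map-theoretic route to the same identity would be to apply the relation ``total face length $=2E=4V$'' inside each of the three jackets and solve the resulting linear system in the total lengths of left, right, and straight faces; this avoids strand-chasing but requires checking that the length of a given left/right/straight face is the same in every jacket in which it appears.
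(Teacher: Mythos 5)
Your proposal is correct and follows essentially the same route as the paper's proof: the identity $\sum_{p\geq 1}2pF_s^{(p)}=E=2V$ (one internal strand per edge, each lying on exactly one straight face) combined with the same linear combination of the three jacket Euler relations $-V+(F_{\ell}+F_r)=2-2g_{\ell,r}$, $-V+(F_{\ell}+F_s)=2-2g_{\ell,s}$, $-V+(F_{r}+F_s)=2-2g_{r,s}$, and then $g=g_{\ell,r}\geq 0$ for the inequality. The only difference is presentational: you spell out the strand-counting justification that the paper states in one line, which is a fine addition.
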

\begin{proof}
Since the number $E$ of  edges is twice the number of vertices, we have $\sum_{p\geq 1}2pF_s^{(p)}=E=2V$, so that $\sum_{p\geq 1}pF_s^{(p)}=V$, hence $\Lambda=V-2F_s+2$. 
Let $F_{\ell}$, $F_r$ be the numbers of left faces and right faces of $G$. 
We have seen in Section~\ref{sec:degree}  that $\delta=g_{\ell,r}+g_{\ell,s}+g_{r,s}$, where $g=g_{\ell,r}$ and  
$$
-V+(F_{\ell}+F_r)=2-2g_{\ell,r},\ \  -V+(F_{\ell}+F_s)=2-2g_{\ell,s},\ \  -V+(F_{r}+F_s)=2-2g_{r,s},\ \  
$$
Substracting the sum of the two last equalities from the first one, we obtain:
$$
V-2F_s=-2+2g_{\ell,s}+2g_{r,s}-2g_{\ell,r}=-2+2\delta-4g,
$$
so that $\Lambda=2\delta-4g$. 
\end{proof}

\begin{rk}\label{rk:doodles}
For $g=0$, according to Remark~\ref{rk:bij_4regular}, rooted MO-graphs correspond to rooted  4-regular planar maps. Seeing such a map as the planar projection of an entangled link that lives in the 3D space (vertices of the map correspond to crossings, where the under/over information is omitted),  $F_s$ is classically interpreted as the number of \emph{knot-components}. 
Lemma~\ref{lem:straight_faces} gives $F_s=V/2+1-\delta$. Since $F_s\geq 1$ and $\delta\geq 0$, the extremal cases are: (1) $\delta=0$, in which case $F_s=V/2+1$; (2) $F_s=1$, in which case $\delta=V/2$
(the MO-graph of Figure~\ref{fig:exemple_MO_graph_oriented}, having $V=5$, $F_s=2$, and thus $\delta=3/2$, is intermediate). While the case $\delta=0$ is combinatorially well-understood (as will be recalled in Section~\ref{sec:extract_melon_free_core}), the case $F_s=1$ is notoriously difficult: 
in that case, 4-regular planar maps are projected diagrams of one-component links (i.e., knot diagrams), see~\cite{ScZi} and references therein.   

We will go back to this remark
in Section~\ref{sec:gen_funct} (on generating functions), where we will establish, for each fixed $\delta\in\tfrac12\mathbb{Z}_+$, the asymptotic enumeration of rooted 4-regular planar maps with $2n$ vertices and with $n+1-\delta$ knot-components, as $n\in\delta+\mathbb{Z}$ goes to infinity.  
\end{rk}


\subsection{Regular colored graphs for $D=3$ as a subfamily of MO-graphs}\label{sec:regular_subfamily}
As recalled here from~\cite{mo}, MO-graphs form a superfamily of the well studied \emph{regular colored graphs} of dimension $3$.  
For $D\geq 2$ a \emph{regular colored graph  of dimension $D$}   
 is defined as a $(D+1)$-regular bipartite graph $G$ (vertices are either
black or white) where the edges have a color in $\{0,\ldots,D\}$, such that at each vertex the $D+1$ incident   edges have different colors.  A \emph{rooted} colored graph is a connected colored graph with a marked edge of color $0$.   For $0\leq i<j\leq D$, a \emph{face} of type $(i,j)$ in a colored graph
of dimension $D$ is a cycle made of edges that alternate colors in $\{i,j\}$; let $F_{i,j}$ be the number of faces of type $(i,j)$ in $G$. Let $c$ be the number of connected components, $V=2k$ the number of vertices and 
$F$ the total number of faces of $G$, i.e., $F=\sum_{0\leq i<j\leq D}F_{i,j}$. 
 The \emph{degree} of $G$ is the integer $\delta$ given by 
\begin{equation}
\delta=\frac12D(D-1)k+c\cdot D-F.
\end{equation}

\begin{figure}
\begin{center}
\includegraphics[width=12cm]{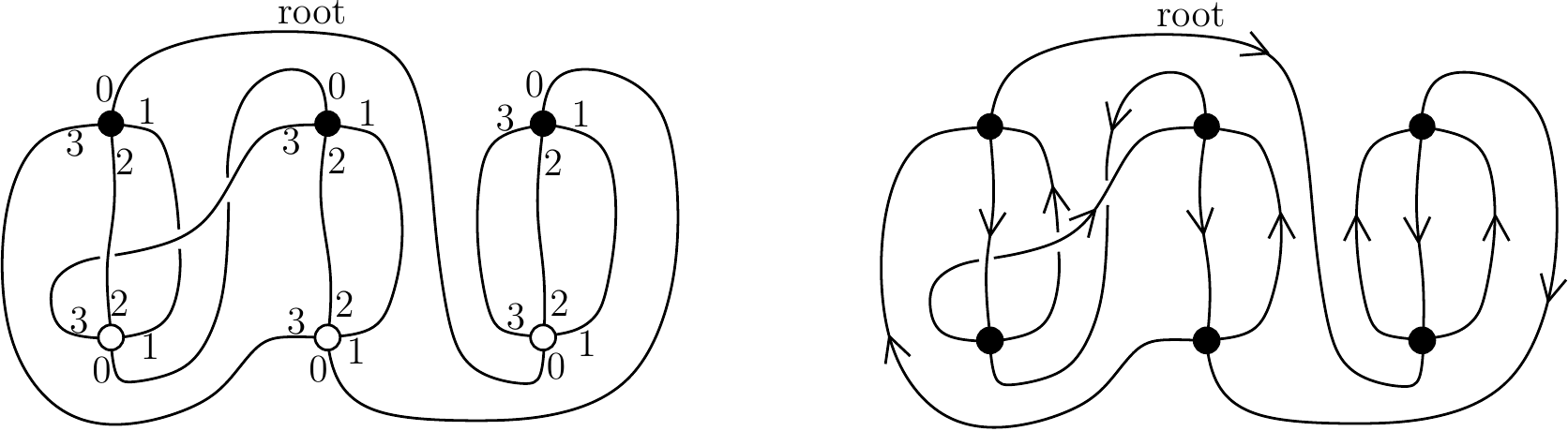}
\end{center}
\caption{Left: a rooted colored graph in $3$ dimensions. Right: the induced rooted MO-graph.}
\label{fig:colored}
\end{figure}

Note that a colored graph has a canonical realization as a map, where the 
edge colors in cw (resp. ccw) order around black (resp. white) vertices
are $(0,1,\ldots,D)$. Let $G$ be a rooted colored graph of dimension $3$ that is canonically embedded. Orienting the edges of even color from the black to the white extremity and the edges of odd color from the white to the black extremity, we obtain a rooted MO-graph $\tilde{G}$,
see Figure~\ref{fig:colored} for an example. Clearly this gives an injective mapping, since 
there is a unique way (when possible) to propagate the edge colors starting from the root-edge. 
Hence, rooted colored graphs of dimension $3$ form a subfamily of rooted MO-graphs. In addition 
$$
F_{r}(\tilde{G})=F_{0,1}(G)+F_{2,3}(G),\ \ F_{\ell}(\tilde{G})=F_{1,2}(G)+F_{0,3}(G).\ \ F_{s}(\tilde{G})=F_{0,2}(G)+F_{1,3}(G),
$$
so that $\tilde{G}$ and $G$ have the same total number of faces. Hence the degree formula for colored graphs is consistent with the degree formula for MO-graphs. 

\section{From MO-graphs to schemes}
\subsection{Extracting the melon-free core of a rooted MO-graph}\label{sec:extract_melon_free_core}

\begin{figure}
\begin{center}
\includegraphics[width=8cm]{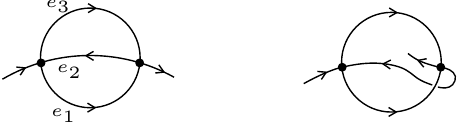}
\end{center}
\caption{Left: a melon. Right: a triple edge that does not form a melon.}
\label{fig:melon}
\end{figure}

From now on, it is convenient to consider that a ``fake'' vertex of degree $2$, called the \emph{root-vertex}, is inserted in 
the middle of the root-edge of any rooted MO-graph (so that the root-edge is turned into
two edges).
By convention also,  it is convenient to introduce the following MO-graph: the \emph{cycle-graph}
is defined as an oriented self-loop carrying no vertex. 
The cycle-graph is connected, has $V=0$, $F=3$ (one face in each
type), hence has degree $0$. In its rooted version, 
 the (rooted) \emph{cycle-graph} is made of an oriented loop incident to the root-vertex. In a (possibly  rooted) MO-graph $G$, a \emph{melon} is a triple edge $(e_1,e_2,e_3)$ such that none of the 
$3$ edges is the root-edge (if $G$ is rooted),  $(e_1,e_2)$ form a 
left face of length $2$, $(e_2,e_3)$ form a right face of length $2$, and $(e_1,e_3)$ form
a straight face of length $2$, see Figure~\ref{fig:melon}. 
Define the \emph{removal} of a melon as the operation below (where possibly $u=v$, and possibly $u$ or $v$ might be the root if the MO-graph is rooted):
\begin{center}
\includegraphics[width=8cm]{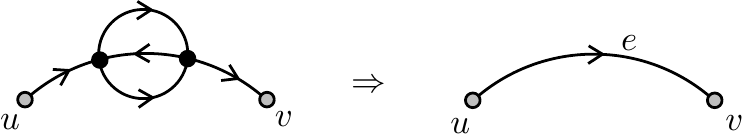}
\end{center}
The reverse operation (where $e$ is allowed to be a loop, and is allowed to be incident to the root-vertex if the MO-graph is rooted) is called the \emph{insertion} of a melon at an edge.

\begin{figure}
\begin{center}
\includegraphics[width=12cm]{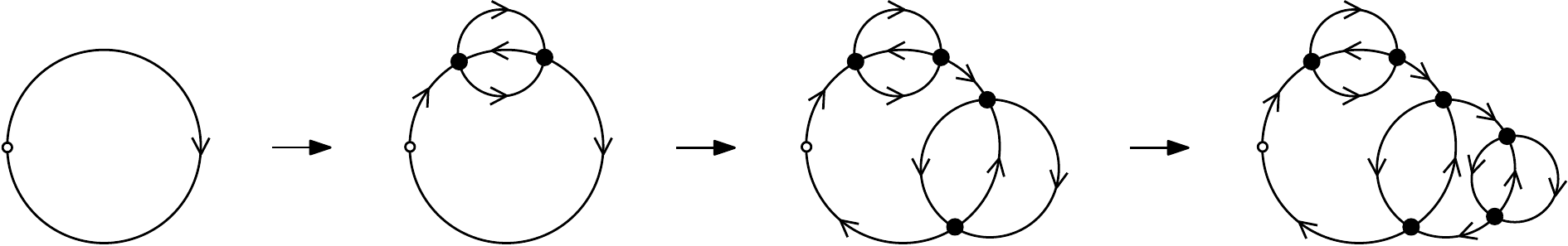}
\end{center}
\caption{A rooted melonic graph is one that can be built from the cycle-graph, by successive
insertions of melons at edges.}
\label{fig:melonic}
\end{figure}

 A rooted MO-graph $G$ is called \emph{melonic} if
it can be reduced to the rooted cycle-graph by successive removals of melons, 
see Figure~\ref{fig:melonic} for an example. 
An important remark is that, for a rooted melonic MO-graph, \emph{any} greedy sequence of melon removals terminates at the cycle-graph.  
  It is known~\cite{DRT} that rooted MO-graphs of degree $0$ are exactly the rooted melonic graphs.  For $G$ a (possibly rooted) MO-graph, $e$ an edge of $G$
(possibly a loop, possibly incident to the root-vertex), and $G'$ a rooted MO-graph,   define the operation of \emph{substituting $e$ by $G'$ in $G$} as in the following generic drawing:

\vspace{.2cm}

\begin{center}
\includegraphics[width=10cm]{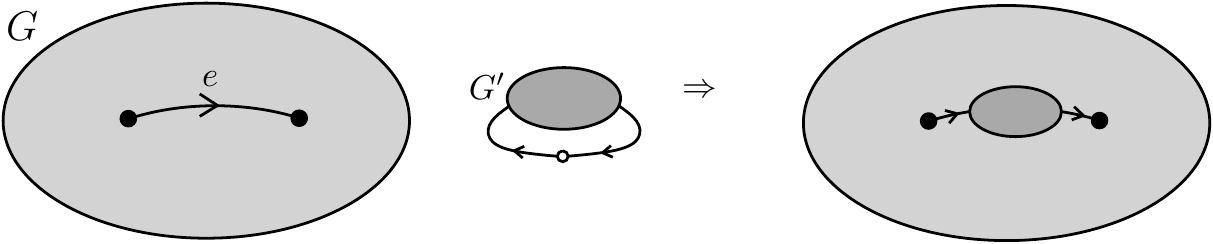}
\end{center}

\vspace{.2cm}

We have the following bijective statement, which is the counterpart for MO-graphs 
of~\cite[Theo.~4]{GS}:
\begin{prop}\label{prop:melon_free}
Each rooted MO-graph $G$ is uniquely obtained as a rooted melon-free MO-graph $H$ ---called the \emph{melon-free core} of $G$--- where
each edge $e$ is substituted by a rooted melonic MO-graph (possibly the rooted cycle-graph, in which case $e$ is unchanged). In addition, the degree of $G$ equals the degree of $H$.  
\end{prop}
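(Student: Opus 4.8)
The plan is to establish the statement in two parts: first the structural decomposition (existence and uniqueness), then the invariance of the degree.

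\smallskip

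\emph{Existence and uniqueness of the decomposition.} I would argue that the melon-free core $H$ is well-defined by a canonical reduction procedure. Starting from $G$, I repeatedly remove melons until no melon remains; the resulting rooted MO-graph is melon-free by construction. The crucial point is that $H$ does not depend on the order of removals. To see this, I would invoke the analogous property already emphasized for melonic graphs (that any greedy sequence of melon removals terminates at the cycle-graph), and upgrade it to a local confluence argument: if two distinct melons can be removed at a given step, then either they involve disjoint sets of edges (in which case the removals commute directly) or they overlap, and I would check by inspecting the few possible overlap patterns of triple edges that the two reduction branches reconverge. Since the reduction strictly decreases the number of vertices, confluence plus termination yield a unique normal form $H$ by Newman's lemma. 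For the reconstruction: each melon removal collapses a rooted melonic piece that was substituted at an edge, so recording, for each edge $e$ of $H$, the rooted melonic MO-graph $G'_e$ that was reduced away at $e$ gives exactly the substitution data. Conversely, substituting these pieces back reconstructs $G$, and since melon insertion/removal and substitution are mutually inverse operations on the edge where they act, the pair $(H, (G'_e)_e)$ is uniquely determined by $G$. I would also note that this is the MO-analogue of \cite[Theo.~4]{GS}, so the scaffolding of the argument parallels the colored case.

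\smallskip

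\emph{Invariance of the degree.} Here I would use Claim~\ref{claim:disconnected} together with the fact (from \cite{DRT}, recalled in the excerpt) that rooted melonic MO-graphs have degree $0$. The cleanest route is to track how the three quantities $c$, $V$, $F$ entering $2\delta = 6c + 3V - 2F$ change under a single melon insertion at an edge $e$. A melon insertion adds one vertex, and I would count the net change in faces directly from the strand picture in Figure~\ref{fig:melon}: inserting a melon creates one new left face of length $2$, one new right face of length $2$, and one new straight face of length $2$, while the pre-existing faces passing through $e$ are merely lengthened, not split or merged. Thus $V$ increases by $1$ and $F$ increases by $3$ (keeping $c$ fixed), so $2\delta$ changes by $3 \cdot 1 - 2 \cdot 3 = -3 + 6 = \ldots$ — more precisely $6\cdot 0 + 3\cdot 1 - 2\cdot 3 = 0$, giving $\Delta(2\delta)=0$. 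Hence melon insertions (and removals) preserve the degree. Since $G$ is obtained from $H$ by substituting melonic pieces, and substitution of a degree-$0$ rooted melonic graph at an edge is exactly a sequence of degree-preserving melon insertions, the degree of $G$ equals that of $H$.

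\smallskip

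\emph{Main obstacle.} I expect the confluence step to be the delicate part: one must verify carefully that overlapping melons reduce to the same melon-free core, which requires a finite case analysis of how two triple-edges can share edges in a $4$-regular map, and a check that the root-edge convention (no melon may use the root-edge) does not obstruct reconvergence. The face-counting in the degree step, while conceptually simple, also demands care in confirming that no straight face passing through $e$ gets accidentally split by the insertion; I would justify this by following the three strands through the inserted melon explicitly, as in the definition of melon removal.
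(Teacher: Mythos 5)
Your overall architecture is the same as the paper's: existence of the core via a maximal greedy sequence of melon removals, uniqueness via order-independence of the reduction, and degree preservation by tracking $(c,V,F)$ under a single melon insertion. Your uniqueness step is phrased as local confluence plus termination (Newman's lemma), where the paper instead argues that any maximal greedy sequence must progressively shell the melonic components $G_e$; these are equivalent in spirit, and the overlap analysis you defer is in fact essentially empty: two distinct melons sharing an edge would force four parallel edges between the same two vertices, i.e.\ a quadruple-edge component, which the root convention rules out in a connected rooted MO-graph, so distinct melons are edge-disjoint and their removals commute.

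The genuine flaw is in your degree computation. A melon is a triple edge joining \emph{two} vertices (Figure~\ref{fig:melon}), so a melon insertion increases $V$ by $2$, not by $1$. Your face count is correct ($\Delta F=3$: one new left, one new right, and one new straight face of length $2$, the pre-existing faces through $e$ being merely lengthened), and $c$ is unchanged; but with the counts you state, the formula $2\delta=6c+3V-2F$ gives $\Delta(2\delta)=3\cdot 1-2\cdot 3=-3$, not $0$ --- the identity ``$6\cdot 0+3\cdot 1-2\cdot 3=0$'' written in your proof is simply false, and if your counts were right each insertion would \emph{lower} the degree by $3/2$, contradicting the very statement you are proving. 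The repair is immediate: with $\Delta V=2$ one gets $\Delta(2\delta)=3\cdot 2-2\cdot 3=0$, which is exactly the paper's argument. As submitted, however, the degree-invariance step does not hold up, so you should correct the vertex count before the proof can be accepted.
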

\begin{proof}
Let $H$ be the rooted melon-free graph obtained from $G$ after performing a maximal greedy sequence of melon removals. Conversely, $G$ is obtained from $H$ where a sequence of melon insertions
is performed. Hence $G$ is equal to $H$ where each edge $e$ is substituted by a rooted melonic 
graph $G_e$. This gives the existence of a melon-free core. Uniqueness of the melon-free core is given  by the observation that any other maximal greedy sequence of melon removals starting from $G$ has to progressively shell the melonic components $G_e$, hence terminates at $H$.  
Finally, it is clear that $G$ and $H$ have the same degree, since a melon insertion preserves the degree (it preserves the number of connected components, increases the number of vertices by $2$,
and increases the number of faces by $3$). 
\end{proof}

\begin{figure}
\begin{center}
\includegraphics[width=12cm]{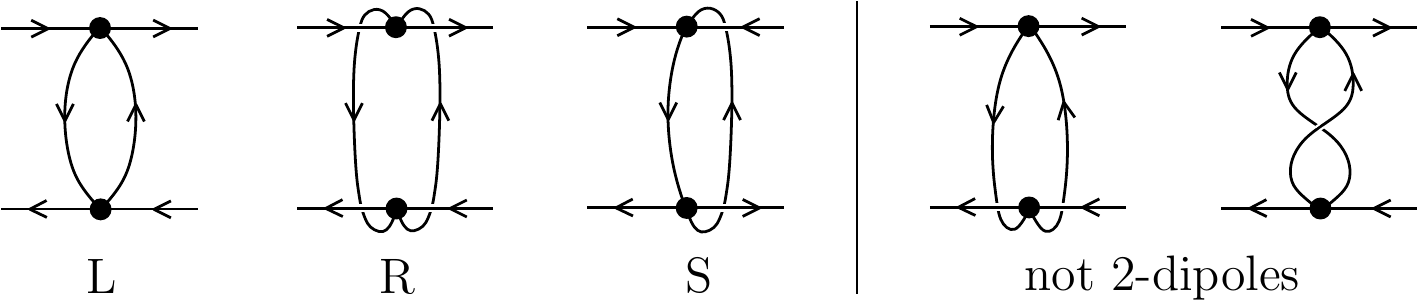}
\end{center}
\caption{Left: the three types of dipoles. Right: two examples of double edges that do not form
a dipole.}
\label{fig:dipoles}
\end{figure}

\begin{figure}
\begin{center}
\includegraphics[width=6cm]{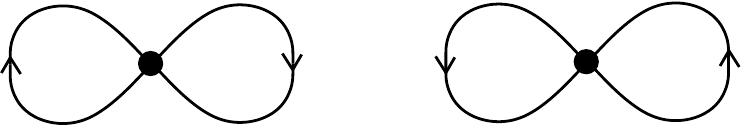}
\end{center}
\caption{Left: the clockwise infinity graph. Right: the counterclockwise infinity graph. Both have 
 two faces of length $2$, two faces of length $1$, and have degree $1/2$.}
\label{fig:infinity}
\end{figure}

\subsection{Extracting the scheme of a rooted melon-free MO-graph}
In this section we go a step further and define the process of extracting a \emph{scheme}
from a rooted melon-free MO-graph. 

Define a \emph{2-dipole}, shortly a dipole,  
in a (possibly rooted) MO-graph $G$ as a face of length $2$ incident to two distinct vertices, and not passing by the root if $G$ is rooted.     
Accordingly, one has three distinct types of dipoles: L, R, or S, see Figure~\ref{fig:dipoles} (left part). 
As the figure shows, each dipole has two exterior half-edges on one side and two exterior
half-edges on the other side. 
Notice also that a double edge does not necessarily delimit a dipole, as shown in Figure~\ref{fig:dipoles} (right part). 

\begin{rk}\label{rk:after_def_dipole}
A face of length $2$ is actually always incident to two distinct vertices, except in the
   MO-graphs that are made of one vertex and two loops. These two graphs, shown in Figure~\ref{fig:infinity}, are 
called the clockwise and the counterclockwise infinity graph, respectively. They have no dipole but have two faces of length $2$.  
\end{rk}

In an MO-graph $G$, define a \emph{chain} as a sequence of  dipoles $d_1\ldots,d_p$ (not passing
by the root if $G$ is rooted) 
such that for each $1\leq i<p$, $d_i$ and $d_{i+1}$ are connected by two edges involving two half-edges on the same side of $d_i$ and two half-edges on the same side of $d_{i+1}$, see  Figure~\ref{fig:chain_dipoles}. A chain is called \emph{unbroken} if all the $p$ dipoles are of the same type. 
A {\it broken chain} is a chain which is not unbroken.
A \emph{proper chain} is a chain of at least two dipoles. A proper chain is called  
  {\it maximal} if it cannot be extended into a larger proper chain. By very similar arguments as in Lemma~8 of~\cite{GS} one obtains the following result:

\begin{figure}[t!]
\begin{center}
\includegraphics[width=12cm]{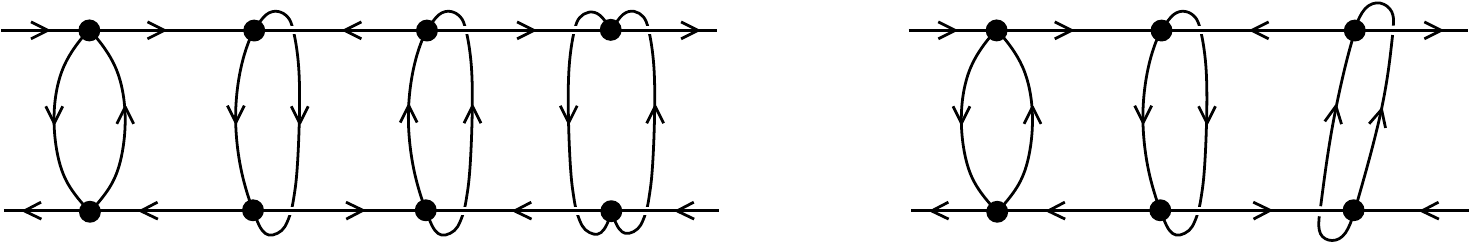}
\end{center}
\caption{Left: a chain of $4$ dipoles. Right: a sequence of 3 dipoles that do not form a chain (the 3rd element, a dipole of type $S$, is connected to the 2nd element by two half-edges on opposite sides).} 
\label{fig:chain_dipoles}
\end{figure}

\begin{claim}\label{claim:disjoint}
In a rooted MO-graph, any two maximal proper chains are vertex-disjoint. 
\end{claim}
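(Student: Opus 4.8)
The plan is to translate the statement into the combinatorics of a single auxiliary graph on dipoles, and then to treat separately the one delicate situation, namely a vertex lying on two dipoles at once. First I would record the local bookkeeping at a dipole: a dipole is carried by a double edge joining its two vertices, and its four exterior half-edges split into two \emph{sides}, each side carrying exactly one exterior half-edge at each of the two vertices (this is the ``ladder'' picture underlying Figure~\ref{fig:chain_dipoles}). By definition a chain-adjacency between two dipoles matches one whole side of the first to one whole side of the second, consuming both half-edges of each. Since a side has only two half-edges, each side of a dipole is involved in at most one chain-adjacency, so each dipole is chain-adjacent to at most two others. I would then introduce the \emph{dipole adjacency graph} $\mathcal{D}$, whose vertices are the dipoles and whose edges are the chain-adjacencies; the previous sentence says $\mathcal{D}$ has maximum degree $2$, so its connected components are paths and cycles, and each maximal proper chain comes from a single component containing at least two dipoles. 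Hence each dipole lies on at most one maximal proper chain, and in particular two distinct maximal proper chains are already \emph{dipole}-disjoint.

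The step doing the real work is a local lemma: \emph{if a vertex $v$ lies on two distinct dipoles $d$ and $d'$, then neither $d$ nor $d'$ lies on any proper chain.} To prove it I would use that the four half-edges at $v$ split into the two half-edges of the double edge of $d$ (towards its other vertex, say $A$) and the two half-edges of the double edge of $d'$ (towards its other vertex, say $B$). Consequently the two exterior half-edges of $d$ at $v$ are exactly the two half-edges of the double edge of $d'$, and, by the ladder picture above, they sit one on each side of $d$. Now for $d$ to be chain-adjacent to some dipole along one of its sides, both half-edges of that side would have to be completed into edges whose far ends are exterior half-edges of a single dipole; but the half-edge of $d$ sitting at $v$ on either side is one of the two edges of the double edge of $d'$, and its far end, at $B$, is an \emph{interior} half-edge of $d'$, hence not an exterior half-edge of any dipole. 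Thus both sides of $d$ are blocked, $d$ is isolated in $\mathcal{D}$, and so $d$ cannot lie on a proper chain; exchanging the roles of $d$ and $d'$ gives the same conclusion for $d'$.

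Granting these two ingredients, the conclusion follows quickly. Suppose, for contradiction, that two distinct maximal proper chains $C$ and $C'$ shared a vertex $v$. Then $v$ is a vertex of some dipole $d$ of $C$ and of some dipole $d'$ of $C'$. If $d=d'$, then this dipole lies on both $C$ and $C'$, forcing $C=C'$ by the first step and contradicting $C\neq C'$. If $d\neq d'$, then $v$ lies on two distinct dipoles, so the local lemma forces both $d$ and $d'$ off every proper chain, contradicting $d\in C$. In either case we reach a contradiction, so $C$ and $C'$ must be vertex-disjoint.

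The hard part is precisely the gap between dipole-disjointness and vertex-disjointness. The maximum-degree-$2$ argument alone only shows that distinct maximal chains use disjoint sets of dipoles; it does not rule out two dipoles sharing a vertex while failing to be chain-adjacent, and this can genuinely occur (the four half-edges at a vertex form two adjacent pairs, each of which can carry a double edge bounding a face of length $2$). The whole point is therefore the local lemma, which shows that such a shared vertex blocks both sides of each of the two dipoles and thereby expels them from every proper chain; this is what ultimately prevents two distinct maximal proper chains from meeting.
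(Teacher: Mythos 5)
There is a genuine gap, and it is fatal rather than cosmetic: your ``local lemma'' in step 2 is false, so the case $d\neq d'$ of your final argument is unproven. The faulty inference is ``its far end, at $B$, is an interior half-edge of $d'$, hence not an exterior half-edge of any dipole'': a half-edge lying on the double edge of $d'$ can perfectly well be an \emph{exterior} half-edge of a third dipole, namely one whose own double edge occupies the other two half-edges at $B$. Concrete counterexample: take an even unbroken $S$-chain consisting of two dipoles $D_1=(w_1,w_2)$ and $D_3=(w_3,w_4)$ (so its two connecting edges join $w_2$ to $w_3$ and $w_1$ to $w_4$, attaching on one side of $D_1$ and one side of $D_3$), and close it up by gluing its two free sides to each other with two further edges, one joining $w_2$ to $w_3$ and one joining $w_1$ to $w_4$, placing the root-vertex on the latter; all orientations are consistent, since each of the four resulting double edges can be directed $w_i\to w_{i+1}$. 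In this rooted $4$-vertex ``necklace'', the pair of edges between $w_2$ and $w_3$ is opposite at both endpoints, hence forms a straight face of length $2$: so $D_2=(w_2,w_3)$ is a dipole (while the $2$-face between $w_4$ and $w_1$ passes through the root and is not). The chain $(D_1,D_3)$ survives as a proper chain, because its two connecting edges avoid the root and still attach on one side of each dipole. Thus $w_2$ lies on the two distinct dipoles $D_1$ and $D_2$ while $D_1$ lies on a proper chain, contradicting your lemma. Two further, smaller gaps: your step 2 silently assumes that $d$ and $d'$ share no edge (false for the $2$-faces of a triple edge, e.g.\ in a melon), and in step 1 the deduction ``paths and cycles, hence each dipole lies on at most one maximal proper chain'' is a non sequitur for cycle components of $\mathcal{D}$, in which every dipole lies on several distinct maximal chains.

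These are not independent accidents; they point at the structural reason your strategy cannot work: the claim is \emph{false for unrooted graphs}. In the unrooted necklace above, $(D_1,D_3)$ and $(D_2,D_4)$ are two maximal proper chains sharing all four vertices. Configurations in which two chains interleave saturate every vertex, so they force the whole (connected) graph to close up, and it is only the presence of the root --- which must sit on some edge and thereby destroys one dipole or one chain-adjacency --- that rules them out. Any correct proof must therefore use rootedness in an essentially global way, whereas your argument is purely local and never invokes the root. The paper itself does not write the proof out (it appeals to the argument of Lemma~8 of Gur\u{a}u--Schaeffer), but that argument is global in exactly this sense, and a repaired proof of the claim here would have to be as well.
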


Let $G$ be a rooted melon-free MO-graph. 
The \emph{scheme} of $G$ is the graph obtained by simultaneously replacing any maximal proper chain
of $G$ by a so-called \emph{chain-vertex}, as shown in Figure~\ref{fig:chains}, see also
Figure~\ref{fig:example_scheme} for an example.  

\begin{figure}[t!]
\begin{center}
\includegraphics[width=13cm]{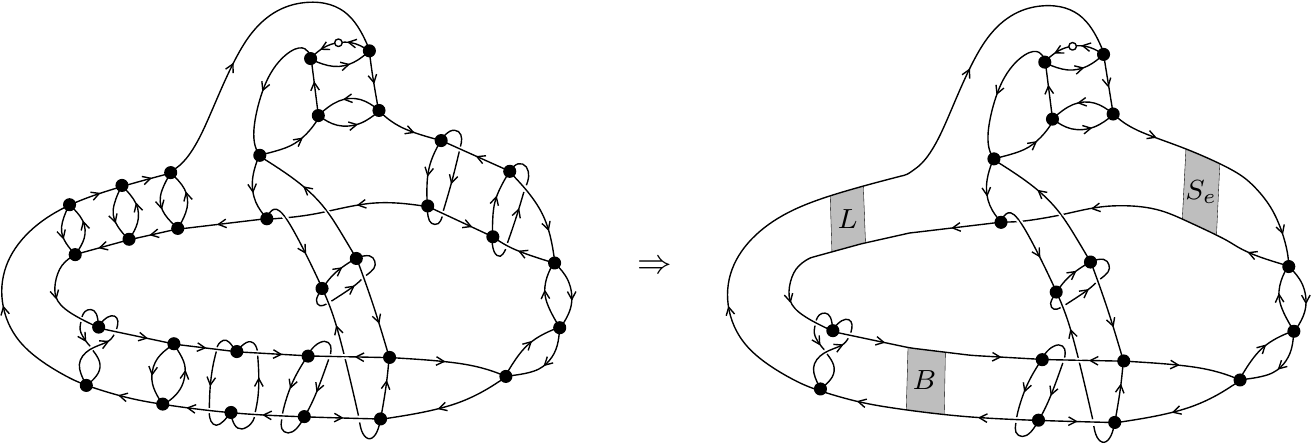}
\end{center}
\caption{Left: a rooted melon-free MO-graph. Right: the associated scheme.} 
\label{fig:example_scheme}
\end{figure}

\begin{figure}[t!]
\begin{center}
\includegraphics[width=4.5cm]{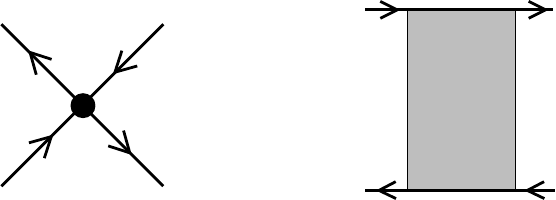}
\end{center}
\caption{The two types of vertices (standard vertex or chain-vertex, which can be labelled
by either $\{L,R,S_e,S_o,B\}$) in an MO-graph with
chain-vertices.} 
\label{fig:rules_MO_graph_chain}
\end{figure}

\begin{figure}[t!]
\begin{center}
\includegraphics[width=12cm]{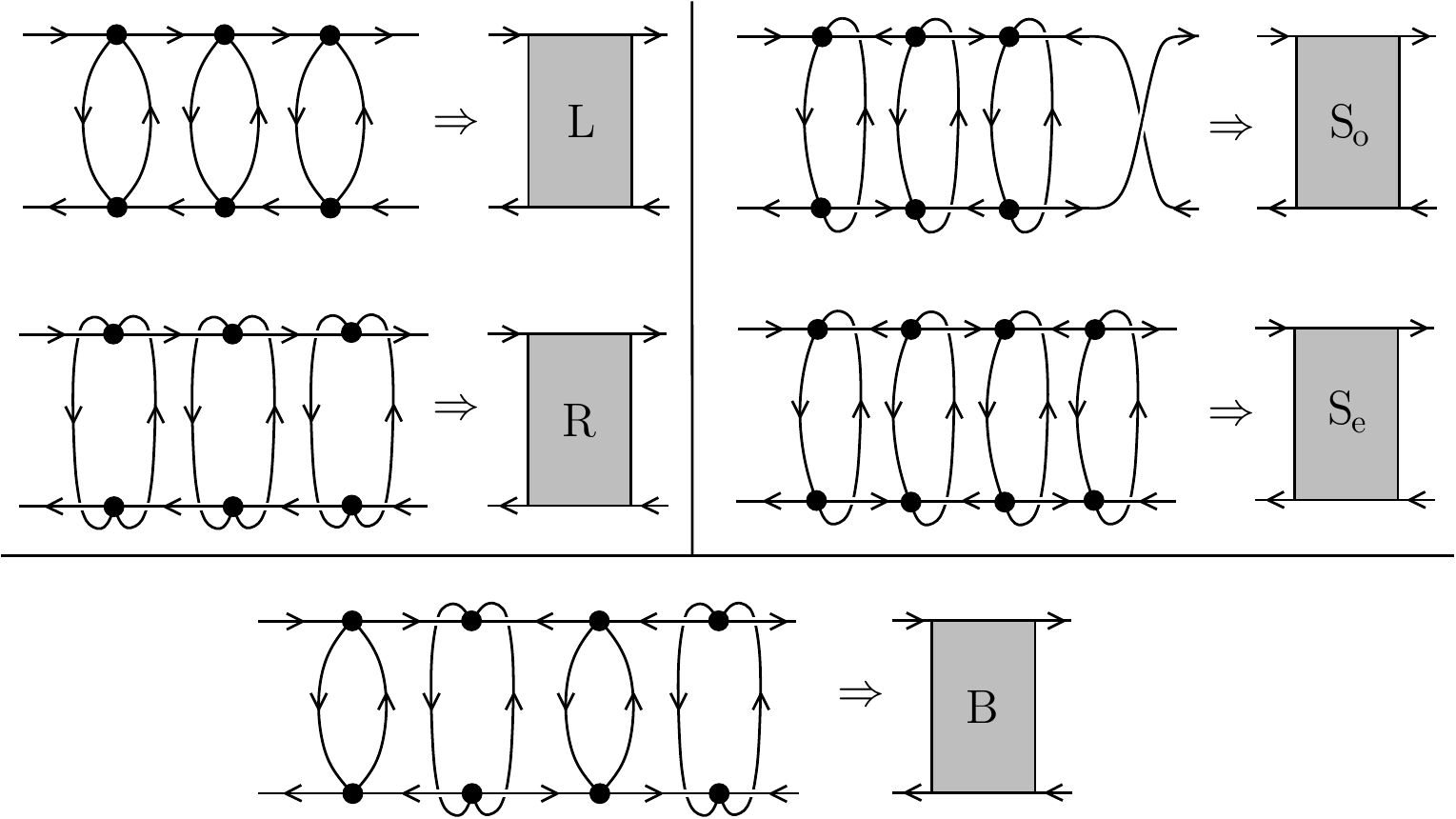}
\end{center}
\caption{Top-part: examples of the 4 types of unbroken proper chains;  Bottom-part: example
of a broken proper chain. In each case, the graphical
symbol for the chain-vertex (in the associated scheme) is shown.} 
\label{fig:chains}
\end{figure}
\begin{figure}[t!]
\begin{center}
\includegraphics[width=12cm]{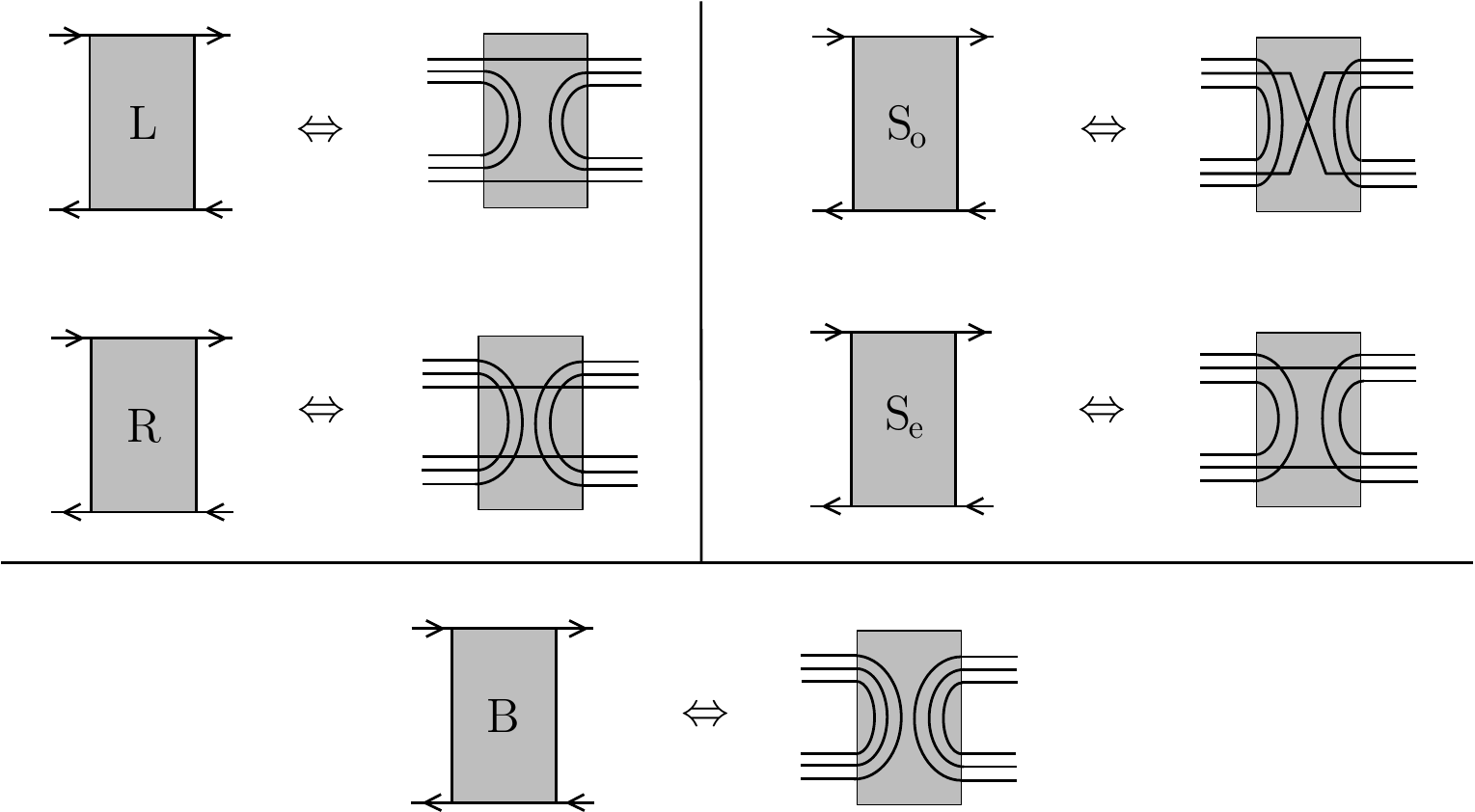}
\end{center}
\caption{The configurations of the strands for each type of chain-vertex.} 
\label{fig:chain_strands}
\end{figure}

Since a scheme is not an MO-graph (due to the presence of chain-vertices), we need
to extend the definition and main properties seen so far in order to allow for chain-vertices.   
In the following,  an \emph{MO-graph with chain-vertices} 
is an oriented 4-regular map with two types of vertices: standard vertices and chain-vertices
(each chain-vertex being labelled by either $\{L,R,S_e,S_o,B\}$),
so as to satisfy the local rules of Figure~\ref{fig:rules_MO_graph_chain}.   
Such a graph is possibly rooted, i.e., has a fake-vertex of degree $2$ in the middle
of some edge.   Note 
that the class of (rooted) 
MO-graph with chain-vertices is \emph{larger} than the class of schemes 
 (we will see below which rooted MO-graphs with chain-vertices are schemes). 
In order to compute 
the number of faces of an MO-graph  $\tilde{G}$ with chain-vertices, 
we need to specify locally the \emph{strand structure} at every type of chain-vertex; as for MO-graphs, we imagine here that each edge is turned into $3$ parallel strands
and we have to specify at each chain-vertex how the incident strands go through the vertex. 
The specification is given by Figure~\ref{fig:chain_strands}; in the case of an unbroken chain-vertex
the two strands that go through the chain-vertex are called the \emph{crossing strands} at
that chain-vertex.  Note that this specification gives the natural strand structure to expect
whenever a chain-vertex $w$ is to be consistently substituted by a chain $c$ (for instance a chain-vertex of type $L$ is to be susbtituted by an unbroken chain of dipoles of type $L$), that is, 
the strand structure of $w$ reflects how the strands arriving at $c$ are routed (some
bouncing back, some going through $c$). 
Then, as for classical MO-graphs, 
a \emph{face} is a closed walk formed from strands. 
The \emph{degree} $\delta$ of $\tilde{G}$  
is defined as
\begin{equation}
2\delta=6c+3V-2F+4U+6B,
\end{equation}
where $c$, $V$, $F$ are as usual the numbers of connected components, standard vertices, and faces,  and where $U$ and $B$ stand respectively
for the numbers of unbroken chain-vertices and broken chain-vertices. 

 An edge $e$ of $\tilde{G}$ is said
to be \emph{adjacent} to a chain-vertex if the two half-edges of $e$ are the two half-edges 
on the same side of a chain-vertex of $\tilde{G}$. Then $\tilde{G}$ is said to be \emph{melon-free} if it has no melon nor an edge adjacent to a chain-vertex. It is easy to see that an MO-graph is melon-free
iff its scheme is melon-free. 

In a melon-free (possibly rooted) MO-graph $\tilde{G}$ with chain-vertices, define a \emph{chain} as a sequence
 $d_1\ldots,d_p$ of elements that are either dipoles (not passing by the root if $\tilde{G}$ is rooted) or chain-vertices, 
such that for each $1\leq i<p$, $d_i$ and $d_{i+1}$ are connected by two edges involving two half-edges on the same side of $d_i$ and two half-edges on the same side of $d_{i+1}$, see  Figure~\ref{fig:chain_dipoles_chain_vert}. 
A \emph{proper chain} is a chain of at least two elements. 

\begin{figure}[t!]
\begin{center}
\includegraphics[width=8cm]{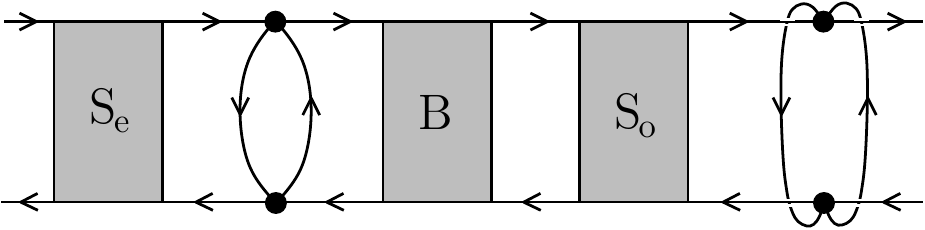}
\end{center}
\caption{A chain of five elements in an MO-graph with chain-vertices: it consists of two dipoles and three chain-vertices.} 
\label{fig:chain_dipoles_chain_vert}
\end{figure}

Now define a \emph{reduced scheme} as a rooted melon-free MO-graph with chain-vertices
and with no proper chain. By construction, the scheme of a rooted melon-free MO-graph (with no chain-vertices) is a reduced scheme. Claim~\ref{claim:disjoint} then easily yields the following bijective statement:

\begin{prop}
Every rooted melon-free MO-graph is uniquely obtained as a reduced scheme where
each chain-vertex is consistently substituted by a chain of at least two dipoles (consistent means that if the chain-vertex is of type $L$, then the substituted chain is an unbroken chain of L-dipoles, etc). 
\end{prop}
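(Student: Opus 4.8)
The plan is to exhibit scheme extraction and chain substitution as mutually inverse operations, so that the real content is to check that extracting the scheme of a rooted melon-free MO-graph and then re-substituting recovers the original graph, and conversely. The forward map is the extraction of the previous subsection: given a rooted melon-free MO-graph $G$, simultaneously replace every maximal proper chain by a chain-vertex of the matching label (Figure~\ref{fig:chains}), where Claim~\ref{claim:disjoint} guarantees that the maximal proper chains are vertex-disjoint, so the replacements do not interfere and the resulting object $S$ is well-defined. The reverse map is substitution: given a reduced scheme $S$, replace each chain-vertex by a consistent chain of at least two dipoles. Recording, as part of the data attached to $G$, exactly which chain replaced each chain-vertex makes existence of such a decomposition immediate; the whole force of the statement is uniqueness, that is, that the two operations are inverse to each other.

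First I would check that extraction lands in the class of reduced schemes. Melon-freeness of $S$ is the already-noted equivalence that an MO-graph is melon-free iff its scheme is melon-free. To see that $S$ has no proper chain I argue by contradiction: a proper chain in $S$ is a sequence of dipoles and chain-vertices, consecutive elements being joined through two half-edges on a common side (Figure~\ref{fig:chain_dipoles_chain_vert}); lifting such a sequence back through the extraction (a standard dipole lifting to itself, a chain-vertex lifting to the maximal proper chain it encodes) would produce in $G$ a proper chain that either strictly enlarges a maximal proper chain or joins dipoles that the extraction had left standard. In every case this contradicts the maximality built into the extraction, so $S$ is indeed a reduced scheme.

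Next I would verify the reverse composition. Starting from a reduced scheme $S$ and performing consistent substitutions, I first check that the result $G$ is a genuine rooted melon-free MO-graph: consistency (an $L$-labelled chain-vertex replaced by an unbroken $L$-chain, and so on, with the $S_e/S_o$ labels matching the parity of the inserted $S$-chain) means the inserted chain reproduces exactly the local strand routing prescribed for that chain-vertex in Figure~\ref{fig:chain_strands}, so $G$ carries a valid admissible orientation and strand structure, and a local inspection at the two ends of each inserted chain shows that no melon and no edge adjacent to a chain-vertex is created, so $G$ is melon-free. The remaining point is that the maximal proper chains of $G$ are exactly the inserted chains, so that re-extraction returns $S$ with the same chains: internally, a consistently inserted chain is one maximal proper chain, and it cannot be enlarged at either end, since such an extension would exhibit in $S$ a proper chain containing the corresponding chain-vertex, contradicting that $S$ has no proper chain; for the same reason a standard dipole of $S$ remains a dipole lying in no proper chain of $G$. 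Hence the two operations are inverse, which gives the claimed bijection.

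The main obstacle I expect is precisely this last maximality analysis at the junctions: one must rule out that an inserted chain silently merges with a neighbouring dipole, chain-vertex, or vertex of $S$ into a longer maximal chain of $G$, and that no melon is created at the gluing. This is where the strand configurations of Figure~\ref{fig:chain_strands} and the precise connection pattern defining a chain (two half-edges on the same side at each junction) have to be used carefully, and it is the MO analogue of the corresponding step in the colored-graph argument of Gur\u{a}u--Schaeffer.
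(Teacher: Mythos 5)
Your proposal is correct and follows essentially the same route as the paper: the paper also treats this as a direct consequence of the construction (extraction of maximal proper chains, well-defined by Claim~\ref{claim:disjoint}, landing by definition in the class of reduced schemes) together with the observation that chain substitution inverts it. You simply spell out in more detail the mutual-inverse verification (in particular the junction analysis showing inserted chains are exactly the maximal proper chains), which the paper leaves implicit.
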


The following result ensures that the degree definition for MO-graphs with chain-vertices is consistent with the replacement of chains by chain-vertices:

\begin{lem}\label{lem:substitute}
Let $G$ be an MO-graph with chain-vertices. 
 And let $G'$ be an MO-graph with chain-vertices obtained from $G$
by consistently substituting a  chain-vertex 
by a chain of dipoles. Then the degrees of $G$ and $G'$ are the same.
\end{lem}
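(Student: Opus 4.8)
The plan is to reduce the statement to a purely local count of faces and then to evaluate that count. Write $\delta,\delta'$ for the degrees of $G,G'$ and suppose the substituted chain consists of $p$ dipoles (with $p\ge 2$). Passing from $G$ to $G'$ is a local operation: the chain-vertex and the chain replacing it both have exactly four external half-edges, attached to the rest of the graph in the same way, so the number $c$ of connected components is unchanged. The chain contributes $2p$ standard vertices and no chain-vertex, whereas the chain-vertex being removed was counted once in $U$ (if it is of type $L,R,S_e$ or $S_o$) or once in $B$ (if it is of type $B$). Hence, writing $\Delta F=F(G')-F(G)$ and using the degree formula $2\delta=6c+3V-2F+4U+6B$, we get
$$
2\delta'-2\delta=3\cdot 2p-2\,\Delta F-4\ \ (\text{unbroken case}),\qquad 2\delta'-2\delta=3\cdot 2p-2\,\Delta F-6\ \ (\text{broken case}).
$$
Thus it remains to prove $\Delta F=3p-2$ in the unbroken case and $\Delta F=3p-3$ in the broken case.

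Next I would localize the face count. The chain-vertex is defined (Figure~\ref{fig:chain_strands}) precisely so that its through-strands route the four external half-edges exactly as the chain it represents does; consequently every face that meets an external half-edge of the gadget is a face of $G$ and of $G'$ in the same way, and such faces are in bijection. Since a single vertex carries no closed internal strand, a chain-vertex has no internal face, so $\Delta F$ equals the number of faces lying entirely inside the chain of $p$ dipoles. I would compute this number by induction on $p$, using that the left, right and straight strands never mix at a vertex (faces being monochromatic), so that the strand structure splits into three independent systems that can be followed separately.

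The inductive claim is that the internal faces of a chain of $p$ dipoles number $3p-2$ if the chain is unbroken and $3p-3$ if it is broken; equivalently, that $6p-2F_{\mathrm{int}}$ equals $4$ in the unbroken case and $6$ in the broken case, matching the coefficients of $U$ and $B$ in the degree formula. A single dipole has exactly one internal face (its length-$2$ face), which gives the base case. For the induction step I would append one dipole at an end of the chain (Figure~\ref{fig:chain_dipoles}) and follow the three strand systems across the two new connecting edges: the appended dipole always creates its own length-$2$ internal face, and the two connecting edges normally close up two further internal faces, for a gain of $3$; the only exception is the step at which an unbroken chain first becomes broken, where inspection of the strand systems shows that one of these closures fails and the gain is only $2$. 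Since the broken/unbroken status can change at most once along the construction (once broken, always broken), this yields $3p-2$ internal faces in the unbroken case and $3p-3$ in the broken case, as required.

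The step I expect to be the main obstacle is this last one: verifying, by a careful tracking of the three strand systems through a dipole and through the two edges joining consecutive dipoles, that appending a dipole adds exactly three internal faces while the chain keeps its status, and exactly two at the unique unbroken-to-broken transition. Everything else is bookkeeping with the degree formula.
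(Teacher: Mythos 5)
Your proposal is correct and follows essentially the same route as the paper: both reduce the claim, via the degree formula $2\delta=6c+3V-2F+4U+6B$, to showing that the substituted chain contributes exactly $3p-2$ (unbroken) or $3p-3$ (broken) new internal faces, and both establish this count by tracking how the three strand systems bounce or cross inside the chain. Your induction on $p$ (with the single unbroken-to-broken transition as the exceptional step contributing $2$ instead of $3$) is just a slightly more systematic packaging of the paper's direct inspection, which in the broken case is itself incremental (``a chain of two dipoles of different types brings $3$ new faces, and any additional dipole brings $3$ new faces'').
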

\begin{proof}
Let $m$ be the chain-vertex where the substitution takes place,  and let $k$ be the 
number of dipoles in the substituted chain. Denote by $\delta, c,V,F,U,B$ the parameters (degree, numbers of connected components, standard vertices, faces, unbroken chain-vertices, broken chain-vertices) for $G$, and by 
$\delta', c',V',F',U',B'$ the parameters for $G'$. Clearly $c'=c$, $V'=V+2k$.
If $m$ is unbroken, then $U'=U-1$ and $B'=B$.  
Say $m$ is of type $L$ (the arguments for types $R,S_e,S_o$ are similar).  
Then the strand structure remains the same, except for 
 $k$ new faces of type $L$ (of degree $2$), $k-1$ new faces of type $R$ (of degree $4$) 
and $k-1$ new faces of type $S$ (of degree $4$) inside the substituted chain. Hence $F'=F+3k-2$. 
  Consequently, $\delta'=\delta$. 
If $m$ is broken, then $U'=U$ and $B'=B-1$. The strand structure remains the same, except for 
 $k-1$ new faces of each type $L,R,S$ inside the substituted chain (an easy case inspection ensures that substituting a chain of two dipoles of different types
always bring $3$ new faces, one of each type, 
and any additional dipole brings $3$ new faces, one of each type). 
Hence $F'=F+3k-3$.  
  Consequently, $\delta'=\delta$. 
\end{proof}

\begin{coro}
A rooted melon-free MO-graph has the same degree as its scheme. 
\end{coro}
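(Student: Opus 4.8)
The plan is to obtain this as a direct consequence of Lemma~\ref{lem:substitute}, by iterating the single-substitution argument. First I would invoke the bijective statement just proved (the Proposition preceding Lemma~\ref{lem:substitute}): every rooted melon-free MO-graph $G$ is uniquely recovered from its reduced scheme $S$ by consistently substituting each chain-vertex of $S$ by a chain of at least two dipoles. Observe that $G$ itself has no chain-vertices, so $U=B=0$ for $G$ and its degree in the extended sense $2\delta=6c+3V-2F+4U+6B$ agrees with its ordinary degree $2\delta=6c+3V-2F$. Thus it suffices to show that $S$ and $G$ have the same degree in the extended sense.

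Next I would perform the substitutions one chain-vertex at a time. By Claim~\ref{claim:disjoint} the chains associated to distinct chain-vertices of $S$ occupy vertex-disjoint regions, so the substitutions are independent and may be carried out in any order; this produces a sequence of MO-graphs with chain-vertices $S=G_0,G_1,\ldots,G_k=G$, where $G_{i+1}$ is obtained from $G_i$ by consistently substituting a single chain-vertex by its chain of dipoles. Crucially, substituting one chain-vertex leaves every other chain-vertex untouched, so each $G_i$ is again a well-defined MO-graph with chain-vertices and Lemma~\ref{lem:substitute} applies at every step, yielding $\delta(G_i)=\delta(G_{i+1})$ for all $i$.

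Chaining these equalities gives $\delta(S)=\delta(G_0)=\cdots=\delta(G_k)=\delta(G)$, which is the assertion. The only point demanding care is to remain within the class of MO-graphs with chain-vertices throughout the process, so that Lemma~\ref{lem:substitute} may legitimately be applied at each intermediate step; this is immediate from the fact that a single substitution affects only the chain-vertex being replaced. Beyond this bookkeeping there is no genuine obstacle: the corollary is just the global form of the local degree-invariance established in Lemma~\ref{lem:substitute}.
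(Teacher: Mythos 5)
Your proof is correct and is essentially the argument the paper intends: the corollary is stated as an immediate consequence of Lemma~\ref{lem:substitute}, obtained by reversing the scheme extraction (each chain-vertex is substituted back by its maximal proper chain) and applying the lemma once per chain-vertex. Your additional bookkeeping via Claim~\ref{claim:disjoint} and the reduction of the extended degree formula to the ordinary one when $U=B=0$ are harmless elaborations of the same route.
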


\section{Finiteness of the set of reduced schemes at fixed degree}
The main result in this section is the following:

\begin{prop}\label{prop:finite}
For each $\delta\in\tfrac12\mathbb{Z}_+$, the set of reduced schemes of degree $\delta$
is finite. 
\end{prop}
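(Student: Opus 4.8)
The plan is to reduce the statement to an a priori bound on the number of vertices. Precisely, I will show that a reduced scheme of degree $\delta$ has at most $f(\delta)$ vertices in total, counting standard vertices and chain-vertices alike. Finiteness then follows immediately: there are only finitely many rooted $4$-regular maps with a bounded number of vertices (hence a bounded number of edges), and each carries only finitely many admissible orientations and finitely many labelings of its chain-vertices by $\{L,R,S_e,S_o,B\}$. So everything hinges on proving $V+U+B\leq f(\delta)$, where as usual $V,U,B$ denote the numbers of standard vertices, unbroken chain-vertices, and broken chain-vertices.

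The arithmetic backbone combines two identities. Since a reduced scheme is rooted, hence connected, the degree formula reads $2\delta=6+3V-2F+4U+6B$, so that $F=3+\tfrac32 V+2U+3B-\delta$. On the other hand, every edge carries three strands and each strand-segment belongs to exactly one face, so $\sum_f\ell(f)=3E=6(V+U+B)$, where $\ell(f)$ is the number of strand-segments of the face $f$. Eliminating $F$ gives
\[
\sum_f(\ell(f)-2)=3V+2U+2\delta-6.
\]
This is an \emph{identity}, valid for every scheme, and therefore cannot by itself bound the number of vertices; the whole content of the proposition is that the reducedness hypotheses (melon-freeness together with the absence of proper chains) forbid the left-hand excess length, equivalently the number of vertices, from growing while $\delta$ is held fixed. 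The one genuinely bounded quantity at our disposal is the straight-face weight of Lemma~\ref{lem:straight_faces}: expanding every chain-vertex of the scheme into a chain of two dipoles produces a melon-free MO-graph of the same degree (by Lemma~\ref{lem:substitute}), to which Lemma~\ref{lem:straight_faces} applies and yields $\sum_{p\geq 1}(p-2)F_s^{(p)}=2\delta-4g-2$, bounded independently of the number of vertices.

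The bound would then be assembled in three steps. First, bound the number of faces of length $1$ and $2$: a length-$2$ face is incident to the root, incident to a chain-vertex, or is a dipole, and since there is no proper chain every dipole is isolated; the straight digons among them are counted by $F_s^{(1)}$, which the expanded form of Lemma~\ref{lem:straight_faces} controls once the long straight faces are controlled. Second, bound the standard vertices $V$ — this is the crux. Third, bound the chain-vertices: since no two chain-vertices, and no chain-vertex and dipole, may be chained, each chain-vertex is separated from the others by genuine standard structure, which should give $U+B=O(V)$, after which the degree formula pins down $U$ and $B$.

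The hard part is the second step, namely ruling out reduced schemes with arbitrarily many standard vertices at fixed $\delta$. The essential difficulty, which distinguishes this from a single $4$-regular map (where fixed genus manifestly does not bound the vertex count, as large planar quadrangulations show), is that one must use the three face-types \emph{simultaneously}: it is the conjunction of left-, right-, and straight-reducibility that rigidifies the scheme. I expect to follow the template of~\cite{GS} (their Lemma~8 and the accompanying finiteness argument): enumerate the local configurations around a standard vertex that are compatible with being melon-free and chain-free, and run a discharging argument showing that each such vertex forces a strictly positive contribution to the bounded quantity $2\delta-4g=\sum_{p\geq1}(p-2)F_s^{(p)}+2$, after first disposing of the ``neutral'' length-$4$ straight faces (which carry weight $0$) by observing that a run of them would itself create a forbidden chain. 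The adaptation is more delicate than in the colored case because the half-integer degrees and the five kinds of chain-vertices considerably enlarge the catalogue of local configurations that the discharging must treat.
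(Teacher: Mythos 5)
Your overall reduction (bound the number of vertices of a reduced scheme by a function of $\delta$, then invoke finiteness of maps of bounded size) is exactly the paper's strategy, and you correctly identify the straight-face weight of Lemma~\ref{lem:straight_faces} as the one bounded quantity available. But both substantive steps of your plan have genuine gaps. Your step 1 is circular: Lemma~\ref{lem:straight_faces} gives $-F_s^{(1)}+\sum_{p\geq 3}(p-2)F_s^{(p)}=2\delta-4g-2$, which controls only the \emph{difference} between the number of straight dipoles and the weighted count of long straight faces; neither term is bounded without an independent bound on the other, and the observation that dipoles are pairwise non-chained says nothing about how many there are. The paper breaks this circle with a separate, nontrivial argument (Lemma~\ref{lem:bound1}): iteratively remove non-separating dipoles and chain-vertices --- each such removal decreases the degree by at least $1$ (Lemma~\ref{lem:removals}), so there are at most $\delta$ of them --- and then analyze the tree of components left by the remaining, separating removals, using Claim~\ref{claim:deg_three} (an uncolored degree-$0$ non-root component has adjacency order at least $3$, else a melon or a proper chain existed in the scheme) to conclude $N(G)\leq 7\delta-1$. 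Nothing in your proposal substitutes for this counting argument, and it is the bound on the number of dipoles that then feeds $F_s^{(1)}\leq k$ into the straight-face identity.

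Your crux step 2 is deferred to a hoped-for discharging argument, and the mechanism you sketch would fail. It is not true that melon-freeness and chain-freeness force each vertex to contribute positively to $2\delta-4g$: a graph can have \emph{all} straight faces of length exactly $4$, which carry weight zero, and --- contrary to your suggestion --- a run of length-$4$ straight faces creates no forbidden chain, because chains are by definition sequences of $2$-dipoles and chain-vertices; faces of length $4$ never enter them. The paper's own terminal configuration in Section~\ref{sec:proof_lem_bound2}, the $6$-vertex graph $C_1\cup C_2\cup C(w_1)$, is precisely such a graph: every straight face has length $4$, and it has no dipole and no chain, so a per-vertex positive charge cannot exist. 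What the paper does instead (Lemma~\ref{lem:bound2}) is bound the set of vertices incident to dipoles, to straight faces of length greater than $4$, to self-intersecting straight $4$-faces, and to $4$-faulty $4$-cycles (this last via a topological-minor/genus argument, genus being bounded in terms of $\delta$), and then prove a rigidity statement: any vertex avoiding all of these, together with its neighbours, forces its two straight $4$-faces plus a third one to close up into that specific $6$-vertex graph, which must then equal all of $G$. This rigidity argument --- not discharging --- is the missing idea; without it, and without Lemma~\ref{lem:bound1}, your text is a research plan rather than a proof.
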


Similarly as in~\cite{GS}, this result will be obtained from two successive lemmas, proved
respectively in Section~\ref{sec:proof_lem_bound1} and Section~\ref{sec:proof_lem_bound2}:

\begin{lem}\label{lem:bound1}
For each reduced scheme of degree $\delta$, the sum $N(G)$ of the numbers of dipoles
and chain-vertices satisfies $N(G)\leq 7\delta-1$. 
\end{lem}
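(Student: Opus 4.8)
The plan is to induct on the degree $\delta$, using the degree formula $2\delta = 6+3V-2F+4U+6B$, which for a reduced scheme simplifies via $c=1$. Two preliminary reductions set things up. First, I would note that the dipoles of a reduced scheme are pairwise vertex-disjoint (this follows from Claim~\ref{claim:disjoint} together with melon-freeness), so that the $2D$ vertices used by the $D$ dipoles are distinct and $V=2D+n_3$ with $n_3$ the number of remaining standard vertices. Second, by Lemma~\ref{lem:substitute} one may collapse the interior dipoles of any chain one at a time without changing the degree; hence a chain-vertex can be treated, for degree bookkeeping, exactly like a single dipole of the corresponding type, and it suffices to understand the effect of deleting one two-vertex body and reconnecting its four legs in pairs (\emph{contraction}).

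The induction splits into two cases according to whether the scheme is ``$2$-connected'' with respect to such contractions. If some dipole or chain-vertex is \emph{separating}, i.e.\ cutting its body disconnects $G$ into two rooted sub-schemes $G_1,G_2$, then I would argue that the degree is additive, $\delta=\delta_1+\delta_2$ (this is the connected-sum behaviour, consistent with Claim~\ref{claim:disconnected}), while the remaining dipoles and chain-vertices distribute as $N(G)=N(G_1)+N(G_2)+1$. The induction hypothesis then gives $N(G)\le (7\delta_1-1)+(7\delta_2-1)+1=7\delta-1$, so that the constant and the additive $-1$ propagate exactly through separations.

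In the complementary case every dipole and chain-vertex is non-separating, and the point is that contracting such an element \emph{strictly} lowers the degree. I would establish this by working in the three jackets $G_{\ell,r},G_{\ell,s},G_{r,s}$ (recall $\delta=g_{\ell,r}+g_{\ell,s}+g_{r,s}$) and reading off from the strand configurations of Figure~\ref{fig:chain_strands} how the faces of each type merge or split under the reconnection; a non-separating body forces at least one jacket genus to drop, so $\delta$ decreases by at least $\tfrac12$. Deleting one such element and invoking the induction hypothesis on the resulting scheme of smaller degree then yields the bound, provided one keeps track of how many dipoles or chain-vertices are created or destroyed by the contraction.

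That last bookkeeping is exactly the step I expect to be the main obstacle, and it is what fixes the precise constant $7$: a single contraction can merge neighbouring structures into new dipoles or chains, so the net change in $N(G)$ need not be $-1$, and one must bound the number of newly created elements against the degree drop, carrying out the case analysis over the three dipole types and the five chain-vertex labels $\{L,R,S_e,S_o,B\}$ to identify the weakest (extremal) type. A secondary point requiring care is to choose the order of contractions so that the remaining elements stay genuine dipoles or chain-vertices (using vertex-disjointness), and to verify the small-degree base cases together with the contribution of the root-vertex, so that both the factor $7$ and the $-1$ come out sharp.
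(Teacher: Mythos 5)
Your induction breaks down in the separating case, and this is not a corner case but the generic one. When you cut a reduced scheme at a separating dipole or chain-vertex, one of the two pieces very often has degree $0$ (think of the dominant schemes of Proposition~\ref{prop:structure_dominant}: \emph{every} chain-vertex there is separating, and the pieces are cycle-graphs, quadruple-edge graphs and infinity graphs). Then two things go wrong at once. First, the other piece keeps the full degree $\delta_2=\delta$, so the induction parameter does not decrease and the recursion makes no progress. Second, the additive constant does not propagate: the bound $N\leq 7\delta-1$ is false at $\delta=0$ (the rooted cycle-graph is a reduced scheme of degree $0$ with $N=0>-1$), so the best you could hope for on a degree-$0$ side is $N(G_1)=0$, which yields $N(G)\leq 0+(7\delta-1)+1=7\delta$, losing the $-1$. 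No one-split-at-a-time induction can recover this, because the bound at fixed $\delta$ is really paid for by degree-$0$ components being ``shared'' among at least three separations. A further (fixable but real) defect is that the pieces $G_1,G_2$, and likewise the graph obtained after a non-separating contraction, need not be reduced schemes at all --- the reconnection of legs can create new melons, dipoles, or proper chains --- so the induction hypothesis cannot be invoked on them without an additional re-reduction step whose effect on $N$ and $\delta$ you would also have to control.

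The paper's proof avoids induction entirely and this is the idea you are missing. One first removes \emph{all} non-separating dipoles and chain-vertices greedily: by Lemma~\ref{lem:removals} each such removal drops the degree by at least $1$, so there are at most $\delta$ removals, and each removal destroys at most $3$ of the counted elements, whence $N(G)\leq N(G')+3\delta$. In the resulting graph $G'$ every remaining dipole/chain-vertex is separating, so $G'$ is a tree $T$ of components whose edges are exactly the $N(G')$ remaining elements. The crucial point --- the analogue of which your proposal has no substitute for --- is Claim~\ref{claim:deg_three}: an uncolored degree-$0$ component not containing the root must have adjacency order at least $3$ in $T$, since otherwise the original scheme would contain a melon or a proper chain, contradicting reducedness. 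Combining this with degree additivity over separations (positive-degree components each contribute at least $1/2$, so there are at most $2(\delta-q)$ of them), the bound $n_0'\leq 2q+1$ on colored/root components, and the tree relation $N(G')=n_0+n_0'+n_+-1$, one gets $N(G')\leq 4\delta-1$ and hence $N(G)\leq 7\delta-1$. In short: the constant $7$ comes from $3\delta$ (losses during non-separating removals) plus $4\delta-1$ (a global tree count), not from propagating $-1$ through binary splits.
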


\begin{lem}\label{lem:bound2}
For $k\geq 1$ and $\delta\in\tfrac12\mathbb{Z}_+$, there is a constant $n_{k,\delta}$ (depending
only on $k$ and $\delta$) such that 
 any connected unrooted MO-graph (without chain-vertices) of degree $\delta$  
with at most $k$ dipoles has at most $n_{k,\delta}$ vertices. 
\end{lem}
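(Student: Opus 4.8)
The plan is to reduce the statement to bounding the number of straight faces, and then to isolate a single hard quantity: the number of straight faces of length exactly $4$. First I would invoke Lemma~\ref{lem:straight_faces}, which for a connected MO-graph gives $V=2\delta-4g-2+2F_s\leq 2\delta-2+2F_s$ since $g\geq 0$. Hence it suffices to bound $F_s$ by a function of $k$ and $\delta$. (The two infinity graphs of Remark~\ref{rk:after_def_dipole} have $V=1$ and are set aside; from now on $G$ is assumed not to be an infinity graph, so that \emph{every} length-$2$ face is incident to two distinct vertices, i.e.\ is a dipole.)

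Next I would control $F_s$ length by length. Straight faces have even length $2p$, and Lemma~\ref{lem:straight_faces} gives $\sum_{p\geq 1}(p-2)F_s^{(p)}=2\delta-4g-2\leq 2\delta-2$. A straight face of length $2$ is exactly an $S$-dipole, so $F_s^{(1)}\leq k$. For $p\geq 3$ each coefficient $p-2$ is at least $1$, whence $\sum_{p\geq 3}F_s^{(p)}\leq \sum_{p\geq 3}(p-2)F_s^{(p)}=2\delta-4g-2+F_s^{(1)}\leq 2\delta-2+k$. Combining, $F_s\leq F_s^{(2)}+2\delta-2+2k$, so that $V\leq 2F_s^{(2)}+6\delta+4k-6$. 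The entire statement therefore comes down to bounding $F_s^{(2)}$, the number of straight faces of length $4$, by a function of $k$ and $\delta$; in particular, in the dipole-free case $k=0$ the whole difficulty is to bound $F_s^{(2)}$.

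The hard part will be exactly this bound on $F_s^{(2)}$, and I expect it to be the main obstacle. The reason it is genuinely delicate is that length-$4$ straight faces are invisible to every Euler-type (linear) identity: they contribute $0$ both to $\sum_p(p-2)F_s^{(p)}$ and, more generally, to the global charge $\sum_f(\ell_f-4)=4\delta-12$ (a bounded quantity). Thus no face-counting relation \emph{alone} can control them, and indeed one can have arbitrarily many length-$4$ external faces with no dipole at all (a toroidal square grid). Seeing $G$ in genus $0$ as a link diagram (Remark~\ref{rk:doodles}), a straight face is a link component, i.e.\ a closed straight-ahead walk turning to the opposite edge at each vertex, and $F_s^{(2)}$ counts components closing up after four crossings. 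The structural phenomenon to establish is that such short straight faces cannot proliferate in the absence of dipoles: a length-$4$ straight face, together with the surrounding left/right faces (which, once $V$ is large and $\delta$ fixed, are themselves forced to be dominated by length-$4$ faces via the two relations symmetric to Lemma~\ref{lem:straight_faces}), creates a double edge delimiting a face of length $2$. Concretely I would attach to each length-$4$ straight face a bounded local neighbourhood, and argue by a discharging/pigeonhole count—using that the total charge $\sum_f(\ell_f-4)=4\delta-12$ is bounded—that a positive fraction of these neighbourhoods must contain a dipole; since there are at most $k$ dipoles, this yields $F_s^{(2)}=O(k+\delta)$ and hence the desired $n_{k,\delta}$.

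An alternative route I would keep in reserve is induction by reduction: if $G$ carries a dipole, contract it (removing two vertices, with a controlled effect on the degree obtained by a case analysis like that in the proof of Lemma~\ref{lem:substitute}), reducing $V$ while tracking how dipoles are created or destroyed. This is the analogue of the size-bounding argument of~\cite{GS}. However, such a reduction does not bypass the obstacle: it terminates at a dipole-free graph, for which one must still prove finiteness, and that base case is precisely the bound on length-$4$ straight faces above. So in either approach the crux, and the step I expect to absorb most of the work, is showing that a connected MO-graph of fixed degree with few dipoles cannot contain many length-$4$ straight faces.
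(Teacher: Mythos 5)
Your opening reduction is correct and closely parallels the start of the paper's own proof: via Lemma~\ref{lem:straight_faces} one gets $F_s^{(1)}\leq k$ and $\sum_{p\geq 3}F_s^{(p)}\leq 2\delta-2+k$, so everything hinges on bounding $F_s^{(2)}$ (the paper phrases this as bounding the vertices incident to dipoles and to straight faces of length larger than $4$). But the crux, which you correctly locate, is exactly where your proposal has a genuine gap, and the mechanism you sketch for it is false. The claim that ``a length-$4$ straight face, together with surrounding length-$4$ left/right faces, creates a double edge delimiting a face of length $2$'' has an explicit counterexample: take the toroidal grid $C_4\times C_4$ with the checkerboard orientation (at vertices $(i,j)$ with $i+j$ even both horizontal edges are ingoing and both vertical edges outgoing, and conversely at the other vertices). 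This is an admissible orientation, hence an MO-graph; it is simple (no double edges, no dipoles), has $16$ vertices, its $8$ straight faces (the rows and columns) all have length $4$, its $16$ external faces (the unit squares) all have length $4$, and its degree is $\delta=3$ (with $g=1$). So every one of its length-$4$ straight faces sits in a perfectly flat, dipole-free neighbourhood in which every charge $\ell_f-4$ vanishes: there is nothing for a discharging/pigeonhole scheme to collect, and ``a positive fraction of these neighbourhoods must contain a dipole'' fails for all of them. Relatedly, $C_4\times C_n$ has $n$ straight faces of length $4$ (the columns) and no dipole whatsoever, showing that absence of dipoles by itself never limits $F_s^{(2)}$; what grows there is the degree, through the long rows, i.e.\ through global structure.

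What is missing is precisely the topological ingredient that the paper introduces and that these examples show to be unavoidable: a length-$4$ straight face in a flat dipole-free region can only exist because the region wraps around a handle, and this wrapping is invisible to any bookkeeping based on face lengths and dipole counts. The paper controls it with \emph{faulty cycles}: a cycle admitting a chordal path joining its two sides yields a topological minor of genus $1$, vertex-disjoint faulty extensions add their genera, and the genus is bounded in terms of $\delta$; a crude counting argument (the constant $d=96768$) then bounds the number of $4$-faulty length-$4$ cycles by $O(\delta)$. Only after excluding vertices on dipoles, on long straight faces, on self-intersecting short faces, \emph{and on $4$-faulty short cycles} does a local argument succeed --- and note that its conclusion is not that a dipole appears, but that the whole graph collapses to the $6$-vertex octahedron graph. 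In $C_4\times C_4$ every straight face is $4$-faulty (the row through a vertex of a column is a length-$4$ chordal path joining the column's two sides), which is exactly how the paper's proof accounts for that example. Your fallback route (contracting dipoles inductively) terminates, as you say, at the same unproven base case. So the proposal, as it stands, reduces the lemma correctly but replaces its hard core by an assertion that is false without the faultiness/genus analysis.
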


\begin{figure}
\begin{center}
\includegraphics[width=12.7cm]{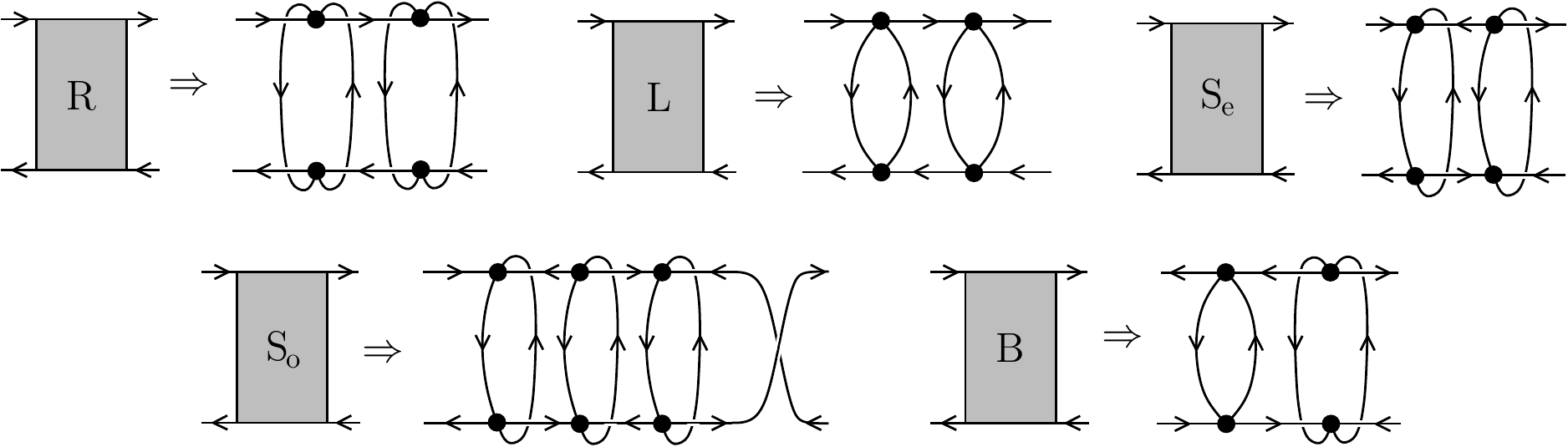}
\end{center}
\caption{The drawings show a choice of canonical substitution of chain-vertices by proper chains that 
preserves the strand structure (hence the degree) and yields
an injective mapping from MO-graphs with chain-vertices to MO-graphs without chain-vertices.}
\label{fig:substitute_canonical}
\end{figure}

Let us show how Lemmas~\ref{lem:bound1} and~\ref{lem:bound2} imply Proposition~\ref{prop:finite}. Let $S$ be a reduced scheme of degree $\delta$, and let $G(S)$
be the rooted MO-graph with no chain-vertices obtained by substituting each chain-vertex of $S$ by a chain of dipoles of length $2$ or $3$ as shown in Figure~\ref{fig:substitute_canonical}. As already seen in Lemma~\ref{lem:substitute}, such substitutions
preserve the degree, and the mapping $S\to G(S)$ is injective. In addition, the number of dipoles of $G(S)$ is at most three times the total number of dipoles and chain-vertices of $S$, hence   
$G(S)$ has at most $3\cdot(7\delta-1)$ dipoles  according to Lemma~\ref{lem:bound1}. 
Unrooting $G(S)$ might increase the number of dipoles by up to $2$ (recall that 
dipoles are not counted in rooted MO-graphs if they pass by the root, and 
at most $2$ dipoles
can pass by the root, the case of $2$ additional dipoles happening 
when the root-edge belongs to a melon).  
Hence, writing $k:=3\cdot(7\delta-1)+3$,  Lemma~\ref{lem:bound2} ensures that $G(S)$
has at most $n_{k,\delta}$ vertices.  
Since there is an injective mapping from reduced schemes of degree $\delta$ to 
rooted MO-graphs of size bounded by the fixed quantity  $n_{k,\delta}$, we conclude that
the number of reduced schemes of degree $\delta$ is finite. 

We finally state the following ``loop-removal'' lemma, which will be useful at some points
in the following:

\begin{figure}
\begin{center}
\includegraphics[width=4cm]{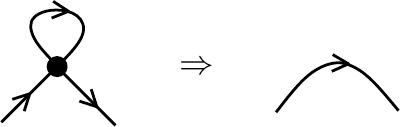}
\end{center}
\caption{Removing a loop in an MO-graph.}
\label{fig:remove_loop}
\end{figure}

\begin{lem}\label{lem:remove_loop}
Let $G$ be an MO-graph of degree $\delta\in\tfrac12\mathbb{Z}_+$,  
let $\ell$ be a loop of $G$, and let $G'$ be the MO-graph obtained
from $G$ by erasing the loop and its incident vertex, as shown in Figure~\ref{fig:remove_loop}. Then
 $G'$ has degree $\delta-1/2$. Hence $G$ has at most $2\delta$ loops. 
\end{lem}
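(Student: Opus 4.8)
The plan is to compute directly how the key parameters $V$, $F$, and $c$ change under the loop-removal operation depicted in Figure~\ref{fig:remove_loop}, and then read off the effect on the degree via the formula $2\delta=6c+3V-2F$. First I would set up notation: let $v$ be the vertex incident to the loop $\ell$. Since the MO-graph is $4$-regular, $v$ has four incident half-edges; the loop $\ell$ accounts for two of them, leaving exactly two other half-edges, which belong to one or two further edges incident to $v$. Erasing $\ell$ and $v$ means reconnecting these two remaining half-edges into a single edge, as the figure shows. So clearly $V$ decreases by $1$.

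Next I would track the faces. The two half-edges of the loop and the two remaining half-edges at $v$ determine the local strand structure; because the loop occupies one corner-pair at $v$, the strands passing through force a specific merging pattern. The removal of a loop (as opposed to a generic edge) is special: the loop together with the vertex carries a small, bounded piece of strand structure, and a careful local inspection of Figure~\ref{fig:remove_loop} should show exactly how many faces are destroyed or merged. I expect the net change to be a decrease of $F$ by $1$ as well, so that $3V-2F$ decreases by $3V$'s contribution of $3$ minus $2F$'s contribution of $2$, giving $2\delta$ a net decrease of $1$, i.e.\ $\delta$ decreases by $1/2$. One must also check that $c$ is unchanged: erasing the loop and rerouting through $v$ cannot disconnect the graph, since the loop contributes nothing to connectivity and the two remaining half-edges are simply spliced, so $c'=c$.

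The main obstacle will be the face-counting step, specifically verifying that removing the loop changes $F$ by exactly $-1$ rather than by some other amount. This requires a genuine case analysis of the three strand types (left, right, straight) at the vertex $v$ and how the loop's strands close up versus how they are rerouted after removal. The admissibility of the orientation constrains the configuration (two ingoing and two outgoing half-edges at $v$, with ingoing and outgoing opposite), which sharply limits the possible local pictures, and I would exploit this to reduce to essentially the single case drawn in Figure~\ref{fig:remove_loop}. Once $\Delta V=-1$, $\Delta F=-1$, $\Delta c=0$ are established, the degree drop $\delta-1/2$ is immediate from the definition.

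Finally, for the bound on the number of loops, I would argue by induction: since each loop removal strictly decreases the degree by $1/2$ and the degree is always a nonnegative element of $\tfrac12\mathbb{Z}_+$, one can remove at most $2\delta$ loops before reaching degree $0$. More carefully, removing any one loop yields an MO-graph of degree $\delta-1/2\geq 0$, so $\delta\geq 1/2$ whenever a loop is present, and iterating, a graph of degree $\delta$ can contain at most $2\delta$ loops. This last claim also uses implicitly that distinct loops can be removed successively without interfering, which follows because loop removal only affects the single vertex carrying the loop.
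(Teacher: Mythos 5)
Your proposal is correct and takes essentially the same route as the paper, whose entire proof is the parameter count you set up: $c$ unchanged, $V$ decreased by $1$, $F$ decreased by $1$, hence $2\delta$ drops by $1$, followed by iterating removals to bound the number of loops. The one step you deferred---verifying $\Delta F=-1$---is settled in the paper by the single observation you should add to complete your case analysis: admissibility forces the two half-edges of the loop to be adjacent at $v$, so the loop's inner strand closes into a face of length $1$, and it is exactly this face that disappears, while every other face through $v$ is merely rerouted through the spliced edge.
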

\begin{proof}
Clearly $G'$ has the same number of connected components as $G$, has one vertex less, and has
one face less (which has length $1$).
\end{proof}

\subsection{Analysis of the removal of dipoles and of chain-vertices}\label{sec:analysis_removal}
We analyze here how the degree of an MO-graph with chain-vertices (not  
necessarily a reduced scheme, possibly with melons) 
evolves when removing a chain-vertex. We then have a similar analysis for the removal
of a dipole.  

\begin{figure}
\begin{center}
\includegraphics[width=12.4cm]{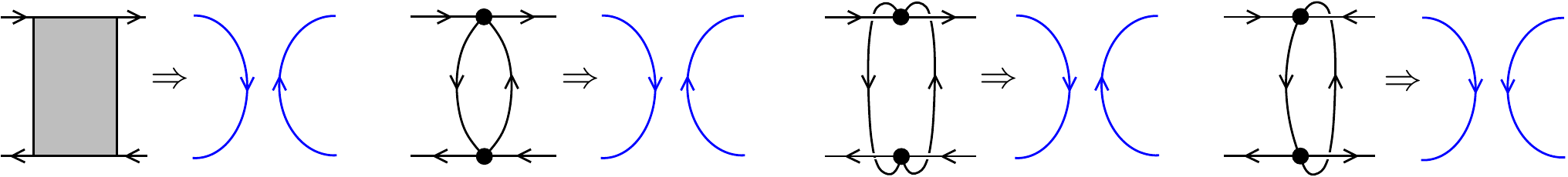}
\end{center}
\caption{The first drawing shows the removal of a chain-vertex (whatever its type). The next drawings show the removal of a dipole in each of the three types (L,R,S).}
\label{fig:deletion}
\end{figure}

Let $G$ be an MO-graph with chain-vertices,  and let $m$ be a chain-vertex of $G$. 
 The \emph{removal} of $m$ consists
of the following operations (see the left part of Figure~\ref{fig:deletion}): 
\begin{enumerate}
\item[(i)] delete $m$ from $G$; 
\item[(ii)] on each side of $m$, connect together the two
detached legs (without creating a new vertex). 
\end{enumerate}
Denote by $G'$ the resulting graph.     
The chain-vertex $m$ is said to be \emph{non-separating} if $G'$ has the same number of connected components as $G$, and \emph{separating} otherwise (in which case $G'$ has one more connected  component).  In the separating case, it might be that one of the two resulting connected components
is the cycle-graph (this happens if two half-edges on one side of $m$ belong to the same edge). 
Let $\delta,c,V,F,U,B$ be the parameters for $G$ (degree, numbers of connected components, standard vertices, faces, unbroken chain-vertices, broken chain-vertices), and
let $\delta',V',F',U',B'$ be the parameters  for $G'$. 

Assume first that $m$ is a broken chain-vertex. We clearly have $V'=V$, $F'=F$ (the strand structure is the same before and after removal), $U'=U$, $B'=B-1$ and $c'=c$ (resp. $c'=c+1$) if $m$ is non-separating (resp. separating). Hence, if $m$ is separating, then
$2\delta'=2\delta+6-6=2\delta$, so that $\delta'=\delta$; and if $m$ is non-separating, then
$2\delta'=2\delta-6$, so that $\delta'=\delta-3$. 

Assume now that $m$ is an unbroken chain-vertex. 
We clearly have $V'=V$, $U'=U-1$, $B'=B$, 
and $c'=c$ (resp. $c'=c+1$) if $m$ is non-separating (resp. separating). 
If the two crossing strands at $m$ belong to a same face, then that face splits into two faces when
removing $m$, hence $F'=F+1$; as opposed to that, if the two crossing strands belong
to different faces, then these two faces are merged when removing $m$, hence $F'=F-1$. 
Moreover, the two crossing strands are easily seen to always be in the same face if $m$
is separating.  We conclude that, if $m$ is separating, then $2\delta'=2\delta+6-2-4=2\delta$, so that
$\delta'=\delta$; and if $m$ is non-separating then $2\delta'=2\delta-2\sigma-4$, where $\sigma\in\{-1,+1\}$, hence either $\delta'=\delta-1$ or $\delta'=\delta-3$.   

\medskip

We now consider the operation of removing a dipole $d$ from $G$, removal which consists
of the following operations (the 3 cases are shown in the right-part of Figure~\ref{fig:deletion}): 
\begin{enumerate}
\item[(i)] delete the two vertices and the two edges of $d$ from $G$; 
 \item[(ii)] on each side of $d$, connect together the two
detached legs (without creating a new vertex). 
\end{enumerate}
Let $G'$ be the resulting graph. Again, 
$d$ is called \emph{non-separating} if $G'$ has as many connected components as $G$, and 
\emph{separating} otherwise.  
Observe that the operation of removing $d$
 is the same as substituting $d$ by an unbroken chain-vertex $m$ 
(of the same type as $d$, so that the substitution does not change the degree according
to Lemma~\ref{lem:substitute}) and then removing $m$. From the discussion on the degree variation  when removing $m$, we conclude that, if $d$ is separating, then the degree is the same in $G'$ as in $G$, and
if $d$ is non-separating then the degree of $G'$ equals the degree of $G$ minus a quantity that is either $1$ or $3$. 
 
This analysis thus leads to the following statement:

\begin{lem}\label{lem:removals}
The degree is unchanged when removing a separating chain-vertex or a separating dipole
(hence, by Claim~\ref{claim:disconnected}, the degree is distributed among the 
resulting components).   
The degree decreases by $3$ when removing a non-separating broken chain-vertex.
The degree decreases by $1$ or $3$ when removing a non-separating unbroken chain-vertex 
or a non-separating dipole.
\end{lem}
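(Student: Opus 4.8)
The plan is to read the whole statement off the additive degree formula $2\delta=6c+3V-2F+4U+6B$ by tracking how each of the five parameters $c,V,F,U,B$ changes under a single removal and substituting the increments into the formula; each of the four assertions then reduces to one arithmetic line, so the only substantive work is justifying the increments case by case. Since the removal of a dipole can, by Lemma~\ref{lem:substitute}, be rewritten as substitution of that dipole by an unbroken chain-vertex of the same type (which preserves the degree) followed by removal of that chain-vertex, I would treat dipoles and unbroken chain-vertices together, leaving only two genuinely different geometric situations: broken chain-vertices and unbroken ones.

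First I would dispose of the separating cases, where deleting the element splits off a new component, so $c\mapsto c+1$ contributes $+6$ to $2\delta'$; the point is that this $+6$ is exactly cancelled. For a separating broken chain-vertex only $B$ changes, $B\mapsto B-1$ contributing $-6$, so $2\delta'=2\delta$. For a separating unbroken chain-vertex (and hence, via the substitution reduction, a separating dipole), $U\mapsto U-1$ contributes $-4$, and—using the key fact that the two crossing strands lie in a common face when the vertex is separating—that face splits, so $F\mapsto F+1$ contributes a further $-2$; again $6-4-2=0$. Thus $\delta'=\delta$ in every separating case, and Claim~\ref{claim:disconnected} immediately upgrades this to the statement that the degree is distributed among the two resulting components.

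For the non-separating cases $c$ is fixed. A non-separating broken chain-vertex changes only $B\mapsto B-1$, giving $2\delta'=2\delta-6$, i.e. a drop of $3$. A non-separating unbroken chain-vertex (equivalently, by substitution, a non-separating dipole) changes $U\mapsto U-1$ (contributing $-4$) together with $F\mapsto F\pm1$ according to whether the two crossing strands lie in one face or in two: in the first case $F\mapsto F+1$ contributes $-2$ and the degree drops by $3$, in the second case $F\mapsto F-1$ contributes $+2$ and the degree drops by $1$. This yields exactly the dichotomy $\{1,3\}$ claimed.

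The genuinely non-routine ingredient, and the step I expect to be the main obstacle, is the strand-level bookkeeping at unbroken chain-vertices encoded in Figure~\ref{fig:chain_strands}: one must verify, first, that removal of the vertex splits or merges precisely one pair of faces (so that $F$ moves by exactly $\pm1$), and second, that separation forces the two crossing strands into the same face. I would establish the second point by following the crossing strands through $m$: once $m$ is deleted and the legs reconnected on each side, the two sides become disconnected, so any closed walk traversing $m$ must re-cross it to close up, and the only available crossing is the other crossing strand; hence a single face carries both. This is the local fact that simultaneously produces the $F\mapsto F+1$ used in the separating computation and rules out any degree change there.
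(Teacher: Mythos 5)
Your proposal is correct and follows essentially the same route as the paper: track the increments of $c,V,F,U,B$ in the degree formula, treat broken and unbroken chain-vertices separately (with the face count changing by $\pm 1$ at an unbroken chain-vertex according to whether the crossing strands share a face), and reduce dipole removal to unbroken chain-vertex removal via Lemma~\ref{lem:substitute}. Your connectivity argument that a separating chain-vertex forces the two crossing strands onto a common face is a welcome addition, since the paper merely asserts this fact as ``easily seen.''
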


\begin{coro}\label{coro:removals}
An MO-graph with chain-vertices and with degree $\delta\in\{0,1/2\}$ 
has no non-separating dipole nor non-separating chain-vertex.
\end{coro}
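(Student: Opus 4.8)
The plan is to argue by contradiction, using Lemma~\ref{lem:removals} to show that the presence of a non-separating dipole or chain-vertex would force the degree strictly below its minimum possible value, namely $0$.

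First I would record the key preliminary fact that every MO-graph with chain-vertices has nonnegative degree. Indeed, starting from such a graph one can consistently substitute each chain-vertex by a chain of (at least two) dipoles of the appropriate type; after finitely many such substitutions one obtains an ordinary MO-graph without chain-vertices. By Lemma~\ref{lem:substitute} each substitution leaves the degree unchanged, so the final ordinary MO-graph has the same degree as the original. Since the degree of an ordinary MO-graph lies in $\tfrac12\mathbb{Z}_+$ and is therefore $\geq 0$, the degree of any MO-graph with chain-vertices is also $\geq 0$.

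Next, suppose for contradiction that an MO-graph $G$ with chain-vertices and degree $\delta\in\{0,1/2\}$ contains a non-separating dipole or a non-separating chain-vertex $x$. Removing $x$ produces an MO-graph with chain-vertices $G'$ of some degree $\delta'$, and Lemma~\ref{lem:removals} tells us that in every non-separating case the degree strictly drops: by $3$ if $x$ is a non-separating broken chain-vertex, and by $1$ or $3$ if $x$ is a non-separating unbroken chain-vertex or a non-separating dipole. In all of these cases $\delta'\leq \delta-1\leq \tfrac12-1=-\tfrac12<0$, which contradicts the nonnegativity of the degree of $G'$ established in the previous paragraph. Hence no such non-separating element exists, proving the corollary.

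The only delicate point — and really the crux of the argument — is the preliminary nonnegativity of the degree; once it is in hand the rest is an immediate application of Lemma~\ref{lem:removals}. I would therefore take care to state the reduction to an ordinary MO-graph precisely (replacing each chain-vertex by a consistent chain of dipoles, with degree preserved by Lemma~\ref{lem:substitute}), since it is exactly this step that legitimately transfers the nonnegativity of $\delta$ from ordinary MO-graphs to the larger class of MO-graphs with chain-vertices on which the corollary is stated.
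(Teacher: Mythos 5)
Your proof is correct and is essentially the paper's own (implicit) argument: the corollary is stated there as an immediate consequence of Lemma~\ref{lem:removals}, since a non-separating removal would drop the degree by at least $1$, below $0$. Your explicit justification of the nonnegativity of the degree for MO-graphs with chain-vertices --- via consistent substitution of chain-vertices by chains of dipoles and Lemma~\ref{lem:substitute} --- correctly fills in the one detail the paper leaves tacit.
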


\subsection{Proof of Lemma~\ref{lem:bound1}}\label{sec:proof_lem_bound1}
We show Lemma~\ref{lem:bound1}, i.e., that for each reduced scheme $G$ of degree $\delta$, the sum $N(G)$ of the numbers of dipoles
and chain-vertices in $G$ satisfies $N(G)\leq 7\delta-1$.  

\begin{figure}
\begin{center}
\includegraphics[width=10cm]{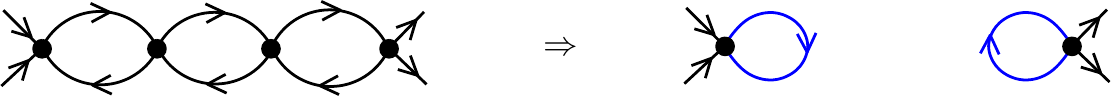}
\end{center}
\caption{In the worst case, removing a dipole might decrease by $3$ the total number of 
(uncolored) dipoles.}
\label{fig:remove_three}
\end{figure}

\begin{figure}
\begin{center}
\includegraphics[width=13cm]{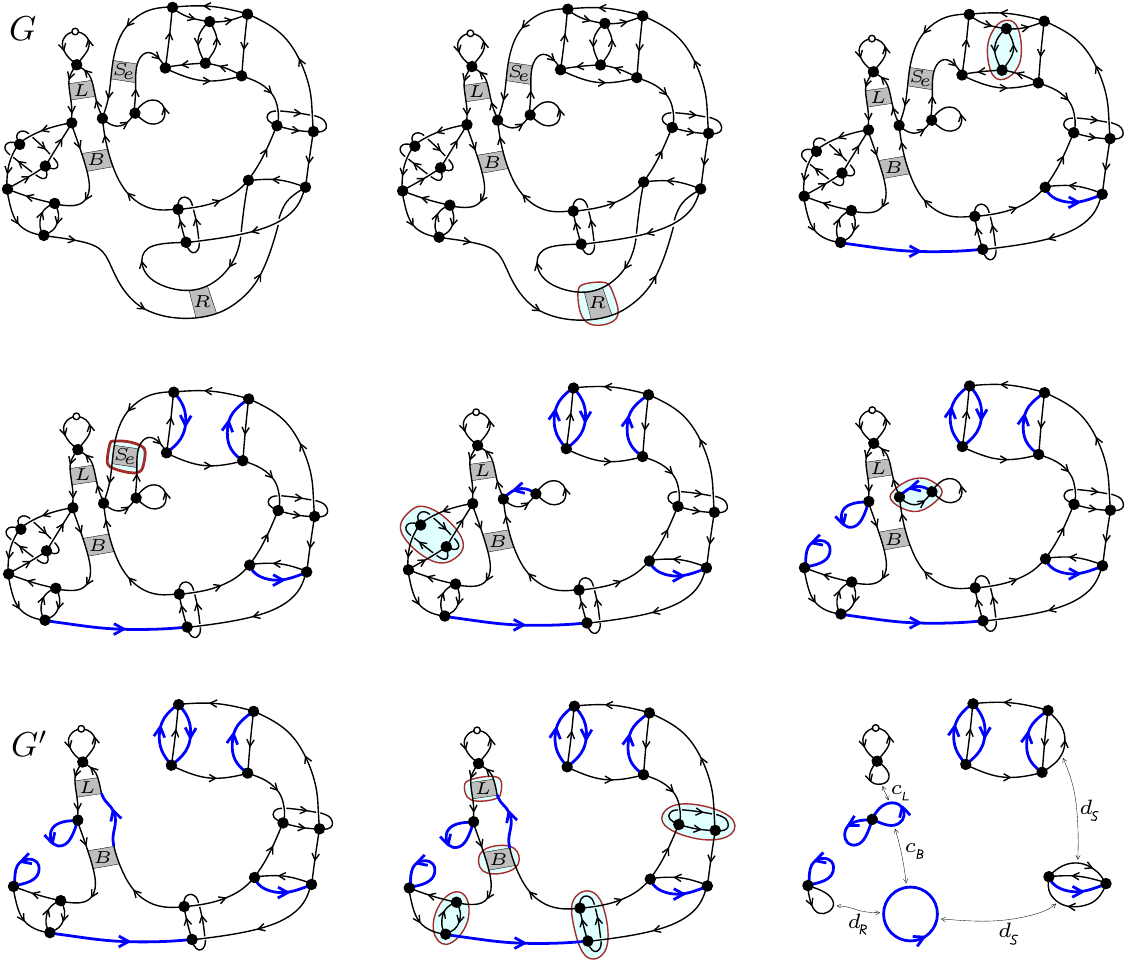}
\end{center}
\caption{The first drawing shows a reduced scheme $G$. Then, iteratively, one removes
at each step a non-separating dipole or a non-separating chain-vertex (at each step the 
non-separating dipole or chain-vertex to be removed next is surrounded). Let $G'$ be the
MO-graph with chain-vertices thus obtained (where colored edges are drawn bolder). 
As the last two drawings show, 
the removal of uncolored dipoles and chain-vertices (which are all separating) of $G'$ yields a tree of components (a tree edge is labelled $c_x$ if it comes from a chain-vertex of type $x$
and is labelled $d_x$ if it comes from an uncolored dipole of type $x$).}
\label{fig:from_scheme_to_tree}
\end{figure}

As a first step, starting from $G$, as long as there is at least one non-separating dipole or one non-separating chain-vertex,  delete it (as in Figure~\ref{fig:deletion}) 
and color the two resulting edges.   
Let $G'$ be the resulting graph with chain-vertices, and let $q$ be the number of removals from $G$ to $G'$. According to Lemma~\ref{lem:removals}, each non-separating removal decreases the degree by at least $1$, hence $q\leq \delta$, $G'$ has degree at most $\delta-q$, and $G'$ 
has at most $2q$ blue edges. 
During the sequence of steps from $G$ to $G'$, denote by $N$ the current 
sum of the numbers of chain-vertices and uncolored dipoles (dipoles with no colored edge).      
It is easy to see that each removal decreases $N$ by at most $3$ (the worst case is shown in Figure~\ref{fig:remove_three}).   
Hence, if we denote by $N(G')$ the value of $N$ for $G'$, we have
$N(G')\geq N(G)-3q\geq N(G)-3\delta$. 

Call a \emph{connection} a chain-vertex or uncolored dipole 
of $G'$. 
Since all  dipoles  
and chain-vertices of $G'$ are separating, $G'$  can be seen as a tree $T$ of ``components'' (see the last row of Figure~\ref{fig:from_scheme_to_tree} for an example), where 
 the edges of $T$ correspond to the $N(G')$ connections of $G'$, 
and the nodes of $T$ correspond to the connected components of $G'$
left after removing the connections 
 (note that some of these connected components might be cycle-graphs, including the component carrying the root). 
For a component $C$ of $T$, the \emph{adjacency order} of $C$
is the number of adjacent components in $T$. 
An edge  $e$ of $C$ resulting from a (removed) connection is called a \emph{marked edge}. 
As shown in Figure~\ref{fig:merger}, 
every marked edge $e$ is involved in $t\geq 1$ connections (hence the adjacency order
of a component is at least its number of marked edges), and results from the merger
of $t+1$ edges of $G'$; $e$ is called \emph{colored} if at least one of these $t+1$ edges
is colored. A component $C$ is called \emph{colored} if  if it bears either 
a colored edge of $G'$ left untouched
by removals, or a colored marked edge; and it is called \emph{uncolored} otherwise.  
Thus, there are at most $2q$ colored components in $G'$. 

\begin{figure}
\begin{center}
\includegraphics[width=12cm]{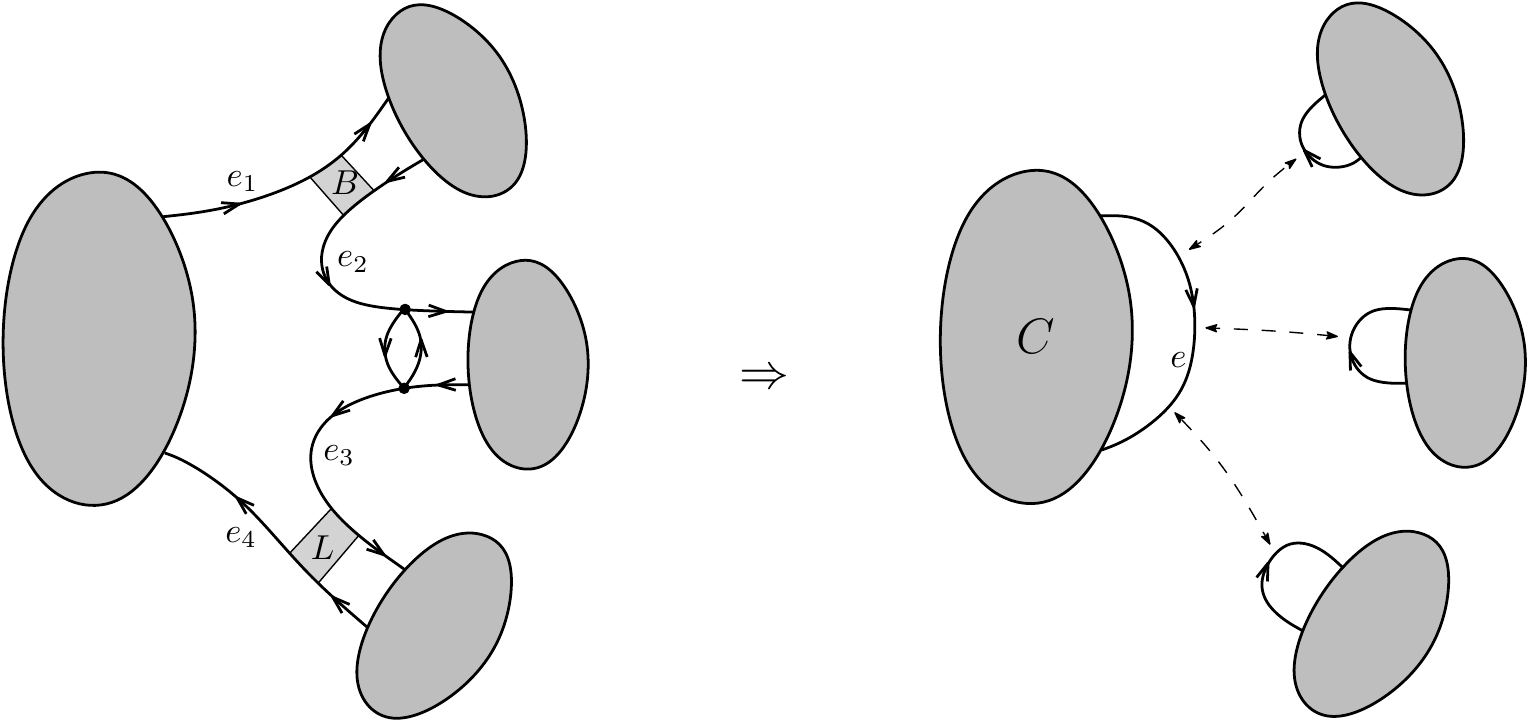}
\end{center}
\caption{For each marked edge $e$ of a component $C$ of $G'$ 
such that $e$ is involved in $t\geq 1$ connections, $e$ results from the merger of $t+1$ edges of $G'$ ($t=3$ in the example, $e$ results from the merger of $\{e_1,e_2,e_3,e_4\}$).} 
\label{fig:merger}
\end{figure}

Similarly as in~\cite{GS}, an important
remark is the following:
\begin{claim}\label{claim:deg_three}
An uncolored component $C$ of $T$ 
of degree $0$ and not containing the root has   adjacency order at least $3$.
\end{claim}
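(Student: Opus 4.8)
The plan is to exploit the fact that an uncolored component is untouched by the removals performed to pass from $G$ to $G'$, so that it sits unchanged inside the reduced scheme $G$, where the defining properties (melon-freeness and absence of proper chains) can be applied directly.

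First I would identify what an uncolored degree-$0$ component $C$ looks like. Since $C$ bears no colored edge, no removal of the first phase (the greedy removal of non-separating dipoles and chain-vertices, during which the two resulting edges are colored) took place inside $C$; hence $C$, together with its incident connections, appears unchanged as a subconfiguration of the reduced scheme $G$. Note also that $C$ carries no chain-vertex, since every chain-vertex of $G'$ is a connection and has therefore been removed in forming $T$. As $C$ has degree $0$, it is melonic \cite{DRT}; were it to carry a vertex, it would contain a tip melon, which, being internal to $C$, would also be a melon of $G$, contradicting melon-freeness. Therefore $C$ is the cycle-graph, i.e.\ a single loop $L$ carrying no vertex.

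Next I would read off the adjacency order $a$ of $C$ from the way $L$ meets the connections. Since $C$ is a node of the tree $T$ not containing the root, and $T$ is a tree, we have $a\geq 1$, and I then rule out $a\in\{1,2\}$. If $a=1$, the loop $L$ lies entirely on one side of a single connection $X$. If $X$ is a chain-vertex, then the two half-edges of $L$ are the two half-edges on one side of $X$, that is, $L$ is an edge adjacent to a chain-vertex, which is forbidden in the melon-free $G$. If $X$ is a dipole, then $L$ closes one of its two sides, so the two dipole edges together with $L$ form a triple edge carrying three faces of length $2$; a local inspection of the strand structure shows that these are one left, one right and one straight face, so the triple edge is a melon of $G$, again a contradiction. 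If $a=2$, the loop $L$ passes through exactly two connections $X_1$ and $X_2$, using both half-edges on the $C$-side of each, so $L$ consists of two arcs joining one side of $X_1$ to one side of $X_2$. These two arcs realize $\{X_1,X_2\}$ as a proper chain (a chain of two elements, whether dipoles or chain-vertices), present in $G$ since $C$ is untouched; this contradicts the defining property of a reduced scheme that it has no proper chain. Hence $a\notin\{1,2\}$, and the adjacency order of $C$ is at least $3$.

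The routine parts are the bookkeeping showing that an uncolored component is genuinely untouched, hence present in $G$, and that its loop meets each incident connection on a single side. The step I expect to be the main obstacle is the $a=1$ dipole sub-case: one must verify, by following the three strands through the dipole and the closing loop $L$ and using that the two half-edges at every corner carry opposite signs, that the resulting triple edge always has its three length-$2$ faces of the three distinct types, so that it is genuinely a melon rather than one of the non-melonic triple edges of Figure~\ref{fig:melon}.
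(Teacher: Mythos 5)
Your treatment of the cycle-graph cases ($a=1$ and $a=2$) is correct and is essentially the paper's argument for those cases. The gap is in your preliminary reduction, namely the step ``were $C$ to carry a vertex, it would contain a tip melon, which, being internal to $C$, would also be a melon of $G$, therefore $C$ is the cycle-graph.'' This is not valid, because the edges of $C$ include the \emph{marked} edges, and a marked edge is not an edge of $G$: it is a merger of several edges of $G'$ across removed connections, so in $G$ it is subdivided by dipoles or chain-vertices. A melon of $C$ is a melon of $G$ only if it avoids every marked edge, and this can fail. Concretely, take $C$ to be the quadruple-edge graph (two vertices joined by edges $e_1,e_2,e_3,e_4$ in cyclic order): its melons are exactly the four triples of cyclically consecutive edges, and \emph{two} marked edges, whether adjacent or opposite, already meet all four triples. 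Thus a quadruple-edge component with two marked edges --- which is precisely an adjacency-order-$2$ configuration that the claim must exclude --- contains no melon of $G$, and your argument yields no contradiction for it. Note also that your intermediate assertion is false as a statement, not merely unproven: reduced schemes do contain quadruple-edge components of degree $0$ (with three marked edges), for instance the inner nodes of the dominant schemes of Proposition~\ref{prop:structure_dominant}; so no correct argument can conclude that every uncolored degree-$0$ component is the cycle-graph.

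The paper avoids this by arguing with \emph{dipoles} (faces of length $2$) rather than melons, which are more robust to marking: in the quadruple-edge graph, hitting every melon takes only two marked edges, but hitting every face of length $2$ takes three. Using the structure of degree-$0$ MO-graphs (the cycle-graph, or the quadruple-edge graph with melons successively inserted), one checks that any such graph other than the cycle-graph with fewer than $3$ marked edges still has a dipole avoiding all marked edges. Such a dipole has both edges uncolored and untouched, so it is an uncolored dipole of $G'$, i.e.\ a connection --- contradicting the fact that $C$ is a connected component obtained after removing \emph{all} connections. (Alternatively, in the problematic quadruple-edge case one can observe that the surviving unmarked dipole, together with the connection sitting inside an adjacent marked edge, forms a proper chain of two elements in $G$, contradicting that $G$ is a reduced scheme.) Your proof needs this dipole-based step, or an equivalent treatment of the quadruple-edge case, to be complete.
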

\begin{proof}
The first unrooted MO-graphs of degree $0$ are the cycle-graph (no vertex) and the so-called ``quadruple-edge graph'' (the MO-graph whose underlying map is of genus $0$
 with two vertices connected by $4$ edges). 
And any other MO-graph of degree $0$ can be obtained from the quadruple-edge graph by successive melon insertions. This easily implies that, if $C$ is not the cycle-graph and has adjacency order smaller than $3$ (hence has less than $3$ marked
edges),  
then $C$ has a dipole not passing by any marked edge, a contradiction. 
If $C$ is the cycle-graph and has adjacency order $1$,
 then it clearly yields a melon in $G$, a contradiction.
Finally, if $C$ is the cycle-graph and has adjacency order $2$, then the two edges of $T$ at $C$ 
 ---each of which either corresponds to a separating dipole or to a chain-vertex of $G$--- 
together form a proper
chain (of two elements) of $G$, a contradiction.
\end{proof}
 
Denote by $n_0$ the number of uncolored components in $G'$ of degree $0$ and not containing the
root, $n_0'$ the number 
of other components of degree $0$ (colored or containing the root), and $n_+$ the number of components of positive degree. 
The degrees of the components add up to the degree of $G'$ (recall that, for separating removals,  the degree is distributed among the components), and each component of positive degree has degree at least $1/2$ , hence $n_+/2\leq \delta-q$. 
The components counted by $n_0$ have at least $3$ neighbours in $T$, hence 
$3n_0+n_0'+n_+\leq 2N(G')$. 
Since $T$ has at most $2q$ colored components, 
we have $n_0'\leq 2q+1$ (the $+1$ accounting for 
the component containing the root). And since $T$ is a tree, we have
$$
N(G')=n_0+n_0'+n_+-1,
$$ 
hence, if we eliminate $n_0$ using $3n_0+n_0'+n_+\leq 2N(G')$, we obtain
$$
N(G')\leq \frac{2}{3}(N(G')+n_0'+n_+)-1\leq \frac{2}{3}(N(G')+2q+1+2\delta-2q)-1,
$$
so that $N(G')\leq 4\delta-1$. Since $N(G)\leq N(G')+3\delta$, we conclude that $N(G)\leq 7\delta-1$. 

\subsection{Proof of Lemma~\ref{lem:bound2}}\label{sec:proof_lem_bound2}
We prove here Lemma~\ref{lem:bound2}, i.e., that  
for any $\delta\in\tfrac12\mathbb{Z}_+$ and  
$k\geq 1$, there exists some constant $n_{k,\delta}$ such that any connected unrooted 
MO-graph (with no chain-vertices) of degree $\delta$
with at most $k$ dipoles has at most $n_{k,\delta}$ vertices,  so that 
there are finitely many MO-graphs of degree $\delta$ 
with $k$ dipoles. The bound $n_{k,\delta}$ we obtain is linear in $k$ and $\delta$, with quite large multiplicative constants (we have not pushed to improve the bound, also to avoid making
the proof too complicated).  

The proof is a bit more technical than
the analogous statement for colored graphs in~\cite{GS}, because in the MO model, 
only straight faces have even length, while left and right faces are allowed to have odd lengths. 
Precisely, our proof strategy is to show 
in a series of claims that some properties (e.g. being incident to a dipole)
are satisfied only by a limited (bounded by a quantity depending only on $k$ and $\delta$) number of vertices. Hence if $G$ could be arbitrarily large, it would have vertices not safisfying these properties. As we will see, this would yield the contradiction that $G$ has to be reduced to a certain MO-graph with $6$ vertices. 

In all the proof, $G$ denotes a connected unrooted MO graph with no chain-vertices, of degree $\delta$ and
 with at most $k$ dipoles; $V,E,F$ are
respectively the numbers of vertices, edges, and faces of $G$. Moreover, for $p\geq 1$, $\Fsp$ is 
the number of straight faces of length $2p$. According to Remark~\ref{rk:after_def_dipole}, the only MO-graphs where the set of dipoles
is not equal to the set of faces of length $2$ are the two infinity graphs (both having one vertex). Hence we can assume that 
$G$ is not equal to these two graphs. Thus $F_s^{(1)}\leq k$, which gives:

\begin{claim}\label{claim:pf1}
There are at most $2k$ vertices of $G$ that are incident to a dipole. 
\end{claim}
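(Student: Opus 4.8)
The plan is to observe that the claim is an immediate counting consequence of the definition of a dipole together with the standing hypothesis that $G$ has at most $k$ dipoles. Recall that a dipole is a face of length $2$ incident to \emph{two distinct vertices}; thus each dipole of $G$ contributes exactly two vertices to the set of vertices that are incident to some dipole. The preceding discussion already ensures that, since $G$ is neither of the two infinity graphs, every face of length $2$ is a genuine dipole, so the dipoles of $G$ are exactly its faces of length $2$, and there are at most $k$ of them.

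First I would let $\mathcal{D}$ denote the set of all dipoles of $G$, so that $|\mathcal{D}|\leq k$ by hypothesis. For each $d\in\mathcal{D}$, write $V(d)$ for the (unordered) pair of vertices incident to $d$, so that $|V(d)|=2$ by the very definition of a dipole. The set of vertices of $G$ that are incident to at least one dipole is then
$$
\bigcup_{d\in\mathcal{D}}V(d),
$$
whose cardinality is at most $\sum_{d\in\mathcal{D}}|V(d)|=2|\mathcal{D}|\leq 2k$. This establishes the desired bound.

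There is essentially no obstacle here, the statement being a direct over-count; the only subtlety to keep in mind is that a single vertex may be incident to several distinct dipoles, but this phenomenon only makes the above union smaller, so the naive estimate $2|\mathcal{D}|$ remains a valid upper bound on the number of vertices incident to a dipole.
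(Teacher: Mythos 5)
Your proof is correct and is essentially the argument the paper intends: since $G$ has at most $k$ dipoles and each dipole is by definition incident to exactly two distinct vertices, the union bound gives at most $2k$ such vertices (the paper treats this as immediate after noting, via Remark~\ref{rk:after_def_dipole}, that $G$ is not an infinity graph). Your added remark about over-counting only strengthening the bound is fine and matches the paper's implicit reasoning.
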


\vspace{.2cm}

\noindent Also, Lemma~\ref{lem:straight_faces} directly implies 
$$
\sum_{p\geq 3}(p-2)\Fsp\leq 2\delta-2+k.
$$
Since $p-2\geq p/3$ for $p\geq 3$, the total length $L=\sum_{p\geq 3}2p\Fsp$ of straight faces of length larger than $4$ is bounded by  $12\delta-12+6k$, so that we obtain: 

\begin{claim}\label{claim:pf2}
There are at most $12\delta-12+6k$ vertices of $G$ incident to a straight face of length larger than $4$. 
\end{claim}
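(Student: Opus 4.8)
The length bound $L\le 12\delta-12+6k$ on $L=\sum_{p\ge 3}2p\Fsp$, the total length of the straight faces of length larger than $4$, has just been obtained; the plan is to convert this into the desired vertex bound by a direct incidence count. The one fact I would isolate is that, for a straight face, its length counts exactly the number of internal strand-segments it is composed of, and each such segment sits at a single vertex. This is the same book-keeping already used in Lemma~\ref{lem:straight_faces} via the identity $\sum_{p\ge 1}2p\Fsp=E=2V$: a straight face of length $2p$ runs through $2p$ edges and therefore visits $2p$ vertices (counted with multiplicity), so it is incident to at most $2p$ distinct vertices.

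From here the argument is immediate. Each straight face of length larger than $4$, say of length $2p$ with $p\ge 3$, is incident to at most $2p$ distinct vertices. Summing the per-face vertex counts over all such faces gives the upper bound $\sum_{p\ge 3}2p\Fsp=L$, and since any vertex incident to at least one straight face of length larger than $4$ is counted at least once in this sum, the number of such vertices is at most $L$, hence at most $12\delta-12+6k$, as claimed.

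There is no genuine obstacle here beyond a mild over-counting that I would handle by phrasing the bound through faces rather than tracking vertices directly: a single long straight face may revisit a vertex, and a vertex, carrying two internal strand-segments, may lie on two distinct long straight faces. Both phenomena are harmless for the inequality, since summing the per-face vertex counts can only over-count, and an over-count still yields a valid upper bound by $L$. Consequently no finer information on how straight faces overlap at a vertex is required, and the claim follows from the already-established estimate on $L$.
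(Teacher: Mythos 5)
Your proof is correct and follows the paper's own route: the paper derives the bound $L=\sum_{p\geq 3}2p\Fsp\leq 12\delta-12+6k$ from Lemma~\ref{lem:straight_faces} (together with $F_s^{(1)}\leq k$) and then treats the claim as immediate, exactly via the counting you make explicit — a straight face of length $2p$ visits at most $2p$ distinct vertices, so the total length $L$ bounds the number of incident vertices. Your write-up merely spells out this last over-counting step, which the paper leaves implicit.
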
 

A cycle is called \emph{self-intersecting} if it passes twice by a same vertex. 

\begin{figure}
\begin{center}
\includegraphics[width=10cm]{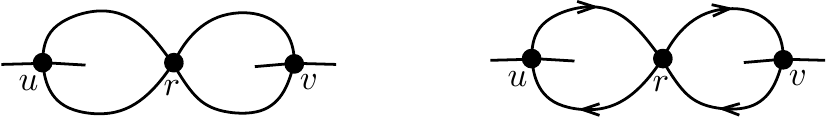}
\end{center}
\caption{On the left the unique map configuration for a self-intersecting straight face of length $4$ 
that does not create a loop.
On the right we see that it is not possible to orient the edges so as to satisfy the local condition
at the vertices $u$ and $v$ if the local condition is satisfied at $r$.}
\label{fig:self_intersect}
\end{figure}

\begin{claim}\label{claim:pf3}
There are at most $2\delta$ self-intersecting straight faces of length $4$ in $G$.  
Hence there are at most $6\delta$ vertices incident to such faces. 
\end{claim}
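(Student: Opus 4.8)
The plan is to show that a self-intersecting straight face of length $4$ forces the presence of a loop, and then to invoke the loop bound of Lemma~\ref{lem:remove_loop}. First I would set up the local combinatorics. A straight face of length $4$ is a closed walk $v_0\xrightarrow{f_1}v_1\xrightarrow{f_2}v_2\xrightarrow{f_3}v_3\xrightarrow{f_4}v_0$ along internal strands, and at each vertex an internal strand connects the two opposite half-edges (the middle strand goes straight through). Being self-intersecting means two of the $v_i$ coincide; say $v_0=v_2=r$ (the case $v_1=v_3$ is symmetric). I distinguish two situations: a \emph{consecutive} coincidence ($v_i=v_{i+1}$ for some $i$) and a \emph{purely antipodal} coincidence. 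In the consecutive case the edge $f_{i+1}$ joins a vertex to itself, i.e. it is a loop, and the straight face traverses this loop along its (unique) internal strand.

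Next I would rule out the purely antipodal case, which is exactly the unique non-loop map configuration depicted in Figure~\ref{fig:self_intersect}. Here the four edges $f_1,f_2,f_3,f_4$ are all incident to $r$, with $\{f_1,f_2\}$ forming a double edge from $r$ to $v_1$ and $\{f_3,f_4\}$ a double edge from $r$ to $v_3$ (the quadruple-edge sub-case $v_1=v_3$ is handled identically). The straight passages at $r$ connect opposite half-edges, so $f_1$ is opposite $f_4$ and $f_2$ is opposite $f_3$ around $r$; in particular $f_1$ and $f_2$ sit at adjacent positions around $r$. Now I examine orientability: at $v_1$ the face makes a straight passage through $f_1$ and $f_2$, which forces $f_1$ and $f_2$ to carry the same direction at $v_1$, hence opposite directions at the common endpoint $r$. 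But in an admissible orientation the two half-edges connected by a straight passage at $r$ (opposite half-edges) carry the same direction, so $f_1$ and $f_2$, lying in the two distinct direction-classes at $r$, must carry opposite directions at $r$ --- a contradiction. Therefore the purely antipodal configuration cannot occur in an MO-graph, which is the content of the right part of Figure~\ref{fig:self_intersect}.

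It follows that every self-intersecting straight face of length $4$ has a consecutive coincidence, and hence traverses a loop along that loop's internal strand. Since each internal strand belongs to exactly one straight face, assigning to each such face one of the loops it traverses gives an injection into the set of loops of $G$. By Lemma~\ref{lem:remove_loop} there are at most $2\delta$ loops, so there are at most $2\delta$ self-intersecting straight faces of length $4$. Finally, a closed walk of length $4$ that is self-intersecting visits at most $3$ distinct vertices, so the total number of vertices incident to such faces is at most $3\cdot 2\delta=6\delta$.

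I expect the main obstacle to be the orientability argument for the non-loop case: one must carefully combine the internal-strand structure with the admissibility condition (that opposite half-edges at a vertex carry the same direction) to extract the sign clash at $r$ from the double edge. The bookkeeping of the injection into loops is then routine, since each internal strand lies in a single straight face, and the vertex count follows immediately.
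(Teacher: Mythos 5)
Your proof is correct and takes essentially the same approach as the paper: the paper likewise argues (deferring the details to Figure~\ref{fig:self_intersect}) that the unique non-loop map configuration for a self-intersecting straight face of length $4$ admits no admissible orientation, so every such face contains a loop, and then combines the bound of at most $2\delta$ loops from Lemma~\ref{lem:remove_loop} with the fact that such faces visit at most $3$ vertices. The only caveat is a wording slip in your orientation argument: ``hence opposite directions at the common endpoint $r$'' must be read as ``each of $f_1,f_2$ has its in/out status at $r$ flipped relative to its status at $v_1$'' --- so $f_1$ and $f_2$ still agree with each other at $r$, which is precisely what contradicts the admissibility constraint at $r$ (where the straight passages pair $f_1$ with $f_4$ and $f_2$ with $f_3$, forcing $f_1$ and $f_2$ into opposite classes).
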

\begin{proof}
As we see in Figure~\ref{fig:self_intersect}, a self-intersecting straight face of length $4$ must 
contain a loop, which yields the first statement according
to Lemma~\ref{lem:remove_loop} (there are at most $2\delta$ loops). 
The second statement just follows from the fact that these cycles 
 (due to the 
self-intersection) have at most $3$ vertices. 
\end{proof}

Given a non-self-intersecting cycle $C$ of $G$, a \emph{chordal path} for $C$ is a path that starts from $C$, ends at $C$ (possibly at the same vertex), 
and is outside of $C$ inbetween. In the context of maps, we call a chordal path \emph{faulty}
if it starts on one side of $C$ and ends on the other side of $C$. A cycle $C$ is called \emph{faulty}
if it has a faulty chordal path, and, for $k\geq 1$, is called \emph{$k$-faulty} if it has a chordal path of length at most $k$. For $C$ a faulty cycle, we denote by $\wC$ the union of $C$ and of a   (canonically chosen) shortest possible faulty chordal path of $C$; $\wC$ is called the \emph{faulty extension} of~$C$.  

\begin{claim}
Let $d:=96768$.  
There are at most $14d\delta$ 
4-faulty cycles of length $4$ in $G$. Hence there are at most $56d\delta$ vertices
incident to such cycles. 
\end{claim}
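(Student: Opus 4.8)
The plan is to reduce the count to a genus estimate via a localization-and-covering argument. First I would record the size of a faulty extension: for a 4-faulty cycle $C$ of length $4$, a shortest faulty chordal path $P$ has length at most $4$, so the faulty extension $\wC=C\cup P$ is a \emph{connected} subgraph on at most $7$ vertices (four from $C$, and at most three interior vertices of $P$). The overall strategy is then: (i) extract a \emph{maximal} family $C_1,\dots,C_t$ of 4-faulty 4-cycles whose faulty extensions $\wC_1,\dots,\wC_t$ are pairwise vertex-disjoint; (ii) bound $t$ by the genus, hence by $\delta$; and (iii) show that every 4-faulty 4-cycle is ``trapped'' near the bounded vertex set $\bigcup_i\wC_i$, with bounded multiplicity per vertex.

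For step (ii) I would work on the orientable surface $\Sigma$ of genus $g=g_{\ell,r}\le\delta$ carrying the $4$-regular map $G$ (whose faces are the left and right faces). The key geometric input is that a faulty cycle is necessarily non-contractible: if $C$ bounded a disc, then a chordal path leaving $C$ on one side would, by a Jordan-type argument in the relevant complementary region, be forced to return on the \emph{same} side, contradicting faultiness. Given the maximal disjoint family, I would attach to each $C_i$ a transversal closed curve $\gamma_i$ obtained by closing up $P_i$ near $C_i$; since $P_i$ joins the two sides of $C_i$, the mod-$2$ intersection number $C_i\cdot\gamma_i$ equals $1$, while vertex-disjointness of the extensions gives $C_i\cdot\gamma_j=0$ for $i\ne j$. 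Pairing $\sum_{i\in S}[C_i]$ against $\gamma_j$ then forces the classes $[C_1],\dots,[C_t]$ to be linearly independent in $H_1(\Sigma;\mathbb{Z}/2)$, a space of dimension $2g$. Hence $t\le 2g\le 2\delta$, so that $\bigcup_i\wC_i$ spans at most $7t\le 14\delta$ vertices.

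For step (iii), maximality of the family shows that every 4-faulty 4-cycle $C$ has $\wC$ meeting $\bigcup_i\wC_i$ (otherwise $C$ could be adjoined to the family, a contradiction). Thus $\wC$ contains at least one of the at most $14\delta$ distinguished vertices $w$. Since $\wC$ is connected on at most $7$ vertices it lies within the ball of radius $6$ about $w$, and in a $4$-regular map the number of connected subgraphs on at most $7$ vertices through a fixed vertex is bounded by an absolute constant; in particular, the number of 4-faulty 4-cycles $C$ with $w\in\wC$ is at most $d=96768$. Summing over the at most $14\delta$ distinguished vertices yields at most $14d\delta$ such cycles, and since each has exactly $4$ vertices, at most $56d\delta$ vertices are incident to such cycles.

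The step I expect to be the main obstacle is making the homological argument of step (ii) fully rigorous inside the purely combinatorial map model: one must construct the transversals $\gamma_i$ directly from the faulty paths, verify cleanly that $C_i\cdot\gamma_j$ equals $1$ when $i=j$ and $0$ otherwise (together with the non-contractibility coming from the planar-impossibility argument), and confirm that vertex-disjointness of the extensions really does yield homological independence rather than merely individual non-separation. The remaining quantitative inputs---the bound $t\le 2\delta$ and the local multiplicity $d$---are then routine, the latter being a deliberately crude local enumeration, which accounts for the large, unoptimized value of the constant.
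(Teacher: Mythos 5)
Your proposal is correct, and it follows the same skeleton as the paper's proof: bound the number of pairwise vertex-disjoint faulty extensions by $2\delta$ using the topology of the underlying surface, then convert this into a global count via the local bound that each vertex lies in at most $d$ extensions (each extension having at most $7$ vertices), giving $14d\delta$ cycles and hence $56d\delta$ incident vertices. The one substantive difference is how the disjointness bound is established. The paper argues via graph minors: each faulty extension $\wC$ is a topological minor of genus $1$, so $s$ vertex-disjoint extensions give a topological minor of genus $s$, while the genus of $G$ is at most $2\delta$; the additivity of genus over disjoint extensions is left informal (``clearly''). You instead make this step rigorous by homological intersection theory on the surface $\Sigma$ of genus $g=g_{\ell,r}\le\delta$ carrying the $4$-regular map: the transversal curves $\gamma_i$ obtained by closing up the faulty paths satisfy $C_i\cdot\gamma_i=1$ and $C_i\cdot\gamma_j=0$ for $i\ne j$ (mod $2$), forcing the classes $[C_i]$ to be linearly independent in $H_1(\Sigma;\mathbb{Z}/2)$, a space of dimension $2g\le 2\delta$. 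This buys precision --- it also subsumes your separate non-contractibility remark, since a null-homologous curve has zero mod-$2$ pairing with every closed curve --- at the price of invoking surface homology, whereas the paper's minor-based argument is shorter but sketchier. Your endgame (a maximal disjoint family whose at most $14\delta$ vertices must be met by every extension, each vertex lying in at most $d$ extensions) is an equivalent and slightly cleaner form of the paper's greedy/incremental argument, and your treatment of the local constant $d=96768$ as a deliberately crude, unoptimized enumeration matches the paper's, which likewise justifies it only as ``an easy calculation''.
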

\begin{proof}
For $C$ a faulty cycle, $\wC$ is clearly a topological minor of $G$ of genus $1$. 
Hence, if we have $s\geq 1$ vertex-disjoint faulty extensions of faulty cycles, these yield a topological
minor of genus $s$. Since the genus of $G$ is bounded by $2\delta$, we conclude that there can not
be more than $2\delta$ vertex-disjoint faulty extensions of faulty cycles. 
Let $\cE$ be the set of 4-faulty cycles of length $4$ of $G$, and let $\widehat{\cE}:=\{\wC,\ C\in\cE\}$
(where we distinguish the cycle from the faulty path in every faulty extension, so that $|\cE|=|\widehat{\cE}|$). 
Any $\wC\in\widehat{\cE}$ has at most $7$ vertices, and an easy
calculation ensures that any given vertex $v$ of $G$ belongs to at most $d:=96768$ elements
of $\widehat{\cE}$ (this can probably be improved, but we just aim at a certain fixed bound).   Hence $\wC$ can intersect at most $7d$ other elements of $\widehat{\cE}$. 
We easily conclude that $|\cE|=|\widehat{\cE}|\leq 14d\delta$    (otherwise one could construct incrementally a subset of $2\delta\!+\!1\ $ vertex-disjoint elements of $\widehat{\cE}$). 
\end{proof}

Let $\Vexc$ be the set of vertices of $G$ that are either incident to a dipole, or incident to a straight face of length larger than $4$, or incident to a self-intersecting straight face of length $4$, 
or incident to a 4-faulty cycle of length $4$. The four claims above give
\begin{equation}
|\Vexc|\leq (18+56d)\delta+8k-12=:h.
\end{equation}  

\begin{figure}
\begin{center}
\includegraphics[width=8cm]{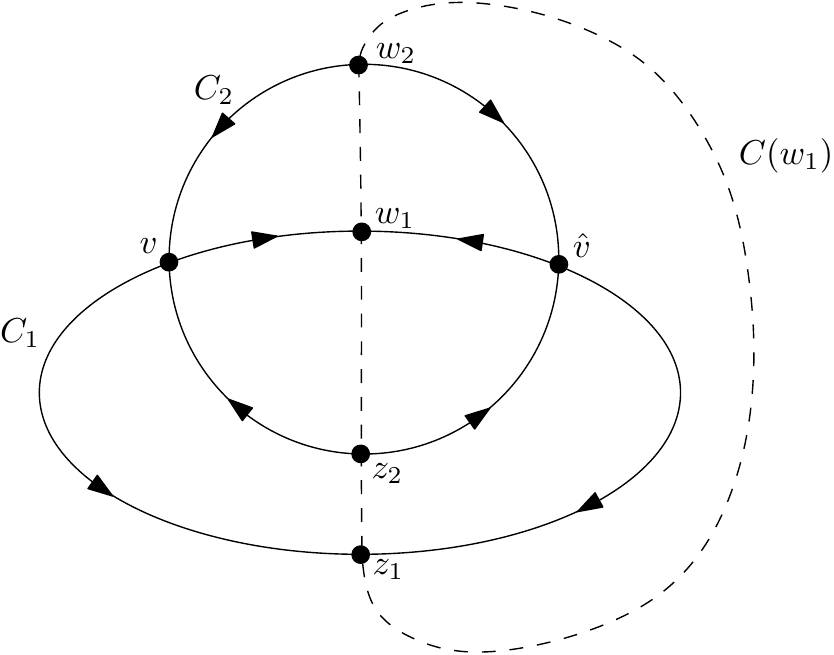}
\end{center}
\caption{The situation in the proof of Lemma~\ref{lem:bound2}}
\label{fig:cycles}
\end{figure}

And let $V_0$ be the set of vertices in $\Vexc$
or adjacent to a vertex in $\Vexc$. Since $G$ is $4$-regular, $|V_0|\leq 5h$. 
We are now going to show that if $G$ has a vertex $v$ not in $V_0$,
then $G$  is actually reduced to having only $6$ vertices. 
Since $v\notin \Vexc$, it is incident to two non-intersecting 
straight faces $C_1$ and $C_2$, both of length $4$. 
Since $C_1$ is not faulty, it has to meet $C_2$ at a vertex different 
from $v$ (otherwise $C_2$ would be a faulty chordal path for $C_1$).   
Let $\hat{v}$ be an intersection of $C_1$ and $C_2$ different from $v$. 
For $i\in\{1,2\}$, let $d_i$ be the distance of $\hat{v}$ from $v$ on $C_i$ 
(note that $d_i\in\{1,2\}$).  
If $d_1=d_2=1$ then either the two corresponding edges $e_1,e_2$  of $C_1$ and $C_2$ form 
an external dipole, or otherwise $e_2$ is a faulty chordal edge for $C_1$. 
Both cases are excluded since $v\notin\Vexc$, 
hence $d_1=d_2=2$. We
write the cycle $C_1$ as $(v,w_1,\hat{v},z_1)$ and the cycle $C_2$ as $(v,w_2,\hat{v},z_2)$,  see Figure~\ref{fig:cycles}.
Let $C(w_1)$ be the straight face (non-intersecting and of length $4$, since $w_1\notin \Vexc$)  passing by $w_1$
and different from $C_1$. 
Since $w_1\notin \Vexc$, $C(w_1)$ is not faulty, hence it 
intersects the 4-cycle $(v,w_1,\hat{v},w_2)$ at a 
vertex different from $w_1$; this other intersection must be at $w_2$ because $v$ and $\hat{v}$
have their $4$ incident edges already in $C_1\cup C_2$. Similarly $C(w_1)$ intersects 
the 4-cycle $(v,w_1,\hat{v},z_2)$ at a vertex different from $w_1$, and this other intersection must
be $z_2$; and $C(w_1)$ intersects 
the 4-cycle $C_1$ at a vertex different from $w_1$, and this other intersection must
be $z_1$, see Figure~\ref{fig:cycles}. Hence 
 $G'=C_1\cup C_2\cup C(w_1)$ forms a connected induced subgraph
that saturates all its vertices (each of the $6$ vertices of $G'$ is incident to $4$ edges of $G'$). 
Hence $G'=G$. In other words, if $G$ has a vertex outside of $V_0$, then $G$ has $6$ vertices.

We can now easily conclude the proof. Define $n_{k,\delta}:=\mathrm{max}(6,5h)$, and assume
$G$ has more than $n_{k,\delta}$ vertices. Since $|V_0|\leq 5h$, $G$ has a vertex not in $V_0$,
which implies that $G$ is reduced to a graph with $6$ vertices, 
giving a contradiction.

\section{MO-graphs of degree $1/2$}\label{sec:schemes_deg_one_half}
In this section we recover the results of~\cite{RT} about the structure of MO-graphs of degree $1/2$,
using ingredients from the preceding sections. 

\begin{prop}\label{prop:deg_one_half}
The only unrooted connected melon-free MO-graphs of degree $1/2$ are the cw and the ccw infinity graph. 
\end{prop}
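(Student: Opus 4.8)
The plan is to combine the structural bounds from the finiteness machinery with the degree-$1/2$ constraints, so as to pin down the vertex count directly. By Corollary~\ref{coro:removals}, an MO-graph of degree $1/2$ has no non-separating dipole and no non-separating chain-vertex; since a melon-free connected MO-graph has no chain-vertices and no melons, I would first argue that such a graph has \emph{no dipole at all}. Indeed, in a connected melon-free MO-graph, any dipole would have to be separating (by the corollary), but a separating dipole in a \emph{connected} graph is impossible unless one of the two resulting pieces is the cycle-graph, which only happens when two half-edges on one side of the dipole belong to a single edge (a loop). So the only dipoles that can survive are those whose removal detaches a cycle-graph, i.e., dipoles sitting next to a loop; I would check by a short case inspection on the three dipole types $L,R,S$ that such a configuration either forces a melon (excluded) or forces the graph to be one of the two infinity graphs.

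Next I would invoke Lemma~\ref{lem:straight_faces}: for a connected MO-graph of degree $\delta=1/2$ and genus $g$, we have $\Lambda=2\delta-4g=1-4g$. Since $\Lambda=\sum_{p\ge 1}(p-2)F_s^{(p)}+2$ is an integer and $g\in\mathbb{Z}_+$, positivity considerations force $g=0$ and $\Lambda=1$, hence $V-2F_s+2=1$, i.e.\ $V=2F_s-1$. In particular $V$ is \emph{odd}. Combined with $\sum_{p\ge1}pF_s^{(p)}=V$ and $\Lambda=\sum_{p\ge1}(p-2)F_s^{(p)}+2=1$, I get $\sum_{p\ge1}(p-2)F_s^{(p)}=-1$, which (since $p-2\ge -1$ with equality only at $p=1$) forces $F_s^{(1)}=1$ and $F_s^{(p)}=0$ for all $p\ge 3$, with $F_s^{(2)}$ free. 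So all straight faces have length $2$ except for exactly one straight loop (length-$2$ face of length parameter $p=1$, i.e.\ a self-loop on the internal strand).

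\textbf{The main obstacle} is translating these face-length data into the claim that the whole graph is an infinity graph. A straight face of length $2$ that is incident to two distinct vertices would be an $S$-dipole, which I have already excluded; so every length-$2$ straight face must be a loop incident to a single vertex. Thus every straight face is a loop, meaning the internal strands close up immediately at each vertex. I would argue that this, together with $4$-regularity and the admissible orientation, forces every vertex to carry an internal self-loop, and then a short local analysis of how the external (left/right) strands must close shows the graph degenerates to a single vertex with two loops. Finally I would use Lemma~\ref{lem:remove_loop}: each loop removal drops the degree by $1/2$, so a degree-$1/2$ graph has exactly one loop in the sense of that lemma after accounting for orientation — here the cw/ccw infinity graphs are precisely the two admissible orientations, and their degree is $1/2$ as noted in Figure~\ref{fig:infinity}. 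Matching the orientation data (clockwise versus counterclockwise closure of the external strands) distinguishes exactly the two infinity graphs, completing the identification.

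I expect the genuinely delicate point to be the local strand argument in the last paragraph: one must be careful that ``every straight face is a length-$2$ loop'' genuinely forces a single vertex rather than, say, a chain of vertices each bearing a straight loop. The resolution is that any two distinct vertices would have to be joined by external strands forming a left or right face, and a melon-free, dipole-free graph cannot support such a configuration on more than one vertex without creating an excluded dipole or exceeding degree $1/2$ — so the vertex-disjointness and degree bookkeeping of Lemma~\ref{lem:removals} closes the argument.
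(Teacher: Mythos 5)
Your proposal contains a genuine gap, and it sits exactly where the real work of the proposition lies. Your first step rests on the claim that ``a separating dipole in a connected graph is impossible unless one of the two resulting pieces is the cycle-graph.'' This is false: you have inverted the paper's remark that a separating removal \emph{might} detach a cycle-graph into a necessity. For a counterexample, take two infinity graphs, cut open one loop of each, and join them through the two sides of an S-dipole: this is a connected MO-graph (of degree $1$) in which the dipole is separating and neither resulting component is a cycle-graph; more generally, the dominant schemes of Section~\ref{sec:dominant} are built entirely from separating connections between non-trivial components. What actually kills the separating dipole $d$ in the melon-free, degree-$1/2$ setting is the melonic structure of degree-$0$ graphs: the degrees of the two components created by removing $d$ add up to $1/2$, so one component $G_1$ has degree $0$; rooting $G_1$ at the edge created by the reconnection, it is a rooted melonic graph (rooted MO-graphs of degree $0$ are exactly the melonic ones), hence it is either the cycle-graph --- in which case $d$ was part of a melon of $G$ --- or it contains a melon avoiding the root, which is then a melon of $G$; either way melon-freeness is contradicted. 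This appeal to ``degree $0$ $=$ melonic'' is the key ingredient of the paper's proof, and your proposal never invokes it; the closing appeal to ``the degree bookkeeping of Lemma~\ref{lem:removals}'' does not substitute for it.

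There are also two secondary errors. First, from $\sum_{p\geq 1}(p-2)F_s^{(p)}=-1$ you cannot conclude that $F_s^{(1)}=1$ and $F_s^{(p)}=0$ for all $p\geq 3$ (take $F_s^{(1)}=2$, $F_s^{(3)}=1$); the only valid conclusion is $F_s^{(1)}\geq 1$, which is all that is needed. Second, once you know that $G$ has a straight face of length $2$ but no S-dipole, you are already done: by Remark~\ref{rk:after_def_dipole}, a face of length $2$ incident to a single vertex forces that vertex's four half-edges to form two loops, i.e.\ forces the connected graph to \emph{be} an infinity graph. So the ``delicate local strand argument'' you anticipate in your last paragraph concerns a configuration (several vertices each carrying a straight loop) that cannot exist at all; its presence signals a conflation of length-$2$ straight faces, which in a non-infinity graph are S-dipoles between two distinct vertices, with self-loops. (Minor point: the cleanest way to get $g=0$ is not ``positivity considerations'' but the bound $g\leq\delta$ coming from $\delta=g_{\ell,r}+g_{\ell,s}+g_{r,s}$.)
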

\begin{proof}
Assume there is a connected melon-free MO-graph $G$ of degree $1/2$ different from the cw or the ccw infinity graph. According to Remark~\ref{rk:after_def_dipole} 
the set of dipoles of $G$ equals the set of its faces of length $2$.  
Note that in degree $1/2$, the genus $g$ has to be $0$ (since $g$ is a nonnegative  integer bounded by $\delta$). 
Hence Lemma~\ref{lem:straight_faces} gives 
$-F_s^{(1)}+\sum_{p\geq 3}(p-2)\Fsp=-1$, so that $F_s^{(1)}\geq 1$, i.e., $G$ has a dipole $d$  (of type $S$). 
Since $\delta<1$, $d$ has to be separating according to Corollary~\ref{coro:removals}. 
Let $G_1$ and $G_2$ 
be the connected components resulting from the removal of $d$. Since $d$ is separating, the degrees of $G_1$ and $G_2$ add up to $1/2$, hence one has degree $0$ and the other has degree $1/2$. 
By convention let  $G_1$ be the one of degree $0$, considered as rooted at the edge resulting from the deletion of $d$. There are two cases: (i) if $G_1$ is the cycle-graph, then 
 $d$ is part of a melon of $G$, giving a contradiction; (ii) if $G_1$ is not the cycle-graph,
then it has a melon (since rooted MO-graphs of degree $0$ are melonic), giving again a contradiction.   
\end{proof}

This yields the following bijective result, where we recover~\cite[Theo.~3.1]{RT}:
\begin{coro}
Every unrooted connected MO-graph of degree $1/2$ is uniquely obtained from the cw or the ccw infinity graph where each of the two edges is substituted by a rooted melonic graph.
\end{coro}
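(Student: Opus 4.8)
The plan is to combine the melon-free core decomposition with the classification of melon-free degree-$1/2$ MO-graphs just obtained. First I would take an arbitrary unrooted connected MO-graph $G$ of degree $1/2$ and perform a maximal greedy sequence of melon removals. Each such removal preserves the degree (as recorded in the proof of Proposition~\ref{prop:melon_free}), and it preserves connectivity, since removing a melon deletes its two vertices together with its three edges and reconnects the two external half-edges into a single edge. Thus the resulting melon-free MO-graph $H$ is connected and of degree $1/2$, so by Proposition~\ref{prop:deg_one_half} it is either the cw or the ccw infinity graph.

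For existence I would then reverse the reduction: the inverse of a melon removal is a melon insertion at an edge, and reinserting all the removed melons builds up, on each of the two loops of $H$, a rooted MO-graph obtained from a single edge by successive melon insertions, that is, a rooted melonic graph (the cycle-graph if no melon was reinserted there, following the convention of Section~\ref{sec:extract_melon_free_core}). This exhibits $G$ as the cw or ccw infinity graph in which each of the two edges has been substituted by a rooted melonic graph, which is precisely the claimed form.

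For uniqueness I would argue exactly as in Proposition~\ref{prop:melon_free}: any maximal greedy sequence of melon removals from $G$ must progressively shell the melonic decorations carried by the two loops, hence terminates at the same core $H$ and recovers the same rooted melonic graphs on its edges. The only delicate point, which I expect to be the main obstacle, is the symmetry of the infinity graph. Its unique degree-$4$ vertex carries four half-edges whose signs alternate $+,-,+,-$ in cw order, and the rotation sending each half-edge to the one two positions further along is an automorphism preserving both the signs and the cw/ccw orientation while exchanging the two loops. A short inspection of these four half-edges shows that this rotation generates the full orientation-preserving automorphism group, so that the two edges of $H$ are interchangeable and no further identifications occur. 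Consequently the decomposition is unique precisely up to this exchange; equivalently, it is the unordered pair of rooted melonic graphs that is determined by $G$.
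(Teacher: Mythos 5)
Your proof is correct and takes essentially the same route as the paper: recycle the extraction of the unique melon-free core from Proposition~\ref{prop:melon_free} in the unrooted connected setting (melon removals preserve degree and connectivity), then identify the core via Proposition~\ref{prop:deg_one_half} as the cw or ccw infinity graph. Your extra observation about the order-two automorphism of the infinity graph exchanging its two loops is accurate and is a reasonable clarification of what ``uniquely obtained'' means for unlabelled objects (the unordered pair of melonic decorations is what is determined); the paper leaves this point implicit.
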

\begin{proof}
All the arguments in the proof of Proposition~\ref{prop:melon_free} (extraction of a unique melon-free core) can be directly recycled  to give the following statement:
``for $\delta\in\tfrac12\mathbb{Z}_+^*$, every unrooted connected MO-graph of degree $\delta$ is
uniquely obtained as an unrooted connected 
melon-free MO-graph of degree $\delta$ where each edge
is substituted by a rooted melonic graph'', which ---together with Proposition~\ref{prop:deg_one_half}--- 
yields the result.
\end{proof}

Another direct consequence of Proposition~\ref{prop:deg_one_half} is the following statement that will prove
useful in the next section:

\begin{coro}\label{coro:dipole12}
Every rooted connected MO-graph of degree $1/2$ different from the (rooted) cw or ccw infinity
graph has a dipole. 
\end{coro}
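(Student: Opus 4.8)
The plan is to reduce to the unrooted classification just obtained. First I would \emph{unroot} $G$, i.e.\ erase the fake root-vertex and merge the two halves of the root-edge, producing an unrooted connected MO-graph $G_u$; since rooting is only a marking (the root-vertex is fake and does not count in $V$), $G_u$ still has degree $1/2$. The corollary preceding this statement then applies to $G_u$: it is obtained from the cw or the ccw infinity graph $I$ by substituting each of the two loops of $I$ by a rooted melonic graph, say $M_1$ and $M_2$.

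I would then distinguish two cases according to whether these substitutions are trivial. If both $M_1$ and $M_2$ are the rooted cycle-graph, then $G_u=I$, so $G$ is a rooting of the cw or ccw infinity graph; but any such rooting is precisely the rooted cw or ccw infinity graph, which is excluded by hypothesis. Hence at least one of the two substitutions, say $M_1$, is a nontrivial rooted melonic graph.

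The main step is to extract a dipole from $M_1$. Being nontrivial, $M_1$ is built from the cycle-graph by at least one melon insertion, hence contains a melon $(e_1,e_2,e_3)$: a triple edge joining two distinct vertices whose three pairs bound three faces of length $2$ (of types $L$, $R$, $S$). After substitution these remain genuine edges of $G$ between two distinct vertices, so the three length-$2$ faces persist in $G$. The root-vertex of $G$ lies on a single edge, hence on at most one of $e_1,e_2,e_3$, so it can spoil at most two of the three faces; the remaining face is a length-$2$ face incident to two distinct vertices and not passing by the root, i.e.\ a dipole of $G$. This proves the statement.

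The only point requiring care, which I expect to be the main obstacle, is the bookkeeping around the root: after substitution the root of $G$ may well fall inside $M_1$, so one cannot simply invoke the fact that a melon avoids the root-edge by definition. This is exactly why I would argue from the unrooted corollary rather than from the melon-free core of the rooted $G$ directly: doing so confines every root-dependent consideration to the elementary observation that the root can obstruct at most two of the melon's three length-$2$ faces, leaving at least one genuine dipole.
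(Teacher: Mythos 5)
Your proof is correct and takes essentially the same route as the paper's: both unroot $G$, use the degree-$1/2$ classification to produce a melon in the unrooted graph (the paper applies Proposition~\ref{prop:deg_one_half} directly in contrapositive form, while you pass through its corollary and extract the melon from a nontrivial melonic substitution), and conclude by noting that the root can lie on at most one of the melon's three edges, so at least one of its three length-$2$ faces survives as a dipole. The difference is cosmetic; your root-bookkeeping step simply spells out what the paper compresses into ``hence there is a dipole not including the root edge.''
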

\begin{proof}
According to Proposition~\ref{prop:deg_one_half}, 
the underlying unrooted MO-graph (obtained by erasing
the fake-vertex of degree $2$) has a melon, hence there is a dipole not including the root edge;
by definition this dipole is also a dipole of the rooted MO-graph (recall that dipoles of rooted MO-graphs are required not to pass by the root). 
\end{proof}

\section{Dominant schemes}\label{sec:dominant}
A reduced scheme $S$ of degree $\delta\in\tfrac12\mathbb{Z}_+$ is called \emph{dominant} if it 
maximizes (over reduced schemes of degree $\delta$) 
the number $b$ of broken chain-vertices  (as we will see in Section~\ref{sec:gen_funct},  
$b$ determines the singularity order of the generating function of rooted MO-graphs of degree $\delta$ and reduced scheme $S$, the larger $b$ the larger the singularity order). 
In this section we show that, in each degree $\delta$,
 the maximal number of broken chain-vertices is $4\delta-1$, and we precisely determine
what are the dominant schemes. 

Let $S$ be a reduced scheme of degree $\delta$, and let $b$ be the number of broken chain-vertices of $S$.  
We now determine a bound for $b$ in terms of $\delta$. The discussion is very similar to the one of
Section~\ref{sec:proof_lem_bound1}, with the difference that only the broken chain-vertices are removed.
First, as long as there is a 
non-separating broken chain-vertex, we remove it (as shown in Figure~\ref{fig:deletion}) and color 
the two resulting edges. Let $q$ be the number
of such removals until all broken chain-vertices are separating; at this stage, denote by $S'$ the resulting MO-graph with chain-vertices, which has degree $\delta'=\delta-3q$ according to Lemma~\ref{lem:removals}, and has at most $2q$ colored edges.  Then $S'$ can be seen as a tree $T$ 
of components that are obtained by removing all separating broken chain-vertices.  
It is also convenient to consider the tree $\hat{T}$ 
of components that are obtained by removing all separating chain-vertices and all separating uncolored dipoles ---dipoles with no colored edge--- of $S'$ (in $T$ and $\hat{T}$ some components
of degree $0$ might be cycle-graphs).  Note that $\hat{T}$ is a refinement of $T$, i.e., each component $C$ of $T$ ``occupies'' a subtree $T_C$ of $\hat{T}$.   
For a component $C$ of $T$ (resp. of $\hat{T}$), the \emph{adjacency order} of $C$
is defined as the number of adjacent components in $T$ (resp. in $\hat{T}$). 
An edge  $e$ of $C$ resulting from a separating removal for $T$ (resp. $\hat{T}$)
is called a \emph{marked edge}. 
Such an edge $e$ results from the merger
of edges of $S'$. 
Then $e$
is called \emph{colored} if at least one of these edges is a colored edge of $S'$.  
And a component $C$ of $T$ (resp. of $\hat{T}$) is called \emph{uncolored} if it bears no colored edge (marked or unmarked).  
Since $S'$ has at most $2q$ colored edges, $T$ (resp. $\hat{T}$) 
has at most $2q$ colored components. 

Consider an uncolored component $C$ of $T$ of degree $0$ and not containing the root. 
Note that $C$ has no non-separating dipoles nor non-separating chain-vertices, by  Corollary~\ref{coro:removals}. 
In addition, by the arguments of Claim~\ref{claim:deg_three}, any component
of $T_C$ must have adjacency order at least $3$, otherwise it would yield a melon or a proper
chain in $S$. This clearly implies that $C$ can not have adjacency order smaller than $3$, and if it has adjacency order $3$, then $T_C$ consists of a unique component.  

Let $n_0$ be the number of components of $T$ of degree $0$, 
uncolored, and not containing the root. Let $n_0'$ be the number of other components of degree $0$. Let $n_+$ be the number of components of positive degree. Moreover, let $p$ be the number
of edges of $T$, which is also the number of broken chain-vertices of $S'$, so that $b=q+p$.

 The degrees of the components add up to the degree $\delta'=\delta-3q$ of $G'$ (recall that, for separating removals,  the degree is distributed among the components), and each component of positive degree has degree at least $1/2$ , hence 
$$n_+/2\leq \delta-3q.$$
The components counted by $n_0$ have at least $3$ neighbours in $T$, hence 
$$3n_0+n_0'+n_+\leq 2p.$$ 
The graph $S'$ has at most $2q$ blue edges, hence 
$$n_0'\leq 2q+1,$$ where the $+1$ accounts for 
the component containing the root. 
Since $T$ is a tree, we have
$$
p=n_0+n_0'+n_+-1,
$$ 
hence, eliminating $n_0$ using $3n_0+n_0'+n_+\leq 2p$, we obtain
$$
p\leq \frac{2}{3}(p+n_0'+n_+)-1\leq \frac{2}{3}(p+2q+1+2\delta-6q)-1,
$$
so that $p\leq 4\delta-8q-1$. 
Hence $b=p+q\leq 4\delta-7q-1\leq 4\delta-1$. 

If a scheme $S$ is such that 
$b$ reaches the upper bound $4\delta-1$, then it implies that all the above inequalities are tight, 
hence $q=0$ (all broken chain-vertices are separating), $n_0'=1$ (the component
containing the root has degree $0$), $3n_0+n_0'+n_+=2p$ (all the components of positive degree and the component containing the root are leaves of $T$, and the other components of degree $0$ have $3$ neighbours in $T$), and $n_+=2\delta$ (all positive degree components have degree $1/2$). 

We take a closer look at the components of degree $0$. Let $C$ be such a component
not containing the root; note that $C$
 has no non-separating dipole nor non-separating chain-vertex according to Corollary~\ref{coro:removals}. 
Since $C$ has adjacency order $3$ and is uncolored (because $q=0$), 
as already mentioned $T_C$ must have a single component,  hence $C$ 
 has no chain-vertex and any dipole
of $C$ must pass by a marked edge of $C$.  
Recall that an unrooted MO-graph of degree $0$ is either the cycle-graph, or 
can be obtained from the ``quadruple-edge'' MO-graph (two vertices connected by $4$ edges) by 
successive insertions of melons. Note that, as soon as at least one melon is inserted in the 
quadruple-edge MO-graph, there are two vertex-disjoint melons. Hence, an MO-graph of degree $0$
with at most $3$ marked edges and strictly more than $2$ vertices has a dipole that avoids the marked edges. 
It follows that $C$
has to be either the cycle-graph or the quadruple-edge graph. 
Now, if $C$ is the component containing the root, since it has only one adjacent component $C'$, 
it has to be the (rooted) cycle-graph. Indeed, 
the case of a rooted quadruple-edge MO-graph is excluded; in that case, among the 4 edges of $C$, 
one carries the root, one is marked,  
and the two other ones form a dipole that, together
with the chain-vertex connecting $C$ to $C'$, would form a proper chain of $S$, a contradiction. 

We now take a closer look at components of positive degrees (of degree $1/2$). Let  
 $C$ be such a component (which has a unique marked edge since it is at a non-root leaf of $T$). Since $C$ has degree $1/2$ it can not have non-separating chain-vertices
or non-separating unbroken chain-vertices, according to Corollary~\ref{coro:removals}. And 
 by the arguments of Claim~\ref{claim:deg_three},   
 components of degree $0$ of $T_C$ must have adjacency order at least $3$. 
Since the degrees of components in $T_C$ add up to $1/2$, 
this easily implies that the unique possibility is $T_C$ having a single node. Hence $C$
has no dipole avoiding the marked edge nor chain-vertices, so that $C$ must be the cw or the ccw infinity graph, 
according to Corollary~\ref{coro:dipole12}.

\begin{figure}
\begin{center}
\includegraphics[width=8cm]{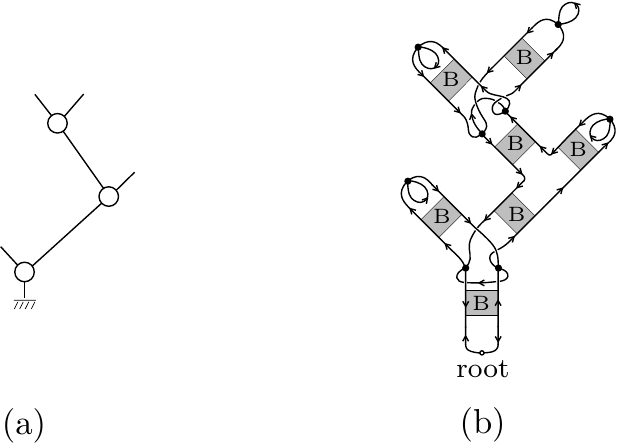}
\end{center}
\caption{(a) A rooted binary tree $\beta$ with $5$ leaves. (b) 
One of the $2^{10}$ dominant schemes of degree $2$ arising from $\beta$.}
\label{fig:dominant}
\end{figure} 

\begin{prop}\label{prop:structure_dominant}
For $\delta\in\tfrac12\mathbb{Z}_+^*$, the dominant schemes of degree $\delta$ arise from
 rooted binary trees (see Figure~\ref{fig:dominant} for an example) 
with $2\delta+1$ leaves, $2\delta-1$ inner nodes, and $4\delta-1$ edges,
where the root-leaf is occupied by the rooted cycle-graph, the $2\delta$ other leaves are occupied
by (cw or ccw) infinity graphs, the $2\delta-1$ inner nodes are occupied either
by the cycle-graph or by  the quadruple-edge graph,  
and the $4\delta-1$ edges are occupied by separating broken chain-vertices.

Each rooted binary tree with $2\delta+1$ leaves yields $2^{6\delta-2}$ dominant schemes. 
\end{prop}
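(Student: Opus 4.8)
The plan is to assemble the structural constraints already derived in the analysis preceding the proposition, show they force exactly the claimed tree structure, and then count the binary choices. From the tightness analysis we already know that for a dominant scheme $S$: $q=0$ (all broken chain-vertices are separating), the tree $T$ of components has $n_0'=1$, $n_+=2\delta$, and the degree-$0$ non-root components all have adjacency order exactly $3$, with the single root-component being a leaf. I would begin by setting up the correspondence between $S$ and $T$: since $T$ is a tree whose edges are the (separating, broken, by $q=0$) chain-vertices and whose nodes are the components, and since every node of $T$ has adjacency order in $\{1,3\}$ (leaves have order $1$, internal degree-$0$ components have order $3$), $T$ is combinatorially a rooted binary tree. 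The root-leaf corresponds to the root-component, the other $2\delta$ leaves to the positive-degree components, and the $2\delta-1$ internal nodes to the non-root degree-$0$ components of adjacency order $3$.

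First I would verify the tree's parameters. A rooted tree in which the root is a leaf and all internal nodes are trivalent has, if there are $\ell$ leaves, exactly $\ell-2$ internal nodes and $2\ell-3$ edges; but it is cleaner to count directly: with $n_+ = 2\delta$ non-root leaves plus one root-leaf we get $2\delta+1$ leaves total, $n_0 = 2\delta-1$ internal nodes (solving $3n_0+n_0'+n_+=2p$ with $p=n_0+n_0'+n_+-1$ gives $p=4\delta-1$ and $n_0=2\delta-1$), and $p=4\delta-1$ edges. Then I would invoke the component identifications already established in the surrounding text: the root-component must be the rooted cycle-graph, each positive-degree leaf-component must be a cw or ccw infinity graph (by Corollary~\ref{coro:dipole12} together with the single-node $T_C$ argument), and each internal degree-$0$ component must be either the cycle-graph or the quadruple-edge graph (by the melon-insertion argument forcing $\le 2$ vertices). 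This exactly matches the structural description in the proposition's first paragraph.

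For the counting statement I would fix a rooted binary tree $\beta$ with $2\delta+1$ leaves and enumerate the independent binary choices that produce distinct dominant schemes. The choices are: an orientation (cw or ccw) for each of the $2\delta$ infinity-graph leaves, contributing a factor $2^{2\delta}$; a type (cycle-graph or quadruple-edge graph) for each of the $2\delta-1$ internal nodes, contributing $2^{2\delta-1}$. Multiplying, $2^{2\delta}\cdot 2^{2\delta-1}=2^{4\delta-1}$, which does \emph{not} match the claimed $2^{6\delta-2}$; so the key step I expect to be the main obstacle is correctly identifying \emph{all} the binary degrees of freedom. The missing factor $2^{2\delta-1}$ must come from how each broken chain-vertex (edge of $\beta$) can be realized: a separating broken chain-vertex has a discrete choice (essentially which pair of strand-types is swapped, i.e.\ the ``broken'' structure admits two inequivalent local configurations) that is not fixed by the adjacent components. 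I would need to inspect Figure~\ref{fig:chain_strands} and the local rules of Figure~\ref{fig:rules_MO_graph_chain} to pin down precisely this two-fold choice at each of the $4\delta-1$ edges --- but only $2\delta-1$ of them (those at internal edges, not at the $2\delta$ leaf-edges) carry the extra freedom --- giving the remaining $2^{2\delta-1}$ and hence the total $2^{(2\delta)+(2\delta-1)+(2\delta-1)}=2^{6\delta-2}$. Verifying that these choices are genuinely independent and that distinct choices yield non-isomorphic rooted schemes is the delicate bookkeeping at the heart of the argument.
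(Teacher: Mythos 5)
The structural half of your proposal is sound and matches the paper's analysis: the tightness conditions ($q=0$, $n_0'=1$, $n_+=2\delta$, adjacency orders in $\{1,3\}$) together with the component identifications (Corollary~\ref{coro:dipole12}, the melon-insertion argument) do force exactly the stated tree shape, and your parameter count for the tree is correct. You should also state the converse explicitly --- that every graph of the stated form \emph{is} a reduced scheme of degree $\delta$ with $b=4\delta-1$ (no melon, no proper chain) --- since dominance is a statement about the maximum over actual schemes; the paper does this in one sentence and it is easy, but it cannot be skipped.

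The counting half, however, contains a genuine error. You correctly flag the mismatch between your $2^{4\delta-1}$ and the claimed $2^{6\delta-2}$, but your proposed resolution is wrong: a separating broken chain-vertex carries \emph{no} residual binary freedom. Broken chain-vertices form a single type, labelled $B$, whose strand routing is completely specified (Figure~\ref{fig:chain_strands}); once the two adjacent components and the attachment edges are fixed, the chain-vertex is determined, so the $4\delta-1$ edges of the tree contribute a factor $1$ each. The missing factor $2^{2\delta-1}$ instead lives at the \emph{inner nodes}: an inner node is not a two-way choice but a four-way choice. If it is occupied by the quadruple-edge graph, three of the four edges are marked (one per adjacent chain-vertex) and one edge is free, and the straight face passing through the free edge can pair it toward the left child, the right child, or the parent --- three inequivalent strand configurations. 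Hence each inner node contributes $1+3=4$ choices, and the correct count is
$$
4^{2\delta-1}\cdot 2^{2\delta}=2^{4\delta-2}\cdot 2^{2\delta}=2^{6\delta-2},
$$
with the entire discrepancy absorbed by the quadruple-edge configurations rather than by any freedom at the chain-vertices. Numerically your conjectured correction $2^{2\delta-1}$ coincides with the true one, but the degrees of freedom you posit do not exist, so independence and non-isomorphism of your choices could never be verified as stated.
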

\begin{proof}
In the analysis above we have seen that $b\leq 4\delta-1$ and that any scheme with $b=4\delta-1$ must be of the stated form. 
Conversely any scheme of the stated form  is a valid scheme (no melon nor proper chain) of degree $\delta$ with $b=4\delta-1$ chain-vertices. 
Hence these schemes are exactly the dominant schemes at degree $\delta$. 
Each rooted binary tree with $2\delta+1$ leaves gives rise to $4^{2\delta-1}2^{2\delta}=2^{6\delta-2}$ dominant schemes. 
Indeed, at each inner node, one has to decide whether it is occupied by the cycle-graph or
by the quadruple-edge graph, and in the second case, one has to choose among the three possible configurations for the free edge (the unique edge not involved in any connection) since the straight face passing
by the free edge can go either toward the left child, or the right child, or  the parent. 
Finally, one has to decide at each non-root leaf if it is occupied by the cw or the ccw infinity graph. 
\end{proof}

\begin{rk}
As we have recalled in Section~\ref{sec:regular_subfamily}, regular colored graphs in dimension $3$ naturally form a subfamily of MO-graphs. However, since half-integer degrees are not possible in the model of  regular colored graphs, the dominant schemes differ; as shown in~\cite{GS}, in degree $\delta\in\mathbb{Z}_+$,
the dominant schemes are associated to rooted binary trees with $\delta+1$ leaves (and $\delta-1$ inner nodes), where the root-leaf is occupied by a root-melon, while the $\delta$ non-root leaves
are occupied by the unique scheme of degree $1$. 
\end{rk}

\section{Generating functions and asymptotic enumeration}\label{sec:gen_funct}

Let $\delta\in\tfrac12\mathbb{Z}_+$, and let $\cS_{\delta}$ be the (finite) set of reduced schemes
of degree $\delta$. For each $S\in\cS_{\delta}$, let $G_S^{(\delta)}(u)$ be the generating function of 
 rooted melon-free MO-graphs of reduced scheme $S$, where $u$ marks half the number of
non-root vertices (i.e., for $p\in\tfrac12\mathbb{Z}_+$, a weight $u^p$ is given to rooted MO graphs with $2p$ non-root vertices). Let $p$ be half the number of non-root standard vertices of $S$, $b$
the number of broken chain-vertices, $a$ the number of unbroken chain-vertices of type $L$ or $R$,
$s_e$ the number of even straight chain-vertices, and $s_o$ the number of odd straight chain-vertices. 
The generating functions for unbroken chains of type L (resp. R) is clearly $u^2/(1-u)$,
the one for even straight chains is $u^2/(1-u^2)$, the one for odd straight chains is $u^3/(1-u^2)$,
and the one for broken chains is $(3u)^2/(1-3u)-3u^2/(1-u)=6u^2/((1-3u)(1-u))$. Therefore
$$
G_S^{(\delta)}(u)=u^p\frac{u^{2a}}{(1-u)^a}\frac{u^{2s_e}}{(1-u^2)^{s_e}}\frac{u^{3s_o}}{(1-u^2)^{s_o}}\frac{6^bu^{2b}}{(1-3u)^b(1-u)^b}.
$$
Denoting by $c$ the total number of chain-vertices and by $s=s_e+s_o$ the total number of straight chain-vertices, this simplifies as
\begin{equation}
G_S^{(\delta)}=\frac{6^bu^{p+2c+s_o}}{(1-u)^{c-s}(1-u^2)^s(1-3u)^b}.
\end{equation}
It is well-known that the generating function of rooted melonic graphs is given by 
$$
T(z)=1+zT(z)^4.
$$
In addition, a rooted melon-free MO-graph with $2p$ non-root vertices has $4p+1$ edges (recall
that the root-edge is split into two edges) where one can insert a rooted 
melonic graph. Therefore, defining $U(z):=zT(z)^4=T(z)-1$,
 the generating function $F_S^{(\delta)}(z)$ of rooted MO-graphs
of reduced scheme $S$ is given by 
\begin{equation}
F_S^{(\delta)}(z)=T(z)\frac{6^bU(z)^{p+2c+s_o}}{(1-U(z))^{c-s}(1-U(z)^2)^s(1-3U(z))^b},
\end{equation}
and the generating function $F^{(\delta)}(z)$ of rooted MO-graphs of degree $\delta$ is simply 
given by 
\begin{equation}
F^{(\delta)}(z)=\sum_{S\in\cS_{\delta}}F_S^{(\delta)}(z). 
\end{equation}
As discussed in~\cite{GS},
$T(z)$ has its main singularity at $z_0:=3^3/2^8$, $T(z_0)=4/3$, and $1-3U(z)\sim_{z\to z_0}2^{3/2}3^{-1/2}(1-z/z_0)^{1/2}$. Therefore, the dominant terms are those for which 
$b$ is maximized. As shown in Section~\ref{sec:dominant}, these schemes are naturally associated to rooted binary trees with $2\delta-1$ inner nodes, as stated in Proposition~\ref{prop:structure_dominant}.    
For a fixed rooted binary tree $\gamma$ with $2\delta-1$ inner nodes, 
the total contribution of schemes arising from $\gamma$ to the generating function of 
rooted melon-free MO-graphs of degree $\delta$ is 
$$
\frac{(2u^{1/2})^{2\delta}(1+3u)^{2\delta-1}6^{4\delta-1}u^{8\delta-2}}{(1-u)^{4\delta-1}(1-3u)^{4\delta-1}}.
$$
Indeed, such schemes have $b=4\delta-1$, $c=b$, $s=s_o=0$, each of the $2\delta$
non-root leaves is occupied either by the cw or ccw infinity graph, and each of the $2\delta-1$ inner nodes is either occupied by the cycle-graph or 
the quadruple-edge graph that has $3$ possible configurations.

Hence the total contribution to the generating function of rooted MO-graphs is 
\begin{eqnarray*}
&&T(z)\frac{U(z)^{\delta}2^{2\delta}(1+3U(z))^{2\delta-1}6^{4\delta-1}U(z)^{8\delta-2}}{(1-U(z))^{4\delta-1}(1-3U(z))^{4\delta-1}}\\
&=&z^{\delta}T(z)^{1+4\delta}\frac{2^{6\delta-1}3^{4\delta-1}(1+3U(z))^{2\delta-1}U(z)^{8\delta-2}}{(1-U(z))^{4\delta-1}(1-3U(z))^{4\delta-1}}.
\end{eqnarray*}
This is to be multiplied by the Catalan number $\mathrm{Cat}_{2\delta-1}$ of rooted binary
trees with $2\delta-1$ inner nodes. 
Note that, by design, the dominant schemes  
 contribute only to graphs with $2n$ non-root vertices
such that $n\in\delta+\mathbb{Z}$, which is consistent here with  
 the prefactor $z^{\delta}$. Graphs with $2n$ non-root vertices such   that $n\in\delta+1/2+\mathbb{Z}$ have schemes with strictly less than $4\delta-1$ broken chain-vertices,
hence have a singular behaviour of type $(1-z/z_0)^{-2\delta+1+\alpha}$, with $\alpha\in\tfrac12\mathbb{Z}_+$. 
After elementary calculations and applying transfer theorems of analytic combinatorics~\cite{fs}, we obtain: 

\begin{prop}\label{prop:asympt}
For $\delta$ and $n$ in $\tfrac12\mathbb{Z}_+$, let $a_n^{(\delta)}$ be the number of rooted MO-graphs with $2n$ vertices and degree $\delta$. Then, $\delta$ being fixed, for $n\in \delta+\mathbb{Z}$ (and $\Gamma(\cdot)$ denoting the Euler gamma function), 
\begin{equation}\label{eq:asympt_degree}
a_n^{(\delta)}\sim\ \mathrm{Cat}_{2\delta-1}\cdot\frac{3^{\delta-3/2}}{2^{2\delta-5/2}}\cdot\frac{n^{2\delta-3/2}}{\Gamma(2\delta-1/2)}\cdot(2^8/3^3)^n\ \mathrm{as}\ n\to\infty.
\end{equation}
and $a_{n+1/2}^{\delta}=O(a_{n}^{\delta}/\sqrt{n})\ \mathrm{as}\ n\to\infty$. 
\end{prop}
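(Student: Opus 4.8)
The plan is to extract the asymptotic behavior of the coefficients $a_n^{(\delta)}$ directly from the dominant contribution to $F^{(\delta)}(z)$ computed just above, via singularity analysis. The essential point is that, among all reduced schemes of degree $\delta$, those maximizing the number $b$ of broken chain-vertices ($b=4\delta-1$, by Proposition~\ref{prop:structure_dominant}) produce the most singular terms, since each broken chain-vertex contributes a factor $(1-3U(z))^{-1}$ and $1-3U(z)\sim_{z\to z_0}2^{3/2}3^{-1/2}(1-z/z_0)^{1/2}$, giving a singularity of type $(1-z/z_0)^{-(4\delta-1)/2}=(1-z/z_0)^{-2\delta+1/2}$. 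First I would record that $z_0=3^3/2^8$, that $U(z_0)=T(z_0)-1=1/3$, and hence evaluate at $z=z_0$ the analytic prefactors $T(z)^{1+4\delta}$, $(1+3U(z))^{2\delta-1}$, $U(z)^{8\delta-2}$, and $(1-U(z))^{-(4\delta-1)}$ appearing in the dominant contribution. Substituting $U(z_0)=1/3$ gives $1+3U(z_0)=2$, $U(z_0)^{8\delta-2}=3^{-(8\delta-2)}$, and $(1-U(z_0))^{-(4\delta-1)}=(2/3)^{-(4\delta-1)}$; collecting these numerical constants with the explicit powers of $2$ and $3$ already present yields the multiplicative constant in the singular expansion.

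Next I would apply the standard transfer theorem of analytic combinatorics~\cite{fs}: if a generating function behaves near its dominant singularity $z_0$ like $K\,(1-z/z_0)^{-\beta}$ with $\beta=2\delta-1/2\notin\mathbb{Z}_{\leq 0}$, then its $n$-th coefficient is asymptotic to $K\,z_0^{-n}\,n^{\beta-1}/\Gamma(\beta)$. Here $\beta-1=2\delta-3/2$, $\Gamma(\beta)=\Gamma(2\delta-1/2)$, and $z_0^{-n}=(2^8/3^3)^n$, matching the shape of~\eqref{eq:asympt_degree}. The factor $\mathrm{Cat}_{2\delta-1}$ enters because the dominant contribution is summed over all rooted binary trees with $2\delta-1$ inner nodes, of which there are exactly $\mathrm{Cat}_{2\delta-1}$, and all of them contribute the identical singular term. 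The computation then reduces to checking that the constant $K$, once the square-root behaviour of $1-3U$ is expanded as $(1-3U(z))^{-(4\delta-1)}\sim (2^{3/2}3^{-1/2})^{-(4\delta-1)}(1-z/z_0)^{-2\delta+1/2}$ and multiplied by the analytic prefactors evaluated at $z_0$, collapses to $3^{\delta-3/2}2^{-(2\delta-5/2)}$.

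The parity statement $a_{n+1/2}^{(\delta)}=O(a_n^{(\delta)}/\sqrt{n})$ follows from the observation already made in the text: the dominant schemes contribute only to graphs with $2n$ non-root vertices for which $n\in\delta+\mathbb{Z}$ (consistent with the prefactor $z^{\delta}$), whereas graphs with $n\in\delta+\tfrac12+\mathbb{Z}$ arise only from schemes with $b\leq 4\delta-2$ broken chain-vertices, giving a singular exponent at most $2\delta-1$ instead of $2\delta-1/2$. By the transfer theorem this produces coefficients of order $n^{2\delta-2}(2^8/3^3)^n$, which is smaller than $a_n^{(\delta)}\sim n^{2\delta-3/2}(2^8/3^3)^n$ precisely by a factor $n^{-1/2}$, yielding the stated $O(1/\sqrt{n})$ ratio. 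The main obstacle I expect is purely bookkeeping rather than conceptual: correctly tracking the accumulated powers of $2$ and $3$ through the evaluation at $U(z_0)=1/3$, the square-root expansion of $1-3U$, and the Catalan prefactor, so that the messy constant collapses to the clean $3^{\delta-3/2}2^{-2\delta+5/2}/\Gamma(2\delta-1/2)$; one must also verify that no \emph{other} family of schemes (for instance those with $b=4\delta-1$ but nonzero $s$ or $a$) can match the dominant exponent, which is ensured by the tightness analysis of Section~\ref{sec:dominant} showing that $b=4\delta-1$ forces $s=a=0$ and $c=b$.
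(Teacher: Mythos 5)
Your proposal is correct and takes essentially the same route as the paper: the paper's own argument is precisely the singularity analysis you outline --- evaluating the analytic prefactors at $U(z_0)=1/3$, expanding $(1-3U(z))^{-(4\delta-1)}$ via $1-3U(z)\sim 2^{3/2}3^{-1/2}(1-z/z_0)^{1/2}$, summing over the $\mathrm{Cat}_{2\delta-1}$ binary trees, applying the transfer theorem with exponent $\beta=2\delta-1/2$, and bounding the $n\in\delta+\tfrac12+\mathbb{Z}$ coefficients by the sub-dominant singular exponent $2\delta-1$. Your bookkeeping checks out (the constant does collapse to $3^{\delta-3/2}/2^{2\delta-5/2}$), and your closing remark that the characterization of schemes with $b=4\delta-1$ from Section~\ref{sec:dominant} is needed to pin down the constant is exactly the role Proposition~\ref{prop:structure_dominant} plays in the paper.
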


\begin{rk}
According to Proposition~\ref{prop:asympt}, for $\delta\in\mathbb{Z}_+$   $a_n^{(\delta)}$  is asymptotically $\Theta(n^{2\delta-3/2}(2^8/3^3)^n)$, where the multiplicative
constant involves $\mathrm{Cat}_{2\delta-1}$. 
As a comparison, as shown in~\cite{GS}, for $\delta\in\mathbb{Z}_+$ the number $c_n^{(\delta)}$ of rooted colored graphs
(in dimension $3$) of degree $\delta$ with $2n$ vertices is asymptotically $\Theta(n^{\delta-3/2}(2^8/3^3)^n)$, where the multiplicative constant involves $\mathrm{Cat}_{\delta-1}$. We also deduce from these estimates that for fixed $\delta\in\mathbb{Z}_+$, the probability that a random rooted MO-graph of degree $\delta$ with $2n$ vertices (where $n\in\mathbb{Z}_+$) is a regular colored graph is  $\Theta(n^{-\delta})$. 
\end{rk}

Now, going back to Remark~\ref{rk:doodles}, 
we can also easily obtain the asymptotic enumeration under
the planarity constraint, based on the following:

\begin{lem}
Each rooted melon-free MO-graph whose reduced scheme is dominant is planar. 
\end{lem}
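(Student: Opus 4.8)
The plan is to show that the orientable jacket $G_{\ell,r}$ has genus $0$, since by definition $G$ is planar precisely when $g=g_{\ell,r}=0$. The guiding idea is that, just like the degree, the genus $g$ should split additively whenever $G$ is cut along a separating dipole or separating (broken) chain, and that every building block out of which a dominant scheme is assembled already has genus $0$. Throughout I would stay, as much as possible, at the level of honest MO-graphs, passing between a melon-free graph and its reduced scheme via Lemma~\ref{lem:substitute}.

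First I would record an additivity statement for $g$ paralleling Lemma~\ref{lem:removals}: if removing a separating dipole or a whole separating chain splits $G$ into $G_1$ and $G_2$, then $g(G)=g(G_1)+g(G_2)$. The point is that the vertex/edge set of each of the three jackets splits accordingly, so that $G_{\ell,r}$ is obtained by gluing $(G_1)_{\ell,r}$ and $(G_2)_{\ell,r}$ along the (planar) image of the removed piece; since $G_{\ell,r}$ is orientable and the gluing is along a separating piece, orientable genus is additive. Consistently, $g_{\ell,s}$ and $g_{r,s}$ also distribute, in agreement with the additivity of the degree (Claim~\ref{claim:disconnected}). I would also check that a consistent substitution of a chain-vertex by a chain of dipoles leaves $g$ unchanged: on the level of left and right strands such a chain is a linear, genus-$0$ gadget, so its insertion creates no handle in the orientable jacket. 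Combined with Lemma~\ref{lem:substitute}, this lets me treat a whole broken chain as a single separating connector.

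Next I would compute $g=0$ for each primitive block appearing in a dominant scheme. The cycle-graph and the quadruple-edge graph are planar by definition, hence $g_{\ell,r}=0$; for an infinity graph the degree equals $1/2$, and since $g\in\mathbb{Z}_+$ satisfies $0\le g\le\delta$, this forces $g=0$. By Proposition~\ref{prop:structure_dominant}, a dominant scheme is exactly a rooted binary tree whose $4\delta$ nodes and leaves carry these blocks and whose $4\delta-1$ edges are separating broken chain-vertices. Peeling these separating chains off one at a time and applying the additivity step, I obtain $g(G)=\sum_{\text{blocks}}g(\text{block})=0$, which is the claim. Equivalently, Lemma~\ref{lem:straight_faces} then gives $\Lambda=2\delta$, with the entire degree carried by $g_{\ell,s}+g_{r,s}$ coming from the $2\delta$ infinity graphs.

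The step I expect to be the main obstacle is the additivity (and substitution-invariance) for the straight type, namely for $S$-dipoles and straight chain-vertices. For the $L$ and $R$ types the relevant length-$2$ face is a face of the orientable jacket, so cutting there is literally cutting along an embedded disk and additivity of genus is immediate. For an $S$-dipole, and for a straight chain-vertex, the degree-$2$ face instead lies in the straight jacket and is absent from $G_{\ell,r}$, where the two crossing strands run straight through. I would therefore have to verify, by a direct local inspection of the strand structure (as in Figures~\ref{fig:deletion} and~\ref{fig:chain_strands}), that removing such a separating piece still disconnects $G_{\ell,r}$ into two maps without merging or creating any handle, so that no genus is gained or lost in the straight case either.
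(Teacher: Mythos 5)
Your strategy---additivity of the canonical genus under removal of separating dipoles, plus planarity of the blocks of a dominant scheme---is genuinely different from the paper's argument, and the additivity statement you need is in fact true. But as written the proposal has a real gap, and it sits exactly where the technical content is: the additivity lemma is asserted, not proved, and the case you dismiss as ``immediate'' (types L and R) is not immediate either. The two sides of a dipole each consist of \emph{one} leg at each of its two vertices (this pairing is forced by the strand routing, and is what makes chains and melons work); consequently, for a separating L-dipole the \emph{left} strands are precisely the ones that cross from one side to the other, so after removal the left faces of the two components $G_1,G_2$ correspond to a \emph{single merged} left face of $G$. In particular the orientable jacket $G_{\ell,r}$ does not split as a disjoint union glued along a disk, contrary to your picture. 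Additivity of $g$ survives only because the merged face is compensated by the disappearing length-$2$ face in the Euler characteristic: one checks $V(G)=V_1+V_2+2$, $E(G)=E_1+E_2+4$, and, for a separating dipole of type $X\in\{L,R,S\}$, that only the type-$X$ strands cross between the two sides, so that each of $F_{\ell},F_r,F_s$ is additive (for type $X$ itself: two faces merge, one length-$2$ face appears), whence $\chi(G)=\chi(G_1)+\chi(G_2)-2$ and $g(G)=g_1+g_2$. This local, type-by-type verification is the proof; without it you have only a plausibility argument. A shortcut that reduces the strand-tracing: by Lemma~\ref{lem:straight_faces}, $4g=2\delta-V+2F_s-2$, and additivity of $\delta$ under separating removals is already Lemma~\ref{lem:removals}, so it suffices to track the straight faces alone.

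For comparison, the paper's proof sidesteps all of this with a short trick: by Proposition~\ref{prop:structure_dominant}, a melon-free graph with dominant scheme has exactly $2\delta$ loops (one at each infinity-graph leaf of the binary tree); removing them one at a time via Lemma~\ref{lem:remove_loop} lowers the degree by $1/2$ each time, producing a rooted MO-graph of degree $0$, which is planar (e.g.\ since $g\leq\delta$, or since degree-$0$ graphs are melonic); and the loops can clearly be reinserted in a planar embedding without creating crossings. So if you carry out the local verification for all three dipole types (with the correct identification of the sides), your route yields a legitimate alternative proof---and a genus-additivity statement of some independent interest---but in its current form the key step is missing.
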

\begin{proof}
As shown in Section~\ref{sec:dominant}, a rooted melon-free MO-graph $G$ of degree $\delta\in\tfrac12\mathbb{Z}$ has a total of $2\delta$ loops (one at each non-root leaf
of the associated binary tree, see Figure~\ref{fig:dominant}). According to Lemma~\ref{lem:remove_loop},  
erasing all these $2\delta$ loops yields a (rooted) MO-graph $G'$ of degree $0$, hence
a planar MO-graph. Clearly the $2\delta$ erased loops can be inserted back without breaking planarity, from which we conclude that $G$ is planar. 
\end{proof}

\begin{figure}
\begin{center}
\includegraphics[width=8cm]{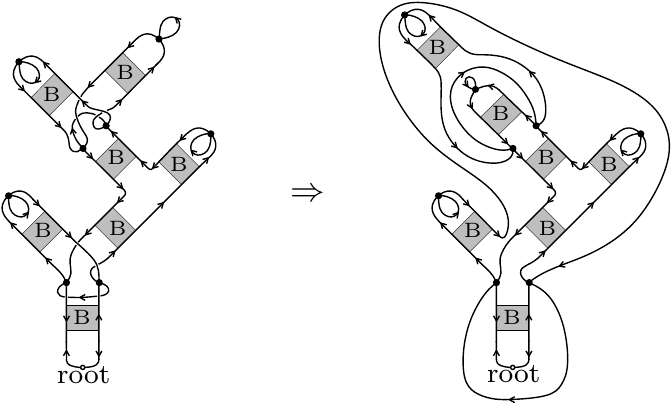}
\end{center}
\caption{Planar redrawing of the scheme of Figure~\ref{fig:dominant}.}
\label{fig:planar_redraw}
\end{figure}

\begin{rk}
Given any dominant scheme such as shown in Figure~\ref{fig:dominant} 
(naturally drawn following the shape
of a rooted binary tree), 
one can easily figure out how to retract/extend some parts of the drawing so as to obtain
a  planar redrawing, see Figure~\ref{fig:planar_redraw}.
\end{rk}

\begin{coro}\label{coro:almost_sure}
For each fixed $\delta\in\tfrac12\mathbb{Z}_+$ and for $n\in\delta+\mathbb{Z}$, the probability
that a rooted MO-graph of degree $\delta$ with $2n$ vertices is planar tends to $1$ as $n\to\infty$. 
\end{coro}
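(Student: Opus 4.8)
The plan is to compare the asymptotic count of all rooted MO-graphs of degree $\delta$ against the count of those that are planar, and show the latter captures the full leading-order asymptotics. The key structural input is the lemma just proved: every rooted melon-free MO-graph whose reduced scheme is dominant is planar. Combined with the fact that inserting melonic graphs (which are themselves planar, being of degree $0$) into the edges of a planar melon-free core preserves planarity, this shows that \emph{every} rooted MO-graph of degree $\delta$ arising from a dominant scheme is planar. Hence the entire dominant contribution computed in the lead-up to Proposition~\ref{prop:asympt} --- the term with $b=4\delta-1$ giving the $(2^8/3^3)^n n^{2\delta-3/2}$ growth --- comes from planar graphs.

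First I would let $a_n^{(\delta)}$ be the number of rooted MO-graphs of degree $\delta$ with $2n$ vertices (as in Proposition~\ref{prop:asympt}), and let $\tilde{a}_n^{(\delta)}$ denote the number of those that are planar. By the discussion above, the planar graphs include all those whose reduced scheme is dominant, so $\tilde{a}_n^{(\delta)} \geq (\text{contribution of dominant schemes to } a_n^{(\delta)})$. Since, as established in Section~\ref{sec:gen_funct}, the dominant schemes alone already produce the full leading-order asymptotics~\eqref{eq:asympt_degree}, we get the lower bound
\begin{equation*}
\tilde{a}_n^{(\delta)}\ \gtrsim\ \mathrm{Cat}_{2\delta-1}\cdot\frac{3^{\delta-3/2}}{2^{2\delta-5/2}}\cdot\frac{n^{2\delta-3/2}}{\Gamma(2\delta-1/2)}\cdot(2^8/3^3)^n.
\end{equation*}
On the other hand, trivially $\tilde{a}_n^{(\delta)}\leq a_n^{(\delta)}$, and by Proposition~\ref{prop:asympt} the right-hand side is asymptotically equivalent to the same expression. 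Therefore $\tilde{a}_n^{(\delta)} \sim a_n^{(\delta)}$, and the probability that a uniformly random rooted MO-graph of degree $\delta$ with $2n$ vertices is planar, namely $\tilde{a}_n^{(\delta)}/a_n^{(\delta)}$, tends to $1$ as $n\to\infty$.

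I expect the only delicate point to be justifying rigorously that the non-dominant schemes (those with $b<4\delta-1$) contribute a strictly lower-order amount, so that the dominant contribution really is asymptotically equal to $a_n^{(\delta)}$ and not merely a lower-order piece of it. This is already handled in the singularity analysis preceding Proposition~\ref{prop:asympt}: a scheme with $b$ broken chain-vertices yields a generating function singular like $(1-z/z_0)^{-b+\text{const}}$, so smaller $b$ gives a weaker singularity and hence a subexponentially smaller count by the transfer theorems of~\cite{fs}. Thus the squeeze between the lower bound from dominant schemes and the upper bound $a_n^{(\delta)}$ is valid, and no genuinely new estimate is needed --- the corollary follows by combining the planarity lemma with the asymptotic analysis already in place.
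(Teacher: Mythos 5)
Your proof is correct and takes essentially the same approach as the paper: the paper's own (one-line) proof likewise observes that melonic edge-substitution preserves planarity, so all rooted MO-graphs arising from a dominant reduced scheme are planar, and then concludes because these graphs dominate the asymptotic expansion established in Section~\ref{sec:gen_funct}. Your write-up merely makes explicit the ratio $\tilde{a}_n^{(\delta)}/a_n^{(\delta)}$ and the fact that non-dominant schemes (smaller $b$, hence weaker singularity) contribute negligibly, both of which the paper leaves implicit.
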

\begin{proof}
It just follows from the observation that the edge-substitution by melonic components clearly
preserves planarity, so that all rooted MO-graphs arising from a dominating reduced scheme  are planar. This concludes the proof since these rooted MO-graphs are the ones that dominate the asymptotic expansion.
\end{proof}

According to Remark~\ref{rk:doodles}, in the planar case, rooted MO-graphs correspond (bijectively) to 
rooted 4-regular maps, the straight faces of the MO-graph identify to the knot-components 
of the map, and the number of straight faces is equal to $V/2+1-\delta$, with $V$ the number of vertices and $\delta$ the degree. Under this rephrasing,  Proposition~\ref{prop:asympt} and Corollary~\ref{coro:almost_sure}
yield the following:

\begin{prop}
For $n\in\tfrac12\mathbb{Z}_+$ and $k\in\mathbb{Z}_+$, let $b_{n}^{(k)}$ be the number of rooted 4-regular planar maps with $2n$ vertices and $k$ knot-components. Then $b_{n}^{(k)}=0$ for $k>n+1$.
In addition, for each fixed $\delta\in\tfrac12\mathbb{Z}_+$, and for $n\in\delta+\mathbb{Z}$, $b_{n}^{(n+1-\delta)}$ has the same asymptotic estimate as $a_n^{(\delta)}$, 
given by ~\eqref{eq:asympt_degree}. 
\end{prop}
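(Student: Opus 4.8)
The plan is to translate the statement entirely into the language of MO-graphs, where the relevant machinery is already in place, and then to read off the conclusion from the almost-sure planarity result. The key dictionary is the following: by Remark~\ref{rk:bij_4regular}, in the planar case \emph{every} rooted 4-regular map is the underlying map of a rooted planar MO-graph, and under this identification the number of vertices is preserved while, by Remark~\ref{rk:doodles}, the number $k$ of knot-components equals the number $F_s$ of straight faces. Since such an MO-graph has genus $g=0$, Lemma~\ref{lem:straight_faces} specializes to $\Lambda=V-2F_s+2=2\delta-4g=2\delta$, so that with $V=2n$ we obtain the clean relation $F_s=n+1-\delta$.

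For the first assertion, I would simply note that $\delta\in\tfrac12\mathbb{Z}_+$ is nonnegative, so the relation $k=F_s=n+1-\delta$ forces $k\leq n+1$ for every rooted 4-regular planar map with $2n$ vertices. Consequently any value $k>n+1$ is unattainable, whence $b_n^{(k)}=0$.

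For the asymptotic assertion, the same relation $F_s=n+1-\delta$ pins down exactly which objects $b_n^{(n+1-\delta)}$ counts: requiring a rooted 4-regular planar map with $2n$ vertices to have $k=n+1-\delta$ knot-components is, under the identification above, the same as requiring the corresponding rooted planar MO-graph to have degree $\delta$. Hence $b_n^{(n+1-\delta)}$ equals the number of \emph{planar} rooted MO-graphs of degree $\delta$ with $2n$ vertices. It then remains to compare this count with $a_n^{(\delta)}$, the number of \emph{all} (not necessarily planar) rooted MO-graphs of degree $\delta$ with $2n$ vertices. Here I would invoke Corollary~\ref{coro:almost_sure} directly: for $n\in\delta+\mathbb{Z}$, the proportion of planar graphs among rooted MO-graphs of degree $\delta$ with $2n$ vertices tends to $1$ as $n\to\infty$. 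This says precisely that the number of planar such graphs is asymptotically equivalent to $a_n^{(\delta)}$, and combined with the identification above it yields $b_n^{(n+1-\delta)}\sim a_n^{(\delta)}$, so that $b_n^{(n+1-\delta)}$ obeys the same estimate~\eqref{eq:asympt_degree}.

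The proof involves no hard computation: the asymptotic formula itself is already established in Proposition~\ref{prop:asympt}, and the singularity analysis is not revisited. The only point requiring care is the bookkeeping in the dictionary—specifically, checking that the genus-$0$ hypothesis is exactly what collapses the general identity $\Lambda=2\delta-4g$ into $F_s=n+1-\delta$, and that the bijection of Remark~\ref{rk:bij_4regular} preserves precisely the two parameters being tracked (the vertex count and the straight-face/knot-component count). The genuine content—that almost every degree-$\delta$ MO-graph is planar—is supplied by Corollary~\ref{coro:almost_sure} and is not re-derived here.
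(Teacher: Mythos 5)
Your proof is correct and follows essentially the same route as the paper, which deduces the proposition by combining the planar dictionary of Remarks~\ref{rk:bij_4regular} and~\ref{rk:doodles} (knot-components $=$ straight faces, with $F_s=n+1-\delta$ from Lemma~\ref{lem:straight_faces} at genus $0$) with Proposition~\ref{prop:asympt} and Corollary~\ref{coro:almost_sure}. The only difference is that you spell out the bookkeeping that the paper leaves implicit in the sentence preceding the proposition.
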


\section{Concluding remarks and perspectives}

In this article we have shown that, similarly as in the colored model~\cite{GS},
 the combinatorial study of  MO-graphs can be done by extraction of so-called \emph{schemes},
such that there are \emph{finitely many} schemes in each fixed degree $\delta\in\tfrac12\mathbb{Z}_+$. We have also identified the dominant schemes
in each degree, whose shapes are naturally associated to rooted binary trees. 
Having determined the dominant schemes we have obtained 
 an explicit asymptotic estimate
for the number of (rooted) MO-graphs of fixed degree as the number of vertices tends to infinity.

As already mentioned in the introduction, a first perspective for future work is the implementation of the double scaling limit for the MO tensor model. For the sake of completeness, let us mention that the double scaling mechanism was implemented, for a different type of colored model, in \cite{DGR}, using quantum field theoretical-inspired methods (the so-called intermediate field method). From a probabilistic point a view, a different perspective for future work is to investigate whether the new dominant schemes we have exhibited in this paper correspond or not to phases different from the one of branched polymers (it was recently proved that the melon graphs correspond, from this point of view, to branched polymers \cite{GR2}). 
Another appealing perspective is to extend the MO model (currently only developed in dimension $D=3$)  and results of this paper to higher dimensions. Finally, one can address the issue of proving counting theorems for 3D maps using tensor integral techniques, generalizing the fact that counting theorems for maps can be obtained using matrix integral techniques.

\section*{Acknowledgements}
 
The authors acknowledge R\u azvan Gur\u au and Gilles Schaeffer for very stimulating and instructive discussions.
Adrian Tanasa is partially supported by the grants ANR JCJC ``CombPhysMat2Tens" and PN 09 37 01 02. \'Eric Fusy is partially supported by the ANR grant 
``Cartaplus'' 12-JS02-001-01, and the ANR grant ``EGOS'' 12-JS02-002-01. The authors also acknowledge the Erwin Schr\"odinger International Institute for the valuable work environment provided to them during the ``Combinatorics, Geometry and Physics" programme.

\bigskip

\noindent \'Eric Fusy\\
{\it\small  LIX, CNRS UMR 7161, \'Ecole Polytechnique, 91120 Palaiseau, France, EU}

\medskip

\noindent
Adrian Tanasa\\
{\it\small LIPN, Institut Galil\'ee, CNRS UMR 7030, 
Universit\'e Paris 13, Sorbonne Paris Cit\'e,}\\{\it\small 99 av. Clement, 93430 Villetaneuse, France, EU}\\
{\it\small Horia Hulubei National Institute for Physics and Nuclear Engineering,
P.O.B. MG-6, 077125 Magurele, Romania, EU}
\end{document}